\documentclass[12pt,english]{amsart}
\usepackage{amsmath,amssymb}
\usepackage{bm,color,hyperref}
\usepackage{graphicx} % Required for inserting images
\usepackage{tikz}
    \usetikzlibrary{cd}
\usepackage{standalone}
\usepackage{enumitem} %Required to change enumeration labels

\newtheorem{theorem}{Theorem}[section]
\newtheorem*{theorem*}{Theorem}
\newtheorem{corollary}[theorem]{Corollary}
\newtheorem{lemma}{Lemma}[section]
\newtheorem*{lemma*}{Lemma}
\newtheorem{proposition}{Proposition}[section]
\newtheorem*{proposition*}{Proposition}

\theoremstyle{definition}
\newtheorem{remark}{Remark}[section]
\newtheorem{definition}{Definition}
\newtheorem*{definition*}{Definition}

\newcommand{\HALT}{\end{document}}

\newcommand{\I}{\mathrm{i}}

\newcommand{\conv}{\operatorname{conv}}
\newcommand{\ext}{\operatorname{ext}}

\let\emptyset\varnothing    %%with a nicer one.

\title{Sofic systems and extreme points of self-affine limit sets}

\begin{author}[S. Silvestri]{Stefano Silvestri}
\address{ %
Rome\\
Italy}
\email{silvestri.dev@gmail.com}
\end{author}
\begin{author}[R. A. P\'erez]{Rodrigo A. P\'erez}
\address{ %
IU Indianapolis, Dept. of Math. Sciences\\
402 N Blackford Street\\
Indianapolis, Indiana 46202\\
 United States }
\email{rodperez@iu.edu}
\end{author}

\begin{document}
\begin{abstract}
    Given any iterated function system (IFS) on the plane whose affine transformations share the same contracting matrix $T$, we characterize the set of extreme points of the attractor $A$ (along the convex hull $\conv(A)$) in terms of a directed graph on the edges of $\conv(A)$. This graph corresponds to a sofic system on the coding space associated to the IFS. When $T$ is proper orthogonal or purely real, we give conditions for $\conv(A)$ to be a finite polygon and, under additional symmetry of the transformations, we give conditions for $A$ itself to coincide with $\conv(A)$. 
\end{abstract}
\keywords{Iterated Function Systems, Sofic system, Extreme points, Convex hull, Self-Affine Sets, Directed Graph Iterated Function Systems}
\maketitle

\section{Introduction}
\label{sect:intro}
Consider a finite collection of $n \geq 2$ plane transformations 
\[F_j(\vec{x})=T\vec{x}+\vec{v}_j \quad (0 \leq j \leq n-1),\]
where $T$ a non-singular $2 \times 2$ real matrix, and all $\vec{v}_j \in \mathbb{R}^2$. Since the matrix $T$ is common to all maps, the collection $\{F_j\}$ is \emph{homogeneous}. We will consider the separate cases of $T$ being proper orthogonal (equivalent to complex multiplication), diagonalizable, and skew.

We require that each $F_j$ is a contraction which means that the singular values of $T$ are all less than $1$.
Then, $\{F_j\}$ constitutes an \emph{Iterated Function System} (IFS for short; see general definition in Section~\ref{sect:background}). The classical theorem of Hutchinson states that there exists a unique non-empty compact set $A$, known as the \emph{limit} or \emph{attractor set}, that satisfies
\[A = F_0(A) \cup F_1(A) \cup \ldots \cup F_{n-1}(A).\]

We are concerned in describing the \emph{extreme points} of $A$, that is, those points in $A$ that lie on the boundary of its convex hull. To give a brief overview of our results and to better understand how they fit in the literature, consider the following simple example: Figure \ref{fig:IntroExample} depicts the famous Twin Dragon fractal, whose IFS consists only of the two maps $F_0$ and $F_1$ that rotate the plane $45^{\circ}$ and scale it by $1/2$, and then translate left or right by one unit. The convex hull is an octagon and, accordingly, the extreme points encircle $A$ in a chain of eight straight Cantor sets. Under the two maps $F_0$ and $F_1$, these Cantors map onto each other in a somewhat tangled fashion. In particular, three iterations of $F_1$ map the top edge of the convex hull to the bottom-left edge. This shorter segment then covers two halves of the bottom Cantor set after one more iteration of either $F_0$ or $F_1$. A symmetric chain of iterations describes the top Cantor set as two copies of the bottom. In this situation, neither top nor bottom nor any other Cantor sets is described in terms of itself, but rather in terms of each other. Such behavior constitutes an example of a \emph{Directed Graph Iterated Function System} (DGIFS for short).
\begin{figure}
    \centering
    \includegraphics[width=0.5\linewidth]{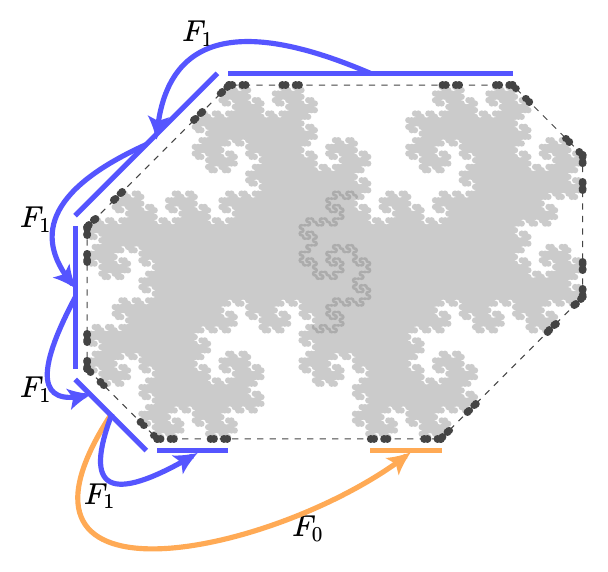}
    \caption{On every face of the convex hull, the extreme points of the Twin Dragon form a Cantor set. Some images of these under $F_0$ or $F_1$ \emph{remain} on the boundary, and the picture demonstrates a chain of such maps that renders the bottom Cantor set as two iterated copies of the top. A symmetric chain renders the top Cantor set as two iterated copies of the bottom, but those arrows are not displayed for the sake of visual economy.}
    \label{fig:IntroExample}
\end{figure}

Describing extreme points in the attractor of IFS is not a new objective. Vass \cite{V18} offers compelling motivation for studying convex hulls of attractors of IFS, and provides an algorithm to construct them: an initial special finite collection of points in the attractor is iterated and, at each stage, tested for keeping or discarding. His method applies to homogeneous and non-homogeneous maps, but only with proper orthogonal matrices; thus, there is partial overlap with our work in Section~\ref{sect:complex}, but whereas Vass' construction is procedural, ours produces a concrete combinatorial description. On the same class of IFS, Tetenov and Davydkin \cite{TD02, DT05} can guardantee that for non-homogeneous IFS with proper orthogonal matrices, the convex hull of the attractor is a finite sided polygon if the unitary parts of the matrices form a finite group, i.e. if the unitary parts are rotation by a rational multiple of $2\pi$. However, they do not provide an exact number of sides of the convex hull nor an explicit description of which points of the attractor are extreme. In a different direction, a non-algorithmic characterization of extreme points has already been found \cite{HS16,HS17,CW19,R25} in some of the cases encompassed by our more general theory.

Our contribution is to characterize the set of extreme points for \emph{any} homogeneous IFS as the unique set of attractors of a DGIFS. For well-known and well-studied IFS the DGIFS is explicitly given; for arbitrary homogeoneous IFS the system of equations follow a specific pattern in terms of the transformations $\{F_j\}$ but it has to be recovered case by case. Our approach in the proofs is geometrical and independent in method from those in the cited articles, though in some cases there is overlap in the underlying ideas.

We open the article with a brief review of background information in Section \ref{sect:background}. It sets terminology, and introduces a new object essential to the proofs. Then, Section \ref{sect:complex} deals with the self-similar IFS, i.e. the case when the eigenvalues of $T$, the linear part of each affine map, are complex. Section \ref{sect:real} and \ref{sect:jordan} focus on the self-affine IFS, i.e. when the eigenvalues of $T$ are real. Specifically, section \ref{sect:real} considers the case of eigenvalues with two linearly independent eigenvectors; and Section \ref{sect:jordan} covers equal eigenvalues with only one eigendirection. Each of the main sections is accompanied by illustrative examples. Finally, Section \ref{sect:future} concludes with a discussion of how the results may extend to the non-homogeneous setting, pointing to future directions.

\section{Background}
\label{sect:background}
\subsection{Affine transformations}
For an affine transformation of the plane $T\vec{x}+\vec{v}$, the matrix $T$, can be characterized using the Jordan canonical form: let $P$ be the matrix whose columns are the eigenvectors and let $J$ be the Jordan matrix, then \[P^{-1}TP=J.\]
If the eigenvalues of $T$ are
\begin{enumerate}
    \item complex $a\pm b\I$, then $J=\begin{pmatrix}a&-b\\b&a\end{pmatrix}$ or $J=\begin{pmatrix}a&b\\-b&a\end{pmatrix}$
    \item real $\mu\neq\eta$, then $J=\begin{pmatrix}\mu&0\\0&\eta\end{pmatrix}$
    \item equal $\mu=\eta$, then $J=\begin{pmatrix}\eta&1\\0&\eta\end{pmatrix}$
\end{enumerate}
Therefore we can focus on affine transformations whose matrix is already in Jordan form. 
We will call the first of these the \emph{complex case}, the second the \emph{real case}, and the last the \emph{Jordan block case} following \cite{HS16,HS17} .

\subsection{Iterated Function Systems}
Let $X=\mathbb{R}^2$ or $X=\mathbb{C}$ and consider the complete metric space $(X,d)$. A map $f: X\to X$ is called a \emph{contraction} with factor $r<1$ if $d(f(x),f(y))\leq r d(x,y)$ for any $x,y\in X$.

A famous theorem of Hutchinson \cite{H81} states that given a collection of contractions $\{f_j\}_{j=0}^{n-1}$ on $(X,d)$, then there exists a unique non-empty compact set, called a \emph{limit} or \emph{attractor set}, that satisfies \[A=\bigcup_{0\leq j< n}f_j(A).\]
The set of functions $\{f_j\}_{j=0}^{n-1}$ is called an \emph{Iterated Function System} (IFS).

More can be said about the limit set: $A$ is the closure of the set of fixed points of every finite composition of the maps $f_j$. In particular, Hutchinson proved the following: denote by $\Sigma=\{0,1,\cdots,n-1\}^\mathbb{N}$ the space of sequences on $n$-symbols and define the left shift $\tau_j:\Sigma\to\Sigma$ with $\tau_j(w_0w_1w_2\cdots)=jw_0w_1w_2\cdots$, then
\begin{theorem}
\label{thm:CodeSpaceProjection}
    Given the IFS of contractions $\{f_j\}_{j=0}^{n-1}$, then there is a continuous map $\pi:\Sigma\to X$ with $f_j\circ\pi=\pi\circ\tau_j$ for $j=0,\ldots,n-1$ and $\pi(\Sigma)$ is the attractor $A$ of the IFS.
\end{theorem}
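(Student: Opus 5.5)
We define $\pi$ as a limit of finite compositions and then verify continuity, the conjugacy relation, and $\pi(\Sigma)=A$ in turn. Let $r<1$ be a common contraction factor for all $f_j$; with finitely many maps, take $r=\max_j r_j$. Fix any point $x_0\in X$. For a word $w=w_0w_1w_2\cdots\in\Sigma$ and $k\ge 1$, set
\[
\pi_k(w)=f_{w_0}\circ f_{w_1}\circ\cdots\circ f_{w_{k-1}}(x_0).
\]
Let $M=\max_j d(f_j(x_0),x_0)$. Then
\[
d(\pi_{k+1}(w),\pi_k(w))\le r^k\, d(f_{w_k}(x_0),x_0)\le r^k M,
\]
so $(\pi_k(w))_k$ is Cauchy, uniformly in $w$. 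Since $X$ is complete, it converges, and we define $\pi(w)=\lim_k\pi_k(w)$. The geometric tail bound shows that the convergence is uniform, and that the limit does not depend on $x_0$, because changing $x_0$ perturbs $\pi_k(w)$ by at most $r^k d(x_0,x_0')$.

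\textbf{Continuity.} Give $\Sigma$ the product topology, metrized for instance by $\rho(w,w')=2^{-\min\{i:\,w_i\neq w'_i\}}$. Each $\pi_k$ depends only on the first $k$ symbols, so it is locally constant and hence continuous. A uniform limit of continuous maps is continuous, so $\pi$ is continuous. Alternatively, one can argue directly: if $w$ and $w'$ agree on their first $k$ symbols, then $d(\pi(w),\pi(w'))\le r^k\operatorname{diam}(\text{something bounded})$. This follows from the tail estimate $d(\pi(w),\pi_k(w))\le Mr^k/(1-r)$, which gives $d(\pi(w),\pi(w'))\le 2Mr^k/(1-r)$.

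\textbf{Conjugacy.} By construction, $\pi_{k+1}(\tau_j w)=f_j(\pi_k(w))$. Letting $k\to\infty$ and using the continuity of $f_j$, which holds because it is a contraction, we get $\pi(\tau_j w)=f_j(\pi(w))$.

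\textbf{Image equals the attractor.} $\Sigma$ is compact by Tychonoff and $\pi$ is continuous, so $B:=\pi(\Sigma)$ is compact and nonempty. Since $\Sigma=\bigcup_j\tau_j(\Sigma)$, the conjugacy gives
\[
B=\bigcup_j \pi(\tau_j\Sigma)=\bigcup_j f_j(B).
\]
By the uniqueness part of Hutchinson's theorem, quoted above, $B=A$. If one prefers not to assume uniqueness, there is a direct route. Take $x_0\in A$; then every $\pi_k(w)\in A$ by invariance, and $A$ is closed, so $B\subseteq A$. Conversely, given $a\in A$, invariance lets us repeatedly choose $w_0,w_1,\dots$ with $a\in f_{w_0}\cdots f_{w_{k-1}}(A)$, a set of diameter at most $r^k\operatorname{diam}A$. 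This forces $a=\pi(w)$.

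\textbf{Main obstacle.} The only real care is needed in the uniform Cauchy estimate. It is what gives continuity and independence from $x_0$ at once, and it requires a single contraction ratio $r$ valid for all maps, which holds because the family is finite.
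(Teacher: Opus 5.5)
Your proof is correct. The uniform Cauchy estimate gives existence, base-point independence and continuity of $\pi$ at once; the conjugacy follows by passing to the limit in $\pi_{k+1}(\tau_j w)=f_j(\pi_k(w))$; and $\pi(\Sigma)=A$ follows from invariance plus uniqueness, or from your direct diameter argument, which itself yields uniqueness. The paper gives no proof and simply cites Hutchinson, and your construction of $\pi(w)$ as the $x_0$-independent limit of $f_{w_0}\circ\cdots\circ f_{w_{k-1}}(x_0)$ is the standard argument behind that citation and matches the description the paper records right after the statement.
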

As a consequence, for any infinite sequence $\omega=w_0w_1\cdots$ with $w_j\in\{0,1,\cdots,n-1\}$, the projection $\pi(\omega)$ is a point in the limit set which corresponds to the composition $f_\omega(x)=\lim_{k\to\infty}(f_{w_0}\circ \cdots \circ f_{w_k})(x)$ where the choice of $x$ does not matter.

A \emph{Directed Graph Iterated Function System} (DGIFS), sometimes called Graph Directed IFS or Graph IFS or Recurrent IFS, is a generalization of an IFS. They were formulated in full generality in \cite{MW88}, but earlier versions with more restrictive assumptions appeared in \cite{D82, G87, B89a, B89b}.

A \emph{directed graph} $G:=(V,E)$, consists of a finite set $V$ of vertices and a finite set $E$ of directed edges with loops and multiple edges allowed. Let $E_{uv}\subset E$ be the set of edges from the vertex $u$ to the vertex $v$.

A DGIFS on $X$ is a finite collection of distinct contracting functions $\{f_e:X_u\to X_v~|~e\in E,~u,v\in V\}$, where $X_u$ is a copy of $X$ associated to the vertex $u$. It is further required that the out degree of every vertex $u\in V$ to be greater or equal to $1$. For such a DGIFS there exists \cite[Theorem 4.3.5]{Edgar1990} a unique list of nonempty compact sets $\{B_u\subset X_u\}_{u\in V}$ such that $$B_u = \bigcup_{v\in V}\bigcup_{e\in E_{uv}} f_e(B_v),\quad \forall u\in V.$$ Each compact set $B_u$ is called a DG-attractor and $\{B_u\}_{u\in V}$ is called the list (or vector) of attractors of the DGIFS. We remark that each $B_u$ lives in its own copy $X_u$ of $X$, however in this article we picture them as sets in the same space $X$.

The standard IFS corresponds to the special case of a DGIFS with only a single vertex with as many self-loops as there are functions defining the IFS.

Observe that the defining equation of the limit set of an IFS $A=\bigcup_{k=0}^{n-1} f_k(A)$ can also be rewritten as a system of $n$ equations: if we denote by $A_i=f_i(A)$ then 
\[A_i=f_i(A_0)\cup f_i(A_1)\cup\cdots\cup f_i(A_{n-1}),\qquad i=0,\ldots, n-1.\]
This is an example of a DGIFS whose associated graph has $n$ vertices. Bandt in \cite{B89a} considered the above system of equations by deleting some terms on the right hand side of some or all equations. The resulting DG-attractors are then subsets of $A$. Bandt has remarked that such a construction is equivalent to a subshift of finite type on the space of sequences in $n$-symbols, $\Sigma=\{0,1,\cdots,n-1\}^\mathbb{N}$. Recall that Theorem \ref{thm:CodeSpaceProjection} provides a way to represents points in the attractor with infinite sequences from the code space $\Sigma$. Thus suppose $S\subset\Sigma$ is a set characterized by a finite rule that determines the sequences belonging to it. If such set is closed and invariant under $\tau_i$ for some $i$ then it is called a \emph{subshift of finite type}, abbreviated as SFT.

In \cite{B89b}, instead, the focus was in the more general case of having a DGIFS with $m$ equations where $m$ could be larger or smaller than the number of maps defining the IFS. The resulting DG-attractors are again subsets of $A$, but now such construction is equivalent to a sofic system.

A \emph{sofic system} is a generalization of a SFT. In particular, a sofic system is characterized by a finite number of distinct sequences $\nu$ that can follow any other finite sequence $\omega=w_0w_1\cdots w_j$.

\subsection{Convex hull and extreme points}
A set $S\subseteq\mathbb{R}^2$ is \emph{affine} if for any two elements of $S$ the line through them is contained in $S$. A set $C\subseteq\mathbb{R}^2$ is \emph{convex} if for any two elements $\vec{x},\vec{y}\in C$ the line segment $[\vec{x},\vec{y}]=\{(1-t)\vec{x}+t\vec{y}~:~t\in[0,1]\}$ is contained in $C$. The \emph{convex hull} of an arbitrary set $S$, denoted by $\conv(S)$, is the smallest convex set containing $S$. 

A point $\vec{p}\in\mathbb{R}^2$ is an \emph{extreme point} of $S$ if $\vec{p}\in\conv(S)$  and there do not exist distinct $\vec{x},\vec{y}\in\conv(S)$ and $tt\in(0,1)$ with $\vec{p}=t\vec{x}+(1-t)\vec{y}$. Write $\ext(S)$ for the set of extreme points of $S$. 
Following the usage in~\cite{V18,CW19}, we additionally allow ``extreme point'' to include points $S\cap\partial\conv(S)$; we reserve the term \emph{corner point} for a genuine vertex.

The behavior of convex sets and extreme points is characterized by the following:
\begin{lemma}[\cite{B92}]
\label{lem:extreme}
Let $A,C_1,\ldots,C_n$ be susbsets of $\mathbb{R}^2$ with $A$ compact and let $F$ be an affine transformation. Then
    \begin{itemize}
        \item 
        $\bigcup_{i=1}^n \conv(C_i)\subseteq \conv\left(\bigcup_{i=1}^n C_i\right)$
        \item $\ext\left(\bigcup_{i=1}^n C_i\right)\subseteq \bigcup_{i=1}^n \ext(C_i)$
        \item $\conv(F(A))=F(\conv(A))$
        \item $\ext(F(A))\subseteq F(\ext(A))$
    \end{itemize}
\end{lemma}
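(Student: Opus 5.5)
We prove the four items in order, using only the definitions of convex hull and extreme point together with the fact that an affine map $F(\vec{x})=T\vec{x}+\vec{v}$ preserves convex combinations: $F(t\vec{x}+(1-t)\vec{y})=tF(\vec{x})+(1-t)F(\vec{y})$. Throughout we use the classical description of $\conv(S)$ as the set of all finite convex combinations of points of $S$.

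For the first item, each $C_i$ is contained in $\bigcup_j C_j$, hence in the convex set $\conv(\bigcup_j C_j)$. Minimality of the convex hull then gives $\conv(C_i)\subseteq\conv(\bigcup_j C_j)$, and we take the union over $i$.

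For the third item, $F$ maps convex combinations to convex combinations, so $F(\conv(A))$ is convex and contains $F(A)$. Hence $\conv(F(A))\subseteq F(\conv(A))$. Conversely, a point of $F(\conv(A))$ has the form $F(\sum t_k\vec{a}_k)=\sum t_kF(\vec{a}_k)\in\conv(F(A))$. Invertibility of $T$ is not even needed here.

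For the second item, let $\vec{p}\in\ext(\bigcup C_i)$. By definition $\vec{p}$ lies in $\bigcup C_i$ (or at least in its hull), so we must first locate $\vec{p}$ in some $C_{i_0}$.
\begin{itemize}
\item If $\vec{p}$ is a genuine extreme point of $\conv(\bigcup C_i)$, write it as a convex combination of points of $\bigcup C_i$; extremality forces all terms with positive weight to equal $\vec{p}$, so $\vec{p}\in C_{i_0}$ for some $i_0$.
\item Next, suppose $\vec{p}=t\vec{x}+(1-t)\vec{y}$ with distinct $\vec{x},\vec{y}\in\conv(C_{i_0})$. By the first item these points also lie in $\conv(\bigcup C_i)$, contradicting extremality. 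Hence $\vec{p}\in\ext(C_{i_0})$.
\item Under the extended convention, where $\vec{p}\in S\cap\partial\conv(S)$, the argument is similar. A supporting line $\ell$ of $\conv(\bigcup C_i)$ through $\vec{p}$ has all of $\conv(C_{i_0})$ on one side and passes through $\vec{p}\in C_{i_0}$, so $\vec{p}\in\partial\conv(C_{i_0})$.
\end{itemize}

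For the fourth item, apply the third item: $\conv(F(A))=F(\conv(A))$. Since $T$ is non-singular, $F$ is a bijection preserving segments, so $F^{-1}$ is also affine. If $\vec{q}=F(\vec{p})\in\ext(F(A))$ were a nontrivial convex combination of distinct points of $F(\conv(A))$, applying $F^{-1}$ would exhibit $\vec{p}$ as a nontrivial combination of distinct points of $\conv(A)$. The same transport works for boundary points, because $F$ is a homeomorphism carrying $\partial\conv(A)$ to $\partial\conv(F(A))$. Compactness of $A$ guarantees that $\conv(A)$ is compact, so that boundary and supporting-line arguments apply.

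The main obstacle is the bookkeeping in the second item: we must show that the extreme point actually belongs to one of the $C_i$ (rather than merely to the hull), under both the strict and the extended notion of extreme point. We handle this via the convex-combination and supporting-line arguments above.
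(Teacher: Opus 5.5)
Your first and third items are correct, and so is the second item under both the strict and the extended convention. The problem is in the fourth item, where the implication runs the wrong way. You take $\vec{q}=F(\vec{p})\in\ext(F(A))$ and argue that if $\vec{q}$ were a nontrivial convex combination of distinct points of $F(\conv(A))$, then applying $F^{-1}$ would make $\vec{p}$ such a combination in $\conv(A)$. That is the contrapositive of ``$\vec{p}$ extreme $\Rightarrow$ $\vec{q}$ extreme'', so it proves the reverse inclusion $F(\ext(A))\subseteq\ext(F(A))$. Read as a deduction from $\vec{q}\in\ext(F(A))$, it starts from a contradictory hypothesis and yields nothing about $\vec{p}$.

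The step you need moves a decomposition of $\vec{p}$ forward. Put $\vec{p}=F^{-1}(\vec{q})\in\conv(A)$, using the third item, and suppose $\vec{p}=t\vec{x}+(1-t)\vec{y}$ with distinct $\vec{x},\vec{y}\in\conv(A)$ and $t\in(0,1)$. Applying $F$ gives $\vec{q}=tF(\vec{x})+(1-t)F(\vec{y})$. Here $F(\vec{x})\neq F(\vec{y})$ by injectivity, and both points lie in $F(\conv(A))=\conv(F(A))$, contradicting extremality of $\vec{q}$. Equivalently, run your own argument with $F^{-1}$ and $F(A)$ in place of $F$ and $A$. Your boundary sentence is fine, since a homeomorphism carries boundaries in both directions. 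Combined with the facts that a strict extreme point of $\conv(S)$ lies in $S$ (your second item) and is not interior to $\conv(S)$, it already settles the fourth item under the paper's extended convention, where $\ext(S)=S\cap\partial\conv(S)$.

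Two side remarks. First, the paper gives no proof of this lemma (it is quoted from Berger), so there is no route to compare with. Second, your appeal to non-singularity of $T$ is legitimate, since the paper assumes it throughout. Under the extended convention it is in fact necessary: for $A=\{(0,0),(\pm1,\pm1)\}$ and $F(x,y)=(x,0)$, the point $(0,0)$ lies in $F(A)\cap\partial\conv(F(A))$ but not in $F(\ext(A))$. Relatedly, for invertible $F$ the compactness of $A$ does no work in your proof. Compactness is what is needed for a non-invertible $F$ in the strict sense, where one picks an extreme point of the compact face $F^{-1}(\vec{q})\cap\conv(A)$ of $\conv(A)$.
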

\begin{theorem}[\cite{B92}]
\label{thm:extreme}
    Suppose $A\subseteq\mathbb{R}^2$ satisfies $A=\bigcup_{i=1}^nF_i(A)$ where $F_1,\ldots,F_n$ are affine transformations. Then
    \begin{itemize}
        \item $\bigcup_{i=1}^nF_i(\conv(A))\subseteq \conv(A)$
        \item $\ext(A)\subseteq \bigcup_{i=1}^nF_i(\ext(A))$
    \end{itemize}
\end{theorem}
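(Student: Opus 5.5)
We derive both items of Theorem~\ref{thm:extreme} directly from Lemma~\ref{lem:extreme} and the self-similarity $A=\bigcup_{i=1}^n F_i(A)$. No limiting argument or contraction property is needed; only the affine behaviour of convex hulls and extreme points enters.

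For the first item, fix $i$. Since $F_i(A)\subseteq A$, monotonicity of the convex hull gives $\conv(F_i(A))\subseteq\conv(A)$. The third bullet of Lemma~\ref{lem:extreme} turns the left side into $F_i(\conv(A))$, so $F_i(\conv(A))\subseteq\conv(A)$. Taking the union over $i$ yields $\bigcup_{i=1}^n F_i(\conv(A))\subseteq\conv(A)$. Equivalently, one may set $C_i=F_i(A)$ in the first bullet of Lemma~\ref{lem:extreme}, use $\bigcup_i C_i=A$, and then rewrite each $\conv(C_i)$ with the third bullet.

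For the second item, again put $C_i=F_i(A)$, so that $\bigcup_i C_i=A$. The second bullet of Lemma~\ref{lem:extreme} gives
\[
\ext(A)=\ext\Bigl(\bigcup_{i=1}^n C_i\Bigr)\subseteq\bigcup_{i=1}^n\ext(F_i(A)).
\]
The fourth bullet applied to each $F_i$ gives $\ext(F_i(A))\subseteq F_i(\ext(A))$. Chaining the two inclusions gives $\ext(A)\subseteq\bigcup_{i=1}^n F_i(\ext(A))$.

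The only point needing care is the fourth bullet when $F_i$ is singular, and that bullet is already given by Lemma~\ref{lem:extreme}. In our setting the $F_i$ are invertible anyway, so $F_i$ carries $\conv(A)$ bijectively onto $\conv(F_i(A))$ and preserves extremality in both directions. Compactness of $A$, which the lemma assumes, is guaranteed here by Hutchinson's theorem.
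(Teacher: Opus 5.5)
The paper gives no proof of its own here; the result is quoted from Berger. Your derivation is the one the placement right after Lemma~\ref{lem:extreme} indicates: $F_i(A)\subseteq A$ together with $\conv(F_i(A))=F_i(\conv(A))$ for the first item, and the chain $\ext(A)\subseteq\bigcup_i\ext(F_i(A))\subseteq\bigcup_iF_i(\ext(A))$ for the second. The first item and the first inclusion of the chain hold for arbitrary sets and arbitrary affine maps.

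What needs correcting is your final paragraph. Compactness of $A$ does not follow from $A=\bigcup_iF_i(A)$, and Hutchinson's theorem does not supply it. The statement assumes no contractivity, and even for contractions Hutchinson gives uniqueness only among nonempty compact sets; for instance, $[0,1)\times\{0\}$ is invariant under $\vec x\mapsto\vec x/2$ and $\vec x\mapsto\vec x/2+(1/2,0)$.

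This is not a technicality. The fourth bullet of Lemma~\ref{lem:extreme} genuinely needs compactness when a map is singular, and without compactness or invertibility the second item is false. Take $A=\{(0,1)\}\cup\bigl((-1,1)\times\{0\}\bigr)$, $F_1(x,y)=(x,0)$ and $F_2(x,y)=(0,1-y)$. Then $F_1(A)\cup F_2(A)=A$, and the only extreme point of $\conv(A)$ is $(0,1)$, yet $F_1(0,1)=F_2(0,1)=(0,0)$.

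So you must add a hypothesis explicitly. One option is to assume $A$ compact and cite the lemma; in the paper this holds because $A$ is by definition the attractor, not because of the functional equation. The other is to assume all $F_i$ invertible. Then your own observation applies: an affine bijection carries $\conv(A)$ onto $\conv(F_i(A))$ and preserves nontrivial convex combinations of distinct points in both directions. This gives $\ext(F_i(A))=F_i(\ext(A))$ with no compactness needed, and it already covers the paper's setting, where $T$ is non-singular.
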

A consequence of the above theorem is that every extreme point of the attractor $A$ is necessarily the image of some other extreme point for some affine map $F_i$.

\subsection{Support function and bounding lines}
Given $d\in \mathbb{R}$ and a non-zero vector $\vec{b}\in \mathbb{R}^2$ then the line in the plane whose normal direction is $\vec{b}$ can be defined as \[ \mathcal{L}=\left\{\vec{x}\in\mathbb{R}^2~:~\langle \vec{b},\vec{x}\rangle=\vec{b}^{\mathrm T}\vec{x}=d\right\}.\] 

The \emph{support function} of a convex set $K\subset\mathbb{R}^2$ is defined by \[h_K(\vec{x})=\sup\{\langle \vec{x},\vec{y}\rangle~|~\vec{y}\in K\}.\] Such function is a convex function, since $h_K(\vec{x})$ is the pointwise supremum of convex functions: for fixed $\vec{x}$ the dot product $\langle \vec{x},\vec{y}\rangle$ is a convex function of $\vec{y}$. Moreover, it is positive homogeneous of degree $1$ since $h_K(k\vec{x})=kh_K(\vec{x})$ for any scalar $k\geq0$. Thanks to this property, it is possible to characterize $h_K$ by its values on the unit sphere $\mathbb{S}^{1}\subset\mathbb{R}^2$. 

The support function is naturally related to the geometric notion of ``tangency''. A \emph{supporting half-plane} to a closed convex set $K$ is a closed half-plane which contains $K$ and has a point of $K$ in its boundary. A \emph{supporting line} to a closed convex set $K$ is a line which is the boundary of a supporting half-plane. Thus, a supporting line to $K$ is associated with a dot product which achieves its maximum on $K$: for $\theta\in\mathbb{S}^1$ the supporting line to $K$ with normal $\theta$ is defined
\[\mathcal{L}(\theta):=\left\{\vec{y}\in\mathbb{R}^2~:~\langle \theta,\vec{y}\rangle=h_K(\theta)\right\}.\]
Geometrically, assuming $\vec{0}\in K$, $h_K(\theta)$ represents the distance from the origin to the supporting line $\mathcal{L}(\theta)$ to $K$ in the direction of $\theta\in\mathbb{S}^{1}$. 

Let us identify the unit circle $\mathbb{S}^1$ with the interval $[0,2\pi]$. Let $A$ be the limit set of the IFS generated by $n\geq2$ affine transformations $F_k(\vec{x})=L_k\vec{x}+\vec{v}_k$, where $L_k$ are $2\times2$ non-singular real matrix for every $0\leq k<n$. From the above discussion there exists at least one supporting line $\ell$ to $A$ at each extreme point, since $\ext(A)\subset A\subset\overline{\conv(A)}$. 

Since $A$ satisfies $A=\bigcup_{k=0}^{n-1}F_k(A)$, we claim, in fact, that there is at least one supporting line to $A$, that intersects both $F_k(A)$ and $F_j(A)$ for some $0\leq k\neq j<n$.
Given that the limit set $A$ is a union of compact sets, the support function to $A$ in a given direction is equivalent to the maximum between the support function to each set in the union: for $\theta\in [0,2\pi]$ \[h_A(\theta)=\sup_{\vec{x}\in A}\langle\theta,\vec{x}\rangle=\max_{0\leq k<n}h_{F_k(A)}(\theta)=\max_{0\leq k<n}\sup\left\{\langle \theta,\vec{x}\rangle~:~\vec{x}\in F_k(A)\right\}.\] Observe that by definition we can write
\begin{align*}
h_{F_k(A)}(\theta) &=& \sup\left\{\langle \theta,\vec{x}\rangle~:~\vec{x}\in F_k(A)\right\}= \sup\left\{\langle \theta,F_k(\vec{x})\rangle~:~\vec{x}\in A\right\}\\
&=& \sup\left\{\langle \theta,L_k\vec{x}\rangle~:~\vec{x}\in A\right\}+\langle \theta,\vec{v}_k\rangle\\
&=& \sup\left\{\langle L_k^{\mathrm T}\theta,\vec{x}\rangle~:~\vec{x}\in A\right\}+\langle \theta,\vec{v}_k\rangle= h_A(L_k^{\mathrm T} \theta) + \langle \theta, \vec{v}_k \rangle.
\end{align*}

\begin{definition}
    Let $A$ be the attractor of an IFS generated by $n$ affine contractions $\{F_k(\vec{x}) = L_k \vec{x} + \vec{v}_k\}_{k=0}^{n-1}$ on $\mathbb{R}^2$. For $\theta \in [0,2\pi]$, define the \emph{dominance region} of the $k$th map as
    \[D_k := \left\{\theta \in [0,2\pi]~:~h_{F_k(A)}(\theta) = h_A(\theta)\right\}.\]
    A supporting line to $A$ with outward normal $\theta_* \in D_k \cap D_j$ for some $k \neq j$ is called a \emph{bounding line}.
\end{definition}

\begin{proposition}
\label{prop:boundinglines}
    Let $A$ be the attractor of an IFS generated by $n \geq 2$ affine contractions $\{F_k(\vec{x}) = L_k \vec{x} + \vec{v}_k\}_{k=0}^{n-1}$ on $\mathbb{R}^2$. Then the following hold.
    \begin{enumerate}
        \item[(i)] There exist at least two bounding lines.     
        \item[(ii)] For distinct matrices $L_k$, a bounding line to $A$ in the direction $\theta_* \in D_k \cap D_j$ is the solution to the equation
        \[h_A(L_k^{\mathrm T} \theta_*)+\langle \theta_*, \vec{v}_k \rangle=h_A(L_j^{\mathrm T} \theta_*)+\langle\theta_*,\vec{v}_j\rangle.\]
        \item[(iii)] If $L_k = L$ for all $k$, the dominance regions reduce to
        \[D_k = \left\{\theta \in [0,2\pi]~:~\langle\theta,\vec{v}_k\rangle\geq\langle\theta,\vec{v}_j\rangle,~\forall0\leq k\neq j<n\right\}.\]
        In this case, $D_k\cap D_j\neq\emptyset$ if and only if the segment $\vec{v}_j+t(\vec{v}_k-\vec{v}_j)$ for $t\in[0,1]$ is an edge of $\conv(\{\vec{v}_k\}_{k=0}^{n-1})$ and $\theta_*\in D_k\cap D_j$ is perpendicular to $(\vec{v}_j-\vec{v}_k)$.
    \end{enumerate}
\end{proposition}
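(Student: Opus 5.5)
The plan is to work throughout with the identity established just before the definition,
\[h_A(\theta)=\max_{0\le k<n}\bigl(h_A(L_k^{\mathrm T}\theta)+\langle\theta,\vec v_k\rangle\bigr),\]
together with continuity of each $h_{F_k(A)}$ in $\theta$. Each $h_{F_k(A)}$ is the support function of a compact set, hence Lipschitz on $\mathbb S^1$. It follows that each dominance region $D_k$ is closed, and the regions cover the circle: $\bigcup_k D_k=[0,2\pi]$.

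For (i), I would first show that not all of the circle belongs to a single $D_k$. If $D_k=[0,2\pi]$, then $h_{F_k(A)}=h_A$ everywhere, which gives $\conv(F_k(A))=\conv(A)$. By Lemma~\ref{lem:extreme}, $\conv(F_k(A))=F_k(\conv(A))$. But $F_k$ is a contraction and $\conv(A)$ is compact, so equality forces $\conv(A)$ to be a single point, namely the fixed point of $F_k$. In that case $A$ is a singleton and every $F_j$ shares that fixed point; this degenerate case I would either exclude or note trivially, since every direction is then in every $D_k$.

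Otherwise, at least two of the $D_k$ are nonempty proper closed subsets covering the connected circle. A circle cannot be a disjoint union of finitely many nonempty proper closed sets, so there is a boundary point of some $D_k$ lying in another $D_j$. To get \emph{two} bounding lines, I would take a nonempty proper $D_k$ and the union $U$ of the other regions. The set $D_k$ has at least one boundary point on the circle. If $D_k$ is an arc, it has two endpoints, each lying in some $D_j$ with $j\ne k$ by closedness of $U$. In general I would take both the counterclockwise and the clockwise endpoints of a component of $D_k$; these are distinct unless the component is the whole circle minus nothing, which is excluded. This topological bookkeeping is the step I expect to be fiddliest, especially when components are single points: then $\theta_*$ is one point but may lie in two different $D_j$. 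Even so, it still yields a bounding direction. To guarantee two distinct lines, I would apply the argument to both the leftmost and rightmost boundary of the union of regions adjacent to $D_k$, or, more simply, note that the complement of $D_k$ is open and nonempty, so its boundary contains at least two points unless the complement is the circle minus one point. In that last case, the other regions' closures contain that point as well as further contact points.

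For (ii), $\theta_*\in D_k\cap D_j$ means $h_{F_k(A)}(\theta_*)=h_A(\theta_*)=h_{F_j(A)}(\theta_*)$. Substituting the computed formula $h_{F_k(A)}(\theta)=h_A(L_k^{\mathrm T}\theta)+\langle\theta,\vec v_k\rangle$ gives the stated equation immediately, and the bounding line is $\{\vec y:\langle\theta_*,\vec y\rangle=h_A(\theta_*)\}$.

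For (iii), with $L_k=L$ the common term $h_A(L^{\mathrm T}\theta)$ cancels in the comparison. Hence $\theta\in D_k$ if and only if $\langle\theta,\vec v_k\rangle\ge\langle\theta,\vec v_j\rangle$ for all $j$, i.e.\ $\vec v_k$ maximizes the linear functional $\theta$ over $V=\{\vec v_j\}$. Then $\theta\in D_k\cap D_j$ means both $\vec v_k$ and $\vec v_j$ lie on the supporting line of $\conv(V)$ with normal $\theta$. That supporting line meets the polygon $\conv(V)$ in a face containing both points, so $\theta\perp(\vec v_j-\vec v_k)$ and $[\vec v_j,\vec v_k]$ lies in the boundary. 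Conversely, if the segment is an edge, its outward normal maximizes at both endpoints. One caveat needs care here: if several $\vec v$'s are collinear on one edge, the ``iff'' should be read as the segment lying on an edge. I would state the result with that interpretation.
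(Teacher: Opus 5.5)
Your arguments for (ii) and (iii) are the same as the paper's. You substitute $h_{F_k(A)}(\theta)=h_A(L_k^{\mathrm T}\theta)+\langle\theta,\vec v_k\rangle$, and you cancel the common term $h_A(L^{\mathrm T}\theta)$ when all $L_k=L$. Your caveat about collinear translations is correct and worth keeping. In the four-map example of Section~\ref{subsect:complexgeneralexamples}, $v_3$ lies inside the edge $[v_1,v_0]$, so $D_0\cap D_3\neq\emptyset$ even though $[v_0,v_3]$ is only part of an edge. The paper's graph (the arrow labelled $3$ out of $B_{0,0}$) uses exactly your reading.

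To show that no $D_k$ is all of $\mathbb{S}^1$, you take a different and shorter route. The paper iterates $h_A(\theta)=h_A(L_k^{\mathrm T}\theta)+\langle\theta,\vec v_k\rangle$ and sums the series to obtain the linear function $h_A(\theta)=\langle\theta,(I-L_k)^{-1}\vec v_k\rangle$. You instead note that equal support functions give $F_k(\conv(A))=\conv(A)$, which a strict contraction permits only for a point. Both reach the same conclusion, but yours avoids the limit computation. Unlike the paper, which simply calls the singleton ``impossible'', you also correctly observe that (i) holds trivially when $A$ is a point.

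The gap is in producing the second bounding line. Your ``more simply'' argument is sound whenever $D_k$ has at least two points. Then $W=\mathbb{S}^1\setminus D_k$ is a nonempty open set that is not a punctured circle, so $\partial W$ has at least two points. Each of them lies in $D_k\cap D_j$ for some $j\neq k$, because $\bigcup_{j\neq k}D_j$ is closed and contains $\overline W$.

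When $D_k=\{p\}$, however, you only assert that there are ``further contact points'', and that assertion is the whole content of (i). The case is not vacuous: $D_3$ in the example above is a single direction. Your component argument has the same hole, since the two endpoints of a component coincide exactly when the component is a single point, not only when it is the whole circle.

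Either of two patches closes it:
\begin{itemize}
\item Choose $k$ at the outset with $\lvert D_k\rvert\geq 2$. This is possible because finitely many sets cover the uncountable circle.
\item Use the paper's argument. Suppose $\theta_*$ were the only direction lying in two regions. Then the connected set $\mathbb{S}^1\setminus\{\theta_*\}$ would be the disjoint union of the relatively closed sets $D_m\setminus\{\theta_*\}$, so one of them would be all of it. The closed set $D_m$ would then equal $\mathbb{S}^1$, contradicting your first step.
\end{itemize}
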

\begin{proof}
    Since $A = \bigcup_{k=0}^{n-1} F_k(A)$ and $h_A(\theta) = \max_k h_{F_k(A)}(\theta)$, every $\theta \in [0,2\pi]$ belongs to at least one $D_k$, so the union of $D_k$ covers $\mathbb{S}^1$. Each $D_k$ is closed as it is the preimage of $\{0\}$ under the continuous non-negative function $h_A(\theta)-h_{F_k(A)}(\theta)$.
    
    We now show that there is no $k$ for which $D_k=\mathbb{S}^1$. By way of contradiction, suppose $D_k=\mathbb{S}^1$ for some $0\leq k<n$, so that $h_A(\theta)=h_A(L_k^{\mathrm T}\theta)+\langle\theta,\vec{v}_k\rangle$ holds for all $\theta\in[0,2\pi]$. Applying this identity $m$ times to the right hand side of the equation gives
    \[h_A(\theta)=h_A\left((L_k^{\mathrm T})^m\theta\right)+\sum_{j=0}^{m-1}\left\langle (L_k^{\mathrm T})^j\theta,\vec{v}_k\right\rangle.\]
    Since $F_k$ is a contraction, then $\|(L_k^{\mathrm T})^m\|\to 0$ as $m\to\infty$. Because the support function is positive homogeneous and $A$ is compact, $\lvert h_A(t)\rvert\leq\operatorname{diam}(A)\cdot\lvert t\rvert$ for $t\in\mathbb{R}^2$ and, thus, the first term in the equation above vanishes in the limit. The series converges absolutely since $\langle(L_k^{\mathrm T})^j\theta,\vec{v}_k\rangle=\langle\theta,L_k^j\vec{v}_k\rangle$ and $\sum_{j=0}^{\infty} \|(L_k)^j\|\leq(I-\|L_k\|)^{-1}<\infty$, so taking the limit yields 
    \[h_A(\theta)=\sum_{j=0}^{\infty}\langle\theta,L_k^j\vec{v}_k\rangle=\left\langle \theta,\sum_{j=0}^{\infty}L_k^j\vec{v}_k\right\rangle=\left\langle\theta,(I - L_k)^{-1}\vec{v}_k\right\rangle.\]
	A support function of the form $h_A(\theta)=\langle\theta,\vec{p}\rangle$ for a fixed $\vec{p}\in\mathbb{R}^2$ implies $\overline{\conv(A)}$ is equal to the singleton set $\{\vec{p}\}$, which is impossible. Hence $D_k\subsetneq[0,2\pi]$ for all $k$.

    Since $\{D_k\}_{k=0}^{n-1}$ is a finite closed cover of $\mathbb{S}^1$, which is connected, then the sets $D_k$ cannot be pairwise disjoint. Therefore, there exist pair of distinct integers $0\leq k,j<n$ with $D_k\cap D_j\neq\emptyset$. Any $\theta_*\in D_k\cap D_j$ satisfies
    \[h_{F_k(A)}(\theta_*)=h_{F_j(A)}(\theta_*),\]
    so the supporting line to $A$ with normal $\theta_*$ simultaneously supports both $F_k(A)$ and $F_j(A)$, and $A$ lies in the corresponding closed left half-plane. This gives a bounding line.

    To show that there is at least one other bounding line, consider the set of directions $\theta$ where for a specific $0\leq k<n$ the value of $h_{F_k(A)}(\theta)$ is strictly greater than that of $h_{F_j(A)}(\theta)$ for all $j\neq k$:
    \[U:=\bigcup_{0\leq k<n}\left\{\theta\in\mathbb{S}^1~:~h_{F_k(A)}(\theta)>h_{F_j(A)}(\theta)~\forall j\neq k\right\}=\bigcup_{0\leq k<n}D_k^\circ,\]
    where $D_k^\circ$ denotes the interior of the set. The set $U$ is open since it is the union of preimages of open intervals under the continuous functions $h_{F_k(A)}(t)-h_{F_j(A)}(t)$. By way of contradiction, suppose there was only one bounding line, then there exists only one pair of distinct integers $0\leq k,j<n$ for which $D_k\cap D_j\neq\emptyset$ and, in particular, $D_k\cap D_j=\{\theta_*\}$. Then \[U=\mathbb{S}^1\setminus(D_k\cap D_j)=\mathbb{S}^1\setminus\{\theta_*\}\] is an open connected set. It follows that $\{D_m\cap U\}_{m=0}^{n-1}$ is a cover of $U$ by pairwise disjoint closed sets. Therefore, by connectedness of $U$ for exactly one $0\leq m<n$ the set $D_m\cap U$ is non-empty. Evidently, $m=k$ or $m=j$ but then $D_m=\mathbb{S}^1$ which is a contradiction.

    Suppose now that the matrices $L_k$ are all equal to $L$, then the support function to each $F_k(A)$ becomes \[h_{F_k(A)}(\theta)=h_A(L^{\mathrm T}\theta)+\langle\theta,\vec{v}_k\rangle.\] Thus, the condition $h_{F_k(A)}(\theta)\geq h_{F_j(A)}(\theta)$ for all $\theta\in D_k$ simplifies to \[\langle\theta,\vec{v}_k\rangle\geq\langle\theta,\vec{v}_j\rangle\implies\langle\theta,\vec{v}_k-\vec{v}_j\rangle\geq0\] and the result follows. 
\end{proof}

For the remaining of this article we will restrict ourselves to IFS where part $(iii)$ of the above poposition is satisfied, so it is natural to have the following definitions.

\begin{definition}
    Let $\{\vec{v}_k\}_{k=0}^{n-1}$, for $n\geq2$, be the translation vectors of an homogeneous IFS generated by affine transformations. For any $0\leq k\neq j<n$ the set \[\ell_{k,j}:=\left\{\vec{v}_k+t(\vec{v}_k-\vec{v}_j)~|~t\in\mathbb{R}\right\}\] is called a \emph{core line} if a segment of it is an edge of the convex hull of the set of translation vectors. The corresponding bounding lines to the limit set $A$ are denoted by $\widehat{\ell}_{k,j}$.
\end{definition}

\begin{definition}
    Let $A$ be the attractor of an homogeneous IFS generated by $n$ affine transformations. Define the \emph{bounding set} of $A$, which is denoted by $\widehat{\conv}(A)$, to be the intersection of all the closed left half-planes of the bounding lines $\widehat{\ell}_{k,j}$.
\end{definition}
By definition it follows that $\conv(A)\subseteq \widehat{\conv}(A)$ and since a finite intersection of half-planes is convex, then so is $\widehat{\conv}(A)$.
\begin{figure}
    \centering
    \includegraphics[scale=0.4, angle=0]{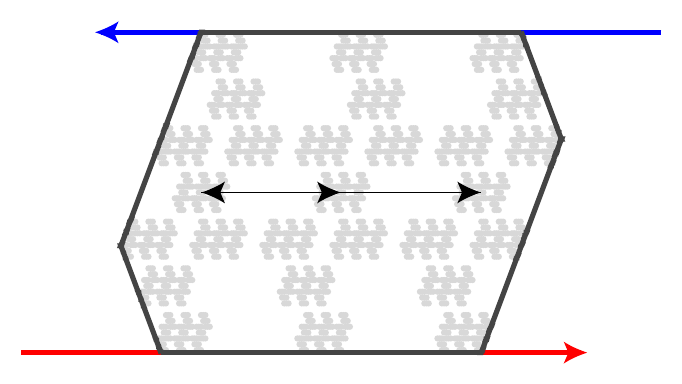}
    \includegraphics[scale=0.4,angle=90]{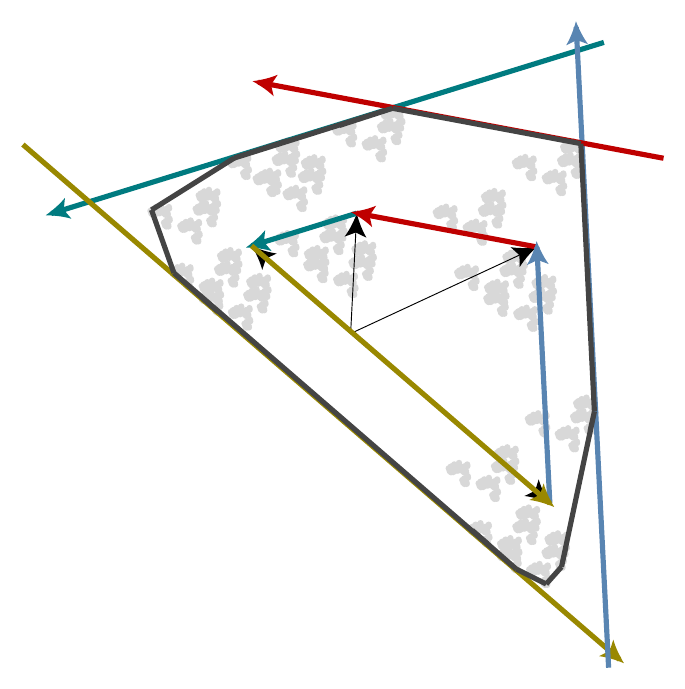}%
    \includegraphics[scale=0.4]{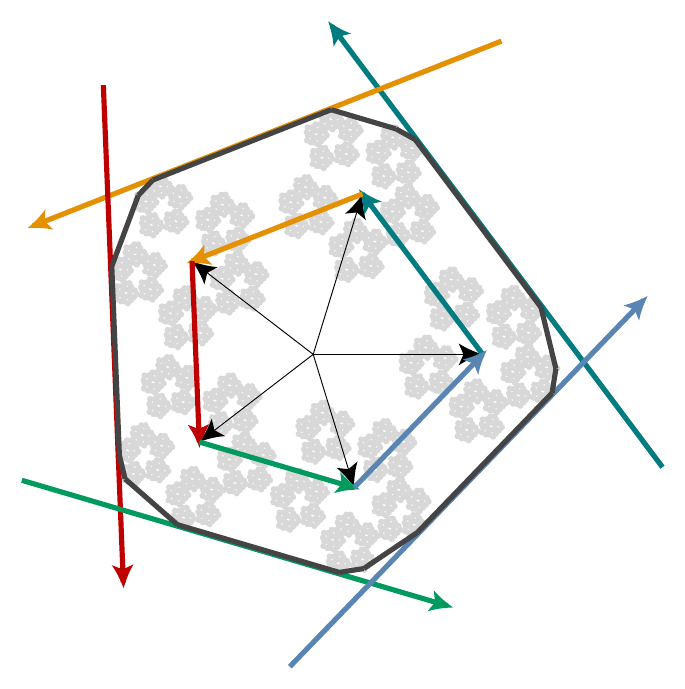}
    \caption{Examples of bounding set and convex hull of the attractor for IFS generated by three, four, or five affine transformations. The arrows that share a common origin are the translation vectors of the affine transformations. The arrows connecting pairs of translation vectors are segments of the core lines.}
    \label{fig:randomhull}
\end{figure}
\begin{proposition}
\label{prop:extremeconvexcore}
    Let $\{F_k\}_{k=0}^{n-1}$ be an homogeneous IFS of affine transformations and let $A$ be the associated limit set.
    
    Then for any edge $\gamma$ of $\conv(A)$ there exists at least one, possibly finite or empty, sequence $w_0w_1w_2\cdots$ with $w_k\in\{0,1,\cdots,n-1\}$ such that \[(F_{w_0}\circ F_{w_1}\circ F_{w_2}\circ\ldots)(\widehat{\gamma})\supseteq \gamma\]
    where $\widehat{\gamma}$ is an edge of $\widehat{\conv}(A)$.
\end{proposition}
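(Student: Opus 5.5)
We would prove this by following an edge of $\conv(A)$ backwards through the maps until it lands on a bounding line. Fix an edge $\gamma$ of $\conv(A)$ with outward normal $\theta$. The first step is to sort the possibilities for $\theta$ using Proposition~\ref{prop:boundinglines}(iii). Because the IFS is homogeneous, the dominance regions $D_k$ are the closed normal cones of $\conv(\{\vec{v}_k\})$ at its vertices. Hence either $\theta$ is perpendicular to a core line $\ell_{k,j}$, or $\theta$ lies in the interior of exactly one region $D_k$.

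In the first case we claim $\gamma$ lies on the bounding line $\widehat{\ell}_{k,j}$ and is contained in the corresponding edge $\widehat{\gamma}$ of $\widehat{\conv}(A)$. Since $\conv(A)\subseteq\widehat{\conv}(A)$ and both sets have supporting lines with normal $\theta$ at the same height $h_A(\theta)$, the claim follows. We then take the empty sequence.

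In the second case, all points of $A$ maximizing $\langle\theta,\cdot\rangle$ lie in $F_k(A)$. So $\gamma=\conv(A)\cap\mathcal{L}(\theta)$ equals the face of $F_k(\conv(A))=T\conv(A)+\vec{v}_k$ in direction $\theta$. Here we use Lemma~\ref{lem:extreme} (affine maps commute with $\conv$) and Theorem~\ref{thm:extreme}. Therefore $\gamma=F_k(\gamma_1)$, where $\gamma_1$ is the face of $\conv(A)$ with outward normal $\theta_1=T^{\mathrm T}\theta/\|T^{\mathrm T}\theta\|$. This face is a genuine edge, because $F_k$ is a bijection preserving dimension. Set $w_0=k$ and repeat the argument with $\gamma_1$ in place of $\gamma$. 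This produces normals $\theta_m\propto (T^{\mathrm T})^m\theta$ and edges $\gamma_m$ with $\gamma=F_{w_0}\circ\cdots\circ F_{w_{m-1}}(\gamma_m)$.

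The process stops as soon as some $\theta_m$ is perpendicular to a core line. At that point $\gamma_m\subseteq\widehat{\gamma}$ by the first case, which gives the finite composition $(F_{w_0}\circ\cdots\circ F_{w_{m-1}})(\widehat{\gamma})\supseteq\gamma$. If the process never stops, we obtain an infinite sequence. We then interpret the infinite composition as in Theorem~\ref{thm:CodeSpaceProjection}, as a limit of nested images: at every finite stage $\gamma\subseteq F_{w_0}\circ\cdots\circ F_{w_{m-1}}(\conv(A))$, and these nested sets shrink to a point. So $\gamma$ would be a single point, contradicting that $\gamma$ is an edge. In that situation the sequence is to be understood as degenerate, which is the ``empty'' alternative allowed in the statement.

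The main obstacle is this infinite case, and we would handle it with a length argument. Since $\gamma=F_{w_0}\circ\cdots\circ F_{w_{m-1}}(\gamma_m)$, we have $|\gamma|\le \|T\|^m\,|\gamma_m|\le\|T\|^m\operatorname{diam}(A)\to0$. This forces every genuine edge to reach a bounding line after finitely many steps. In other words, the orbit $(T^{\mathrm T})^m\theta$ must hit a core normal. The statement's allowance of an infinite or empty sequence then covers the degenerate faces, namely corner points, and the case where $\gamma$ already lies on a bounding line.
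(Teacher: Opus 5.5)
Your proof is correct, and it takes a different, more complete route than the paper. The paper argues forward and only in outline: each bounding line carries a segment of an edge of $\conv(A)$, and because extreme points are images of extreme points (Theorem~\ref{thm:extreme}), some $F_k$ sends an edge into another edge. It never explains why every edge of $\conv(A)$ is reached this way from a bounding line, nor why such a chain stops.

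You instead pull an edge backwards. By Proposition~\ref{prop:boundinglines}(iii), if $\theta$ lies in no $D_k\cap D_j$, then every maximizer of $\langle\theta,\cdot\rangle$ on $A$ lies in a single $F_k(A)$. This gives the exact identity $\gamma=F_k(\gamma_1)$, with $\gamma_1$ the face in direction $T^{\mathrm T}\theta$. The contraction estimate then forces the backward chain to land on a bounding line after finitely many steps.

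This proves more than the statement asks. A positive-length edge is always a \emph{finite} image of a subsegment of an edge of $\widehat{\conv}(A)$, and infinite words arise only for corner points. Your exact equalities are also precisely the single-map relations (such as $B_{j,m}=f_n(B_{j,m-1})$ in the proof of Theorem~\ref{thm:maincplxarbitrary}) from which the DGIFS is later assembled.

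Three small repairs are needed.

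First, phrase the dichotomy as ``$\theta\in D_k\cap D_j$ for some $k\neq j$'' rather than ``$\theta$ perpendicular to a core line''. The inward normal of a core line typically lies in the interior of a single $D_k$, and no bounding line has that normal, so your first case would misfire there. Likewise, ``hit a core normal'' should mean an outward one.

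Second, bound lengths by $\|T^m\|$ rather than by $\|T\|^m$; $\|T^m\|$ tends to $0$ because the spectral radius is less than $1$. This also covers the Jordan block of Section~\ref{sect:jordan}, whose operator norm is at least $\sqrt{1+\eta^2}>1$.

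Third, delete the sentence calling the non-terminating case the ``empty'' alternative. For an edge that case is simply impossible, and the empty word is your first case.
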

\begin{proof}
    By definition, any edge $\widehat{\gamma}$ of $\widehat{\conv}(A)$ is a segment of some bounding line (or possibly the entire bounding line in the case when there are only two parallel ones). Furthermore, if $\widehat{\ell}_{k,j}$ is a bounding line, then by definition we can find a segment $[\vec{x},\vec{y}]$ of it which is contained or equal to a particular edge $\gamma$ of $\conv(A)$.
    
    From Theorem \ref{thm:extreme} we know that extreme points of $A$ are images of other extreme points. Since any edge $\gamma$ of $\conv(A)$ contains extreme points, then there must exists an integer $0\leq k< n$ for which $F_k(\gamma)\subseteq\gamma'$ for some other edge $\gamma'$ of $\conv(A)$. 
\end{proof}

As we shall see later, the above proposition is the core idea in our search of a description of the extreme points. We will see that the sequences $w_0w_1\cdots$ in Proposition \ref{prop:extremeconvexcore} are obtained by a sofic system on $\{0,1,\cdots,n-1\}$.

\section{The complex case}
\label{sect:complex}
The first family of  homogeneous IFS that we will consider has affine transformations with contracting matrix of the form $T=\begin{pmatrix}a&-b\\b&a\end{pmatrix}$ for real $a,b$. Applying $T$ to any vector in $\mathbb{R}^2$ is equivalent to multiply a point in $\mathbb{C}$ by $\lambda=a+b\I$, hence we will write $F_k(\vec{x})=T\vec{x}+\vec{v}_k$ as $f_k(z)=\lambda z+v_k$.

We will first prove that the $\conv(A)$ has finitely many sides if and only if $\arg(\lambda)$ is a rational multiple of $2\pi$ and in that case there is a DGIFS describing $\ext(A)$. Subsequently, we will focus on two families of IFS which have been studied before and differ by the type of translations: either $\{v_k\}_{k=0}^{N-1}$ are vertices of a regular $n$-gon or they lie along a line through the origin. We refer to the first family as a \emph{polygonal IFS} and the second family as \emph{collinear IFS}.

One particular consequence of what we will show in the next sections is that when $\arg(\lambda)$ is a rational multiple of $2\pi$, the itineraries of corner points, i.e. the vertices of the convex hull, are always periodic.

We remind the reader that this setting is the only overlap with the work of Vass \cite{V18} and \cite{DT05}.

\subsection{Arbitrary translations}
\label{subsect:complexarbitrary}
Fix an integer $N\geq2$, $\lambda \in \mathbb{D}\setminus\{0\}$, and define $\Gamma_{N}=\{f_k\}_{
kj=0}^{N-1}$ to be the IFS consisting of the contractive similarities \[f_k(x)=\lambda z+v_k,\] where $v_k\in\mathbb{C}$. Let $\alpha_k$ be such that $\arg(v_k)=2\pi\alpha_k$. Denote by $C$ the set of $v_k$ that are corners of $\conv(\{v_k\}_{k=0}^{N-1})$ and, after a reindexing if necessary, assume these are $v_0,v_1,\ldots,v_{N_c-1}$ for some $N_c\leq N$. We also assume that these vertices are ordered counterclockwise according to their angle, that is $0\leq\alpha_0<\alpha_1<\cdots<\alpha_{N_c-1}<1$. The outward normal direction of each core line $\ell_{k+1,k}$ then is given by $\arg(\pm\I(v_{k+1}-v_k))/(2\pi)=\arg(v_{k+1}-v_k)/(2\pi)\pm1/4$, where $k$ is considered modulo $N_c$. By definition, $\theta_k$ are also the outward normal directions of the bounding lines. Proposition \ref{prop:extremeconvexcore} tells us that there are iterates of the sides of the bounding set of $A$ that are mapped into the boundary of $\conv(A)$. Therefore, by setting $\phi$ to be such that $\arg(\lambda)=2\pi \phi$, define the set \[\Theta:=\bigcup_{j=0}^{N_c-1}\{k\phi+\theta_j\bmod 1~:~k\geq0\}.\] In other words, $\Theta$ is the set of all possible outward normal directions obtained by the repeatedly rotated bounding lines. For ease of exposition, we say that a line is at angle $\theta$ if the normal vector to it has argument $\theta$.
\begin{lemma}
\label{lem:Thetaset}
    If $\phi=p/q$ with coprime integers $p$ and $q$, then $\Theta$ has at most $qN_c$ elements. If $\phi\not\in\mathbb{Q}$, then $\Theta$ is a dense subset of $[0,1]$.
\end{lemma}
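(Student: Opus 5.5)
The plan is to treat $\Theta$ as a finite union of orbits of the circle rotation $R_\phi(x)=x+\phi \bmod 1$ on $\mathbb{R}/\mathbb{Z}\cong[0,1)$. By definition,
\[\Theta=\bigcup_{j=0}^{N_c-1}\mathcal{O}^+(\theta_j),\qquad \mathcal{O}^+(\theta_j):=\{R_\phi^k(\theta_j)~:~k\geq 0\},\]
so both claims reduce to statements about a single forward orbit under rotation by $\phi$. This is because there are exactly $N_c$ bounding-line directions $\theta_j$, one for each edge of $\conv(\{v_k\})$ by Proposition~\ref{prop:boundinglines}(iii).

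\textbf{Rational case.} Let $\phi=p/q$ with $\gcd(p,q)=1$. Then $q\phi=p\in\mathbb{Z}$, so $R_\phi^{q}=\mathrm{id}$. Hence for each $k\ge 0$, writing $k=mq+r$ with $0\le r<q$, we get $R_\phi^k(\theta_j)=R_\phi^r(\theta_j)$. Each orbit $\mathcal{O}^+(\theta_j)$ is therefore contained in $\{\theta_j+r p/q \bmod 1 : 0\le r<q\}$, which has at most $q$ elements; coprimality in fact gives exactly $q$, since $rp/q\in\mathbb{Z}$ forces $q\mid r$. Taking the union over the $N_c$ indices $j$ gives $|\Theta|\le qN_c$. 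The inequality can be strict when two orbits coincide, namely when $\theta_i-\theta_j\in\frac1q\mathbb{Z}$.

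\textbf{Irrational case.} It suffices to show that one orbit, say $\mathcal{O}^+(\theta_0)$, is dense. This is Kronecker's (Jacobi's) theorem, and I would include the standard pigeonhole proof for completeness:
\begin{enumerate}
\item Irrationality of $\phi$ makes the points $k\phi \bmod 1$, $k\ge0$, pairwise distinct, since $k\phi-k'\phi\in\mathbb{Z}$ forces $k=k'$.
\item Given $\varepsilon>0$, choose $M>1/\varepsilon$. Among the $M+1$ points $0,\phi,\dots,M\phi \bmod 1$, two lie within distance $<\varepsilon$ of each other on the circle. So there is $0<d\le M$ with $\delta:=\|d\phi\|\in(0,\varepsilon)$, where $\|\cdot\|$ denotes distance to the nearest integer.
\item The multiples $md\phi \bmod 1$, $m\ge0$, advance in steps of length exactly $\delta$ in a fixed direction around the circle, so they form a $\delta$-net of the circle.
\item Translating by $\theta_0$ shows that $\mathcal{O}^+(\theta_0)$ is $\varepsilon$-dense.
\end{enumerate}

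The only delicate point is in the irrational case. We only have forward iterates, $k\ge0$, so I must check that the one-sided sequence $md\phi$ still sweeps the entire circle. It does: whichever sign $d\phi$ has modulo $1$, repeated steps of size $\delta$ in one direction wrap around the full circle. So no negative powers of $R_\phi$ are needed.
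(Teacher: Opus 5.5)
Your proposal is correct and takes essentially the same approach as the paper: $\Theta$ is a union of $N_c$ forward orbits under rotation by $\phi$, each of period $q$ when $\phi=p/q$ and dense by Kronecker's theorem when $\phi\notin\mathbb{Q}$. You add details the paper's proof leaves implicit: the exact orbit size via coprimality, the criterion $\theta_i-\theta_j\in\frac1q\mathbb{Z}$ for two orbits to coincide, and the pigeonhole argument that forward iterates alone are dense.
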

\begin{proof}
    By definition $\Theta$ is the union of the orbits of $\theta_j$ under rotation by $\phi$. If $\phi$ is irrational then a standard exercise shows that for each $j$ the set $\{k\phi+\theta_j\}_{k\geq0}$ is a dense subset of $[0,1]$. If $\phi=p/q$, for coprime integers $p$ and $q$, then for each $j$ the orbit of $\theta_j$ is periodic with period $q$. If the orbit of each $\theta_j$ do not intersect, then $\lvert\Theta\rvert=qN_c$. If there exists $k\geq1$ such that $\theta_j+k\phi=\theta_{j'}$ for two distinct $j$ and $j'$, then the orbit of $\theta_j$ coincides with that of $\theta_{j'}$ and, consequently, $\lvert\Theta\rvert\leq q(N_c-1)$.
\end{proof}
\begin{lemma}
\label{lem:ThetasetConvA}
    The angles in the set $\Theta$ are the outward normal directions of the sides of $\conv(A)$.
\end{lemma}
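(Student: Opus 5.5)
The proof compares the face of $A$ in direction $\theta$ with the face of $A$ in the rotated direction $\theta-\phi$. It shows both inclusions: every side of $\conv(A)$ has outward normal in $\Theta$, and every angle of $\Theta$ is the outward normal of a genuine side, i.e. one of positive length. For an angle $\theta$ write $E(\theta):=A\cap\widehat{\mathcal L}(\theta)$, where $\widehat{\mathcal L}(\theta)$ is the supporting line to $A$ at angle $\theta$. Then $\conv(A)$ has a side at angle $\theta$ exactly when $E(\theta)$ contains two distinct points, and that side is $\conv(E(\theta))$.

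Since $f_k$ multiplies by $\lambda=|\lambda|e^{2\pi\I\phi}$, the computation preceding Proposition~\ref{prop:boundinglines} gives
\[h_{f_k(A)}(\theta)=|\lambda|\,h_A(\theta-\phi)+\langle\theta,v_k\rangle .\]
The same computation shows that the face of $f_k(A)$ at angle $\theta$ is $f_k(E(\theta-\phi))$. Combining this with $h_A=\max_k h_{f_k(A)}$ and with the description of $D_k$ in Proposition~\ref{prop:boundinglines}(iii), we get
\[E(\theta)=\bigcup_{k\,:\,\theta\in D_k} f_k\bigl(E(\theta-\phi)\bigr).\]
This identity is the key step, and it needs checking carefully with the paper's sign conventions for $\phi$ and for outward normals.

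\emph{Every side has normal in $\Theta$.} Suppose $\conv(A)$ has a side at angle $\theta$. If $\theta$ lies in a single $D_k$, the identity reads $E(\theta)=f_k(E(\theta-\phi))$. Then $E(\theta-\phi)$ is a nondegenerate segment of length $|\lambda|^{-1}$ times that of $E(\theta)$, so there is a side at angle $\theta-\phi$. Repeat this as long as the current angle lies in only one dominance region. Each step multiplies the side length by $|\lambda|^{-1}>1$, while every side has length at most $\operatorname{diam}(A)$. Hence after finitely many steps, say $m$, we reach an angle lying in some $D_k\cap D_j$ with $k\ne j$. By Proposition~\ref{prop:boundinglines}(iii) that angle is one of the $\theta_j$, so $\theta=\theta_j+m\phi \bmod 1\in\Theta$.

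\emph{Every angle of $\Theta$ carries a side.} At a bounding angle $\theta_j\in D_j\cap D_{j+1}$, the identity gives
\[E(\theta_j)\supseteq f_j(E(\theta_j-\phi))\cup f_{j+1}(E(\theta_j-\phi)).\]
These two sets are translates of each other by $v_{j+1}-v_j\neq0$, a vector parallel to the line $\widehat{\ell}_{j+1,j}$. Two distinct translates of a nonempty set along a line cannot both be a single common point, so $E(\theta_j)$ contains two distinct points and there is a genuine side at $\theta_j$. Going forward, every $\theta$ lies in at least one $D_i$, so $E(\theta_j+\phi)\supseteq f_i(E(\theta_j))$ is again a segment of positive length. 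By induction there is a nondegenerate side at every angle $\theta_j+k\phi$, $k\ge0$.

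The main obstacle I expect is the backward step. One must make sure that $\theta$ lying in exactly one $D_k$ really forces $E(\theta)=f_k(E(\theta-\phi))$, and that the only angles in two or more dominance regions are the $\theta_j$; for the latter I will rely on part (iii) of Proposition~\ref{prop:boundinglines}. The identity itself also has to be justified, but that is a direct computation with the support function. When $\phi\notin\mathbb Q$ the statement means that $\conv(A)$ has sides at a dense set of angles, and the same argument covers this case with no finiteness needed beyond the length bound.
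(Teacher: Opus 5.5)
Your proof is correct, and it obtains the key step differently from the paper. The paper's proof is one line. It cites Proposition~\ref{prop:extremeconvexcore} (each edge of $\conv(A)$ is covered by an iterated image of an edge of $\widehat{\conv}(A)$) and notes that an image of $\widehat{\ell}_{j+1,j}$ under $k$ maps lies at angle $\theta_j+k\phi$. That yields only the inclusion of side normals into $\Theta$, and it leans on the proposition, whose proof argues forward (extreme points are images of extreme points) and even allows the covering sequence to be infinite. The angle bookkeeping, however, needs a finite chain.

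You instead prove the exact face identity $E(\theta)=\bigcup_{k:\theta\in D_k}f_k(E(\theta-\phi))$. Your sign check works out: $\langle\theta,\lambda x\rangle=\langle\bar\lambda\theta,x\rangle$, and $\bar\lambda\theta$ points at angle $\theta-\phi$. You then supply the missing termination from the growth factor $|\lambda|^{-1}$ against the bound $\operatorname{diam}(A)$, which is exactly what forces $\theta=\theta_j+m\phi$ with a finite $m\ge 0$.

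You also prove the reverse inclusion: every $\theta_j+k\phi$ carries a side of positive length, using the distinct points $f_j(x)$ and $f_{j+1}(x)$ at a bounding angle and then propagating forward. The paper leaves this implicit. Yet it is what the lemma needs to be read as an equality of sets, and it underlies the claims in Theorems~\ref{thm:maincplxpolygon} and~\ref{thm:maincplxcollinear} that for irrational $\phi$ the hull has infinitely many sides.

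Your identity is also precisely the system $B_{j,0}=\bigcup_k f_k(B_{j,q-1})$, $B_{j,m}=f_n(B_{j,m-1})$ used in the proof of Theorem~\ref{thm:maincplxarbitrary}, so it justifies that step as well. The only tacit hypothesis, which the paper shares in Proposition~\ref{prop:boundinglines}(iii), is that the $v_k$ are pairwise distinct. If two coincide, their maps have identical dominance regions, and ``lies in a single $D_k$'' must be read as ``lies in the region of a single distinct map''.
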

\begin{proof}
    The claim follows from Proposition \ref{prop:extremeconvexcore} and that $f_{s}(\widehat{\ell}_{j+1,j})$ is at angle $k\phi+\theta_j$ for some sequence $s=s_1s_2\cdots s_k$ with $s_j\in\{0,1,\cdots, N-1\}$.
\end{proof}

\begin{theorem}
\label{thm:maincplxarbitrary}
    Let $\phi$ be such that $\arg(\lambda)=2\pi\phi$ and consider $\Theta$ as defined above.
    \begin{itemize}
        \item If $\phi=p/q$ with coprime integers $p$ and $q$, then $\Theta$ has at most $qN_c$ elements. Moreover, 
        \begin{enumerate}
            \item the convex hull of $A$ is a finite sided polygon whose sides are at the angles of $\Theta$;
            \item the set of extreme points of $A$ is described by a DGIFS consisting of at most $qN_c$ equations;
            \item if $A$ is convex, then $\lvert\lambda\rvert\geq2^{-1/q}$.
        \end{enumerate}
        \item If $\phi\not\in\mathbb{Q}$ then 
        \begin{enumerate}
            \item $\Theta$ is a dense subset of $[0,1]$.
            \item The attractor $A$ is not convex.
        \end{enumerate}
    \end{itemize}
\end{theorem}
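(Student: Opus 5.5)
The plan is to treat the rational and irrational cases separately, building everything on Lemma~\ref{lem:Thetaset} and Lemma~\ref{lem:ThetasetConvA}.

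\emph{Rational case.} The cardinality bound $|\Theta|\le qN_c$ is exactly Lemma~\ref{lem:Thetaset}. For (1), Lemma~\ref{lem:ThetasetConvA} shows that every side of $\conv(A)$ has outward normal in $\Theta$. A compact convex set in the plane whose edge normals lie in a finite set is a polygon with at most $|\Theta|$ sides. Indeed, each boundary point that is not a corner lies on an edge, and the support function is then piecewise linear in the direction. The small point I need to check is that $\conv(A)$ has no curved boundary arcs. Any boundary point $x\in\partial\conv(A)\cap A$ lies in some $f_k(A)$, and by Theorem~\ref{thm:extreme} it is an image of an extreme point. Iterating this, the supporting directions at $x$ are rotations by $\phi$ of supporting directions at points deeper in the coding. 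I would then argue via the support function: $h_A(\theta)=\max_k\bigl(h_A(\theta-\phi)+\langle\theta,v_k\rangle\bigr)$ in angle coordinates. This recursion, iterated $q$ times, expresses $h_A(\theta)$ as $h_A(\theta)$ (rotation by $q\phi\equiv 0$) times $|\lambda|^q$ plus a maximum of finitely many linear functions. Solving the resulting fixed-point relation $h=|\lambda|^q h+\max(\text{linear})$ gives $h_A=(1-|\lambda|^q)^{-1}\max(\text{finitely many linear functionals})$, which is the support function of a polygon. This is the cleanest route for (1), so I would lead with it.

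For (2), label each side of $\conv(A)$ by its normal $\theta\in\Theta$ and let $E_\theta=A\cap$ that side. Apply $f_k$ to the bounding relation: $h_{f_k(A)}(\theta)=|\lambda|h_A(\theta-\phi)+\langle\theta,v_k\rangle$. So $f_k(A)$ touches the side at $\theta$ exactly when $k\in D_k(\theta)$, and in that case $E_\theta\cap f_k(A)=f_k(E_{\theta-\phi})$. Hence
\[E_\theta=\bigcup_{k:\,\theta\in D_k}f_k(E_{\theta-\phi}).\]
If $\theta-\phi$ is not itself an edge normal, $E_{\theta-\phi}$ is a corner point and the same relation holds with that singleton vertex. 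I would therefore index vertices by all of $\Theta$, letting degenerate sides be corners. This gives a DGIFS with at most $qN_c$ equations, each vertex having out-degree $\ge1$. Uniqueness of its attractor list then identifies $\ext(A)=\bigcup_\theta E_\theta$.

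For (3), suppose $A$ is convex, so $A$ equals the polygon. Then Lebesgue measure gives $m(A)\le\sum_k m(f_k(A))=N|\lambda|^2m(A)$. This is too weak on its own, so I will instead use the sides. The side $E_\theta$ is a full segment of length $L_\theta$. By the recursion it is covered by $|\{k:\theta\in D_k\}|\le 2$ copies of $E_{\theta-\phi}$ scaled by $|\lambda|$, since generically at most two $v_k$ maximize $\langle\theta,\cdot\rangle$ on a core edge. Chaining around the $q$-cycle gives $L_\theta\le 2^q|\lambda|^q L_\theta$, hence $|\lambda|\ge 2^{-1/q}$. The main obstacle is the case where a side is the image of a corner with multiplicity $1$. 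In that case the cycle would produce $L_\theta\le|\lambda|^qL_\theta$, forcing $L_\theta=0$, and I would handle this by choosing a cycle through a genuine edge, which exists since a convex $A$ has positive-length sides. Collinear $v_k$ on a core edge could also give more than two maps, and I would treat that by using only the two endpoint maps after noting that interior ones contribute segments inside.

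\emph{Irrational case.} Density of $\Theta$ is Lemma~\ref{lem:Thetaset}. For non-convexity, note that if $A$ were convex it would have positive area. Indeed, it is not a segment, since a segment would be invariant under rotation by the irrational angle $2\pi\phi$, which is impossible. Convexity would also make $A$ a compact convex set whose edge normals include the images of the bounding-line directions. I would show a genuine edge exists: the bounding line $\widehat\ell_{k+1,k}$ supports both $f_k(A)$ and $f_{k+1}(A)$ at distinct points, so convexity forces the segment between them into $A$. Rotating by the recursion then produces edges of comparable length $|\lambda|^m L$ in every direction of the dense set $\Theta$. Their lengths sum to at most the perimeter, so the partial sums $\sum|\lambda|^m L$ over distinct normals are finite, and I expect no contradiction from summability alone. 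Instead, I would argue that a convex $A$ satisfies $A=\conv(A)$ with edges that overlap images of the same edge, giving $L\le 2|\lambda|^mL$ along each chain, so $|\lambda|^m\ge1/2$ for all $m$. This is impossible for large $m$, and it is the key estimate in this case.
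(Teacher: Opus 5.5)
\textbf{The gap is in part (3) of the rational case.}

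First, the arithmetic does not reach the claim. Chaining gives $L_\theta\le 2^q|\lambda|^qL_\theta$, which yields only $|\lambda|\ge 1/2$, not $|\lambda|\ge 2^{-1/q}$. To get $2^{-1/q}$ you need exactly one doubling per $q$-cycle, i.e.\ the other $q-1$ directions $\theta-m\phi$ of the cycle must each have a single active map.

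Second, ``at most two active maps'' is false, not merely non-generic. Every translation vector on a core edge maximizes $\langle\theta_j,\cdot\rangle$. You also cannot drop the interior copies in a covering estimate, because they are exactly what may fill the gaps between the two outer copies.

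What your chaining does prove is $|\lambda|^q\prod_{m=0}^{q-1}c(\theta-m\phi)\ge 1$, where $c(\theta)=\lvert\{k:\theta\in D_k\}\rvert$. This reduces to $2|\lambda|^q\ge 1$ only when each core edge carries exactly two translations and the orbits of the $\theta_j$ are pairwise disjoint, i.e.\ $\lvert\Theta\rvert=qN_c$.

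Without those hypotheses, (3) is false as stated, so the step cannot be repaired:
\begin{enumerate}
\item Take $\lambda=1/3$ with the nine translations $\{0,\tfrac{1}{3},\tfrac{2}{3}\}+\I\{0,\tfrac{1}{3},\tfrac{2}{3}\}$. Then $A=[0,1]^2$ is convex, $q=1$, and $|\lambda|<1/2$.
\item Take $\lambda=\tfrac{2}{3}e^{2\pi\I/3}$ and $v_k=\xi_3^k$. Each $f_k$ maps the triangle with vertices $3\xi_3^k$ onto its corner copy of ratio $2/3$ at $3\xi_3^k$. These three copies cover the triangle, so $A$ is that triangle. Here $q=3$ and $|\lambda|=2/3<2^{-1/3}$. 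This agrees with the weaker bound $2^{-1/b}$, $b=1$, of Theorem~\ref{thm:maincplxpolygon}.
\end{enumerate}
The paper's proof of (3) rests on the same two implicit assumptions: its relations $B_{j,m}=f_n(B_{j,m-1})$, and its remark about the two outermost copies. So it does not supply the missing step either.

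\textbf{The rest is sound, and (1)--(2) take a cleaner route than the paper.} Iterating $h_A(\theta)=|\lambda|h_A(\theta-\phi)+\max_k\langle\theta,v_k\rangle$ gives the explicit formula
\[
\conv(A)=(1-|\lambda|^q)^{-1}\sum_{m=0}^{q-1}\lambda^m\conv(\{v_k\}).
\]
Its edge normals are exactly $\Theta$, each edge has positive length, and so the degenerate-side case you worried about never occurs. Your face identity $E_\theta=\bigcup_{k:\theta\in D_k}f_k(E_{\theta-\phi})$ holds exactly for every $\theta$. Hence your DGIFS on $\Theta$ is correct even when orbits of different $\theta_j$ merge, a case the paper's indexing $B_{j,m}$ does not handle.

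\textbf{The irrational non-convexity step needs fixing.} The estimate $L\le 2|\lambda|^mL$ is not justified, because the backward orbit of $\theta_j$ never returns to $\theta_j$. The fix is as follows.
\begin{enumerate}
\item Bound the far face by $\operatorname{diam}(A)$ instead of $L$.
\item Since $\phi$ is irrational, $\theta_j-m\phi$ meets the finitely many bounding directions only finitely often.
\item Beyond that point every step has a single active map, so those faces are single points.
\item Hence $E_{\theta_j}$ is a finite set.
\item Convexity would make $E_{\theta_j}$ a segment through the distinct points $f_j(y)$ and $f_{j+1}(y)$, a contradiction.
\end{enumerate}
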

\begin{remark}
    If $\phi\not\in\mathbb{Q}$, then $\ext(A)$ can be approximated by applying the above theorem while considering a $\psi\in\mathbb{Q}$ such that $\lvert\psi-\phi\rvert<\varepsilon$, for a given $\varepsilon>0$. This is due to the continuity of the function mapping any $\lambda$ to its corresponding attractor $A$.
\end{remark}
\begin{proof}
    The previous two lemmas prove the claims in (1) for each case of $\phi$. 
    
    Let $\phi=p/q$ and consider the partition of the unit circle obtained with the $\theta_j$: denote by $I_j$ the smallest arc between two consecutive $\theta_j$ so that $I_j:=[\theta_{j-1},\theta_{j}]$. Recall from Proposition \ref{prop:boundinglines} that each $\theta_j$ belongs to the dominance region $D_j$ of the map $f_j$ and to $D_k$ for at least one more $k$. It follows that the interval $I_j$ corresponds to the dominance region $D_{j}$. For each $0\leq j<N_c$ let $B_{j,0}$ be the set of extreme points of $A$ that lie on the bounding line $\widehat{\ell}_{j+1,j}$. By Theorem \ref{thm:extreme} and since $\theta_j$ is periodic with period $q$ under rotation by $\phi$, 
    \[
    B_{j,0}=\bigcup_kf_{k}(B_{j,q-1})\text{ and }B_{j,m}=f_{n}(B_{j,m-1})
    \]
    where $k$ runs over the indices of those maps $f_k$ whose dominance region contains $\theta_j$, and $n$ and $m$ satisfy $m\phi+\theta_j\in I_{n}^\circ=(\theta_{n-1},\theta_{n})$.

    Suppose now that $A$ is convex and $\phi=p/q$. By convexity, the bounding lines $\widehat{\ell}_{k+1,k}$ intersect $\partial\conv(A)=\partial A$ on segments of positive length: the sets $B_{j,i}\subset A$ for each $j$ and $i$ are segments. Using the equations we have just found, for each index $j$ we have
    \[\lvert B_{j,0} \rvert=\sum_{k}\lvert f_k(B_{j,q-1})\rvert\geq2\lvert\lambda\rvert\lvert B_{j,q-1}\rvert\text{ and }\lvert B_{j,q-1}\rvert=\lvert\lambda\rvert^{q-1}\lvert B_{j,0}\rvert\]
    which implies that we must have $2\lvert\lambda\rvert^{q}\geq1$ for the assumptions to hold. Note the first inequality, which gives a factor of $2$, is necessary because we do not know a priori the distribution of the $v_k$ along the bounding line $\widehat{\ell}_{j+1,j}$, so our best bet is to find the condition for which the two outermost copy of $B_{j,q-1}$ on the bounding line intersect.
    
    Finally, let us assume $\phi$ is irrational and that $A$ is convex. As mentioned before, the sets $B_{j,0}$ must be segments of positive length belonging to the bounding lines $\widehat\ell_{j+1,j}$. Moreover, we know that $B_{j,0}$ is the union of copies $f_k(B_{x})$ where $k$ runs over the indices of those maps $f_k$ whose dominance region contains $\theta_j$, and $B_x$ are those edges of $A$ with normal direction $t_x$ such that $\phi+t_x=\theta_j$. From Lemma \ref{lem:Thetaset} and \ref{lem:ThetasetConvA} we know that such $t_x$ do not exist.
\end{proof}

\begin{remark}
\label{rem:sofic}
In the above proof, we defined each $B_{j,i}$ to be the set of points on an edge of $\conv(A)$. The equations relating those sets are equivalent to a set of rules on $\Sigma_N:=\{0,1,\cdots,N-1\}^\mathbb{N}$ that identify which itineraries belong to a particular $B_{j,i}$. In other words, we have found a sofic system on $\Sigma_N$ that determines the extreme points of $A$.     
\end{remark}

After the following example, we will consider cases in which the translation vectors $\{v_k\}_{k=0}^N$ have a particular structure. We will give the exact system of equations that define the DGIFS and improve the condition on $\lambda$ for which the attractor is convex.

\subsection{Examples}
\label{subsect:complexgeneralexamples}
Here we will illustrate the proof of Theorem \ref{thm:maincplxarbitrary} by considering the IFS $\Gamma_4=\{f_j\}_{j=0}^3$ given by the following similarities:
\begin{align*}
    f_0(z)=\lambda z+2&\quad&f_1(z)=\lambda z+\frac{\sqrt{3}}{2}\I\\
    f_2(z)=\lambda z-\frac{3}{8}(1-\sqrt{3}\I)&\quad&f_3(z)=\lambda z+\frac{2\sqrt{3}}{4+\sqrt{3}}(1+\I)
\end{align*}
where $\lambda=0.5\exp(2\pi \I/4)=0.5\I$. The translations $v_k$ were chosen so that the outward normal direction of the sides of their convex hull were not all rational: the convex hull of $\{v_k\}_{k=0}^3$ is the region in $\mathbb{R}^2$ bounded by the lines 
\[
    y=-\frac{\sqrt{3}}{4}x+\frac{\sqrt{3}}{2},\quad y=\frac{1}{\sqrt{3}}x+\frac{\sqrt{3}}{2},\quad y=-\frac{3\sqrt{3}}{19}x+\frac{6\sqrt{3}}{19}
\] from which we calculate the outward normal direction of the sides to be 
\begin{align*}
\theta_0=\frac{1}{2\pi}\arctan\left(\frac{4}{\sqrt{3}}\right)\approx0.1849632654&\quad&\theta_1=\frac{1}{3}\\\theta_2=\frac{1}{2\pi}\arctan\left(\frac{19}{3\sqrt{3}}\right)+\frac{1}{2}\approx0.707512932496&&
\end{align*}
Let us now look at the orbit of each $\theta_j$ under rotation by $\phi=1/4$ and record in which interval $I_j=[\theta_{j-1},\theta_j]$ the iterate of $\theta_j$ lands:

\begin{align*}    
    1\phi+\theta_0\in I_2 && 1\phi+\theta_1\in I_2 && 1\phi+\theta_2\in I_0 \\
    2\phi+\theta_0\in I_2 && 2\phi+\theta_1\in I_0 && 2\phi+\theta_2\in I_1 \\
    3\phi+\theta_0\in I_0 && 3\phi+\theta_1\in I_0 && 3\phi+\theta_2\in I_2 \\
    4\phi+\theta_0\in I_0\cap I_1 && 4\phi+\theta_1\in I_1\cap I_2 && 4\phi+\theta_2\in I_2\cap I_0 
\end{align*}
Using these information we construct the directed graph in Figure \ref{fig:Random}. Note that $B_{0,0}$ has an outgoing arrow with label $3$ because $v_3$ is collinear to $v_0$ and $v_1$.
\begin{figure}
    \centering
    \includegraphics[width=0.55\linewidth]{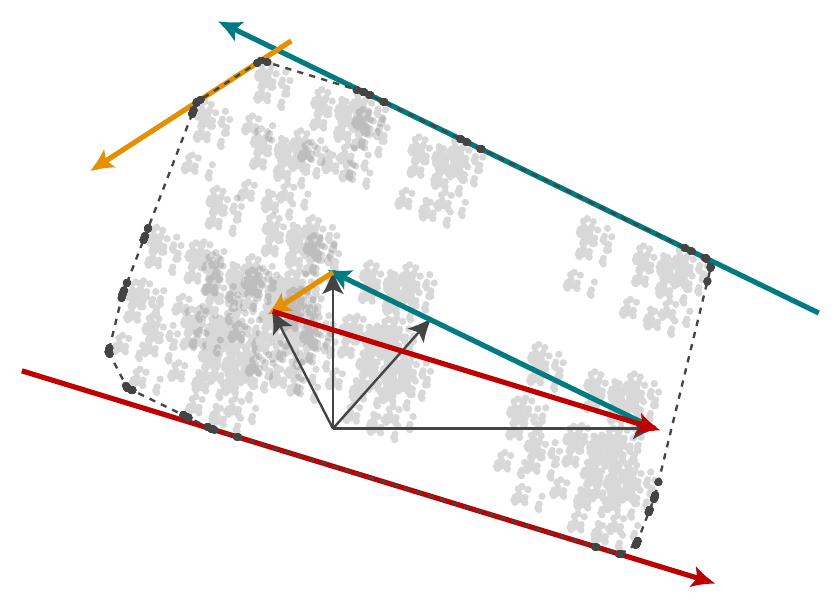}
    % \hspace{0.7cm}
    \begin{tikzpicture}
    
    \node [draw, circle] (0+60) at (0+60:4) (s0) {\small $B_{0,0}$};
    \node [draw, circle] (30+60) at (30+60:4) (s1) {\small $B_{0,1}$};
    \node [draw, circle] (60+60) at (60+60:4) (s2) {\small $B_{0,2}$};
    \node [draw, circle] (90+60) at (90+60:4) (s3) {\small $B_{0,3}$};
    \node [draw, circle] (120+60) at (120+60:4) (s4) {\small $B_{1,0}$};
    \node [draw, circle] (150+60) at (150+60:4) (s5) {\small $B_{1,1}$};
    \node [draw, circle] (180+60) at (180+60:4) (s6) {\small $B_{1,2}$};
    \node [draw, circle] (210+60) at (210+60:4) (s7) {\small $B_{1,3}$};
    \node [draw, circle] (240+60) at (240+60:4) (s8) {\small $B_{2,0}$};
    \node [draw, circle] (270+60) at (270+60:4) (s9) {\small $B_{2,1}$};
    \node [draw, circle] (300+60) at (300+60:4) (s10) {\small $B_{2,2}$};
    \node [draw, circle] (330+60) at (330+60:4) (s11) {\small $B_{2,3}$};
    
    \path[->] (s0) edge [bend left=20,below] node {$3$} (s3);
    \path[->] (s0) edge [bend left=40,below] node {$1$} (s3);
    \path[->] (s0) edge [bend left=60,below] node {$0$} (s3);
    \draw[->,thick] (s1)--(s0) node[midway, above] {\small $2$};
    \draw[->,thick] (s2)--(s1) node[midway, above] {\small $2$};
    \draw[->,thick] (s3)--(s2) node[midway, above] {\small $0$};
    
    \path[->] (s4) edge [bend left=30,right] node {$2$} (s7);
    \path[->] (s4) edge [bend left=60,right] node {$1$} (s7);
    \draw[->,thick] (s5)--(s4) node[midway, left] {\small $2$};
    \draw[->,thick] (s6)--(s5) node[midway, left] {\small $0$};
    \draw[->,thick] (s7)--(s6) node[midway, below left] {\small $0$};
    
    \path[->] (s8) edge [bend left=30,left] node {$0$} (s11);
    \path[->] (s8) edge [bend left=60,left] node {$2$} (s11);
    \draw[->,thick] (s9)--(s8) node[midway, right] {\small $0$};
    \draw[->,thick] (s10)--(s9) node[midway, right] {\small $1$};
    \draw[->,thick] (s11)--(s10) node[midway, right] {\small $2$};
    
\end{tikzpicture}
    \caption{The extreme points, the convex hull, and the bounding lines of the attractor of $\Gamma_4$. The arrows with a common origin indicate the translation vectors. On the right the sofic system for $\ext(A)$.}
    \label{fig:Random}
\end{figure}

Given that $v_0,v_3$ and $v_1$ are not equally spaced along the core line $\ell_{1,0}$, to guarantee connectedness of $B_{0,0}$ we can only ask that $f_0(B_{0,3})$ to intersect $f_1(B_{0,3})$. That is, we want \[\lvert f_0(B_{0,3})\rvert+\lvert f_1(B_{0,3})\rvert=2\lvert\lambda\rvert\lvert B_{0,3}\rvert=2\lvert\lambda\rvert^{4}\lvert B_{0,0}\rvert\geq\lvert B_{0,0}\rvert\implies 2\lvert\lambda\rvert^4\geq1.\] 

\subsection{Polygonal IFS}
\label{subsect:complexpolygonal}
Fix $n\geq2$, a non-zero $w\in\mathbb{C}$, set $\lambda\in\mathbb{D}\setminus\{0\}$, and define $\Phi_{n}=\{f_j\}_{j=0}^{n-1}$ to be the IFS consisting of the contractive similarities \[f_j(z)=\lambda z+\xi_n^jw,\]
with $\xi_n=\exp(2\pi \I/n)$. Denote by $A$ the attractor of $\Phi_{n}$. 

Calegari and Walker characterized the extreme points of $A$ when $w=1$. By defining a specific (real) linear function on the unit circle, they are able to find for a given direction the extreme points. Their underline idea in the proof of their theorem is what inspired us to generalize it. We will apply Theorem \ref{thm:maincplxarbitrary} to prove an equivalent theorem to that in \cite{CW19} in terms of directed graph iterated function system. We later found out that the real linear function used by Calegari and Walker was already introduced by Hare and Sidorov \cite{HS16} to obtain equivalent results.

\begin{theorem}
\label{thm:maincplxpolygon}
    Let $\phi$ and $\alpha$ be such that $\arg(\lambda)=2\pi\phi$ and $\arg(\xi_n)=2\pi\alpha$.
    \begin{itemize}
        \item If $p/q=\phi\in\mathbb{Q}$ with coprime integers $p$ and $q$, let $b$ be such that $bn={\textrm lcm}(n,q)$, and set $d=\gcd(n,q)$. Define \[\Theta=\left\{k\phi+\left(m+\frac{1}{2}\right)\alpha\right\}_{k=0,m=0}^{k=b-1,m=n-1}\]
        Then:
        \begin{enumerate}
            \item the convex hull of $A$ is a polygon with $nb$ sides at the angles of $\Theta$.
            \item The set of extreme points of $A$ is the unique vector of non-empty compact sets $\{B_j\}_{j=0}^{nb-1}$ which solve the following system of equations: for each $0\leq m\leq n-1$ 
            \[\begin{cases}
                B_{mb}=f_{m}(B_{mb-np/d})\cup f_{m+1}(B_{mb-np/d}) &\\
                B_k=f_{m+1}(B_{k-np/d}) &
            \end{cases}\]
            where $mb< k< (m+1)b$ (the indices of the sets are to be considered modulo $nb$, while those of the maps modulo $n$).
            \item If the attractor $A$ is convex, then $\lvert\lambda\rvert\geq2^{-1/b}$ and the system of equations defined above describes the points on the boundary of $A$.
        \end{enumerate}
        \item If $\phi\not\in\mathbb{Q}$ then 
        \begin{enumerate}
            \item the convex hull does not have finitely many sides.
            \item The attractor $A$ is not convex.
        \end{enumerate}
    \end{itemize}
\end{theorem}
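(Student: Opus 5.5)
The plan is to specialize Theorem \ref{thm:maincplxarbitrary} and its proof to translations $v_j=\xi_n^jw$. These translations are the vertices of a regular $n$-gon, so $N_c=n$. Each core line $\ell_{m+1,m}$ joins $\xi_n^m w$ to $\xi_n^{m+1}w$. After rotating so that $\arg w=0$ (a rotation conjugates the IFS and rotates $\Theta$ rigidly), the outward normal of that core line has angle $\theta_m=(m+\tfrac12)\alpha$. Moreover $D_m\cap D_{m+1}=\{\theta_m\}$ and no other pair of dominance regions meets, by Proposition \ref{prop:boundinglines}(iii). The dominance interval of $f_m$ is therefore $I_m=[\theta_{m-1},\theta_m]$.

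\textbf{Counting the sides.} For $\phi=p/q$, the union of the orbits of the $\theta_m$ under rotation by $\phi$ is $\{(m+\tfrac12)\alpha+k\phi\}$. Since $\alpha=1/n$, this set is the coset $\tfrac1{2n}+\tfrac1n\mathbb{Z}+\tfrac pq\mathbb{Z}$ modulo $1$. Its size equals the size of the subgroup $\tfrac1n\mathbb{Z}+\tfrac1q\mathbb{Z}$ modulo $1$, namely $\mathrm{lcm}(n,q)=nb$, which gives the indexing set in the statement. By Lemma \ref{lem:ThetasetConvA}, these are exactly the normals of the sides of $\conv(A)$, which proves (1). To be safe I would also verify that every angle is actually realized by a side of positive-or-point type. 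This follows from the recursion below, since each $B_j$ is nonempty.

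\textbf{The equations.} Order $\Theta$ counterclockwise and label the side at angle $\theta_m$ by $B_{mb}$. The $b-1$ angles strictly inside $I_{m+1}$ are then labelled $B_k$ with $mb<k<(m+1)b$. As in the proof of Theorem \ref{thm:maincplxarbitrary}, the side at a given angle $t$ is the image, under the dominant map(s) at $t$, of the side at angle $t-\phi$. The maps contribute as follows:
\begin{itemize}
\item For $t=\theta_m$, both $f_m$ and $f_{m+1}$ dominate.
\item For $t\in I_{m+1}^\circ$, only $f_{m+1}$ dominates.
\end{itemize}
It remains to identify the label of angle $t-\phi$. The spacing of consecutive elements of $\Theta$ is $1/(nb)$, so a rotation by $\phi=p/q$ shifts labels by $p\,nb/q=np/d$ (using $nb/q=n/d$). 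Hence angle $t-\phi$ carries label $k-np/d$, which yields the stated system. Uniqueness of the solution is the DGIFS existence–uniqueness theorem \cite[Theorem 4.3.5]{Edgar1990}. That the $B_j$ are exactly the extreme points on each side follows from Theorem \ref{thm:extreme}, together with the fact that the support at angle $t$ is attained only by the dominant pieces.

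\textbf{Convexity, irrational case, and the main obstacle.} For (3), I would follow the length argument of Theorem \ref{thm:maincplxarbitrary} along one full cycle of the graph. Starting from $B_{mb}$, the backward chain passes through single-map vertices until it returns to a corner-angle vertex. The step I expect to be the main obstacle is proving this return happens after exactly $b$ steps, i.e.\ that $k\mapsto k-np/d$ on $\mathbb{Z}/nb$ hits the multiples of $b$ with gaps of exactly $b$. This holds because $np/d\cdot j\equiv 0 \pmod b$ iff $b\mid j$, as $\gcd(np/d,b)=1$ since $b=q/d$ and $\gcd(p,q)=1$. Each single-map step multiplies length by $|\lambda|$, and the two-map step gives at most $2|\lambda|$ times the length (equality if the copies abut). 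So $|B_{mb}|\le 2|\lambda|^b|B_{mb}|$, forcing $|\lambda|\ge 2^{-1/b}$. Here the two copies $f_m,f_{m+1}$ sit symmetrically at the endpoints $v_m,v_{m+1}$, so this bound is sharp for connectedness. For $\phi\notin\mathbb{Q}$, both conclusions are direct citations of Lemma \ref{lem:Thetaset}, Lemma \ref{lem:ThetasetConvA}, and the irrational part of Theorem \ref{thm:maincplxarbitrary}.
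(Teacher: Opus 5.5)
Your argument follows the paper's proof. You specialize the proof of Theorem~\ref{thm:maincplxarbitrary}, use $D_m=I_m=[\theta_{m-1},\theta_m]$, and label the $nb$ equally spaced angles of $\Theta$ counterclockwise so that rotation by $\phi$ shifts labels by $np/d$. You then run a length argument around the graph. Your coset count and spacing computation are a clean substitute for Lemma~\ref{lem:rationalphi}, and the derivation of the system is fine.

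\textbf{The step in (3) does not close as written.} Because $\gcd(np/d,b)=1$, the backward chain from $B_{mb}$ first meets a multiple of $b$ after exactly $b$ steps. But the vertex it meets is $B_{mb-b\,np/d}=B_{m'b}$ with $m'\equiv m-np/d \pmod n$. This equals $B_{mb}$ only if $n\mid np/d$, i.e.\ only if $d=1$.
\begin{itemize}
\item For the Sierpi\'nski relative ($n=3$, $\phi=1/6$, $d=3$, $b=2$) one has $B_0=f_0(B_5)\cup f_1(B_5)$ and $B_5=f_0(B_4)$. So the chain ends at $B_4$, not $B_0$.
\item What you actually get is $\lvert B_{mb}\rvert\le 2\lvert\lambda\rvert^b\lvert B_{m'b}\rvert$, and you still need $\lvert B_{m'b}\rvert\le\lvert B_{mb}\rvert$.
\item One fix is the $n$-fold symmetry $\xi_nA=A$, which follows from $\xi_nf_j(z)=f_{j+1}(\xi_nz)$ and uniqueness of the attractor. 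It makes all corner-angle sides congruent. Alternatively, simply pick $m$ maximizing $\lvert B_{mb}\rvert$.
\end{itemize}
With this one line added, $2\lvert\lambda\rvert^b\ge1$ follows.

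This is also where the paper's own write-up is loose. It claims the chain returns to $B_{mb}$ through maps $f_{m+1}$ only, justified by $nbp/d\equiv0\bmod 1$. The congruence actually needed is modulo $nb$, and it fails for $d>1$; also, the intermediate map in the example above is $f_0$, not $f_1$. Your $\gcd$ observation combined with the symmetry is the correct version of that step.

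\textbf{A smaller point in (1).} The justification ``each $B_j$ is nonempty'' does not show that each angle of $\Theta$ carries a side of positive length, since every supporting line meets $A$. Use instead that $B_{mb}\supseteq f_m(X)\cup f_{m+1}(X)$ with $X=B_{mb-np/d}\neq\emptyset$. These are two copies offset by $\xi_n^{m+1}w-\xi_n^mw\neq0$, so $B_{mb}$ has positive diameter. Every other $B_k$ is the image of some $B_{m'b}$ under a composition of fewer than $b$ single-map steps, so it inherits positive diameter by similarity.
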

In contrast to the results of \cite{CW19}, for a rational $\phi$ the description of the extreme points is given explicitly without the need of many computations. In their theorem, when $\phi\in\mathbb{Q}$, they mention that any single edge of $\conv(A)$ is itself the attractor of an IFS of affine maps. The same result can be recovered from the DGIFS in our theorem by simple substitution. 

The proof of the theorem follows the same argument as that of Theorem \ref{thm:maincplxarbitrary}, we only need to show the details. We begin with a couple of lemmas. Some of the claims in them can be found in \cite{CW19} with possibly different terminology.
\begin{lemma}
\label{lem:rationalphi}
     Assume $\phi=p/q$ with coprime integers $p$ and $q$. Let $b$ such that $bn={\textrm lcm}(n,q)$ and set $d=\gcd(n,q)$. Then
     \begin{enumerate}
         \item for all integers $k$ the integer $j=np/d$ is such that \[(k+b)\phi+\theta_m=k\phi+\theta_{m+j}\] and $b$ is the smallest integer for which the equality holds.
         \item $d$ is the smallest positive integer such that $db\phi\equiv0\bmod1$.
    \end{enumerate}
\end{lemma}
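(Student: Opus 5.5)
Both parts reduce to elementary arithmetic modulo $1$ once the quantities are made explicit. For the polygonal IFS $\Phi_n$ the translations $\xi_n^m w$ are the vertices of a regular $n$-gon. Hence the core lines $\ell_{m+1,m}$ have outward normal directions $\theta_m=\left(m+\tfrac12\right)\alpha+\arg(w)/(2\pi)$, with $\alpha=1/n$. The constant $\arg(w)/(2\pi)$ shifts every $\theta_m$ equally and cancels from both sides of the identity in (1), so I will drop it. I will then write $\theta_{m+j}-\theta_m=j/n$, with the index $m+j$ read modulo $n$; this is consistent because $\theta_{m+n}=\theta_m+1\equiv\theta_m \bmod 1$. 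I will also record that $b=\operatorname{lcm}(n,q)/n=q/d$, so $b$ is an integer and $b\phi=bp/q=p/d$.

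For (1), the required equality $(k+b)\phi+\theta_m\equiv k\phi+\theta_{m+j}\pmod 1$ is, after cancelling $k\phi+\theta_m$, the congruence $b\phi\equiv j/n \pmod 1$. With $j=np/d$, which is an integer since $d\mid n$, we get $j/n=p/d=b\phi$, so the equality holds exactly. For minimality I will show that any $b'>0$ with $b'\phi\equiv j'/n\pmod 1$ for some integer $j'$ is a multiple of $b$. Indeed, $nb'p/q\in\mathbb{Z}$ and $\gcd(p,q)=1$ give $q\mid nb'$. Dividing by $d$ gives $(q/d)\mid (n/d)b'$, and since $\gcd(q/d,n/d)=1$ this yields $b=q/d\mid b'$. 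Hence $b$ is the least positive shift of $k$ that maps the orbit of $\theta_m$ onto the orbit of some $\theta_{m'}$, and the corresponding $m'$ is forced to be $m+np/d \bmod n$.

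For (2), I will compute $cb\phi=cp/d$ for a positive integer $c$. Since $d\mid q$ and $\gcd(p,q)=1$, we have $\gcd(p,d)=1$. So $cp/d\in\mathbb{Z}$ if and only if $d\mid c$, and the smallest such $c$ is $d$; in particular $db\phi=p\equiv 0\pmod 1$.

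No step is really an obstacle. The only point needing care is the bookkeeping of the index modulo $n$ and the constant angle offset from $w$, which must be handled so the congruence is stated correctly.
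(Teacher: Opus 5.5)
Your proof is correct. For part (1) it is the paper's computation ($b=q/d$, $b\phi=p/d=j/n$ with $j=np/d$), but you prove the minimality that the paper only asserts. The paper dismisses a shift $i<b$ by saying $ip/q=t/n$ ``contradicts the minimality of $b$'', which presupposes the claim. Your chain $q\mid nb'$, then $\tfrac{q}{d}\mid\tfrac{n}{d}b'$, then $b\mid b'$ actually establishes it.

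Part (2) is where the routes genuinely differ. The paper uses B\'ezout: from $nb\phi\equiv0$, $qb\phi\equiv0 \pmod 1$ and $d=nx+qy$ it gets $db\phi\equiv0$. For minimality it claims that a smaller $d'$ with $d'b\phi\equiv0$ would have to be a multiple of both $n$ and $q$. That implication is unjustified and, read literally, false; what holds is $d\mid d'$. Your direct evaluation $cb\phi=cp/d$ with $\gcd(p,d)=1$ gives existence and minimality at once. It also identifies the full set of admissible $c$ as $d\mathbb{Z}$.

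The B\'ezout framing has the appeal of tying $d$ to $n$ and $q$ through the subgroup $n\mathbb{Z}+q\mathbb{Z}$. On its own, however, it only yields the inclusion $d\mathbb{Z}\subseteq\{c: cb\phi\equiv0\}$, and the reverse inclusion is exactly your coprimality step. Carrying the offset $\arg(w)/(2\pi)$, which the paper silently takes to be $0$, is a harmless generalization.
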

\begin{proof}
    A common formula that relates the least common multiple of two positive integers with their greatest common divisor is \[\gcd(n,q){\textrm lcm}(n,q)=nq.\] It follows that $b=q/d$. Now if we let $a=n/d$, then $b$ is the least such integer such that $b/q=a/n$.  Consequently, \[b\phi+\theta_m=\frac{bp}{q}+\frac{m}{n}+\frac{1}{2n}=\frac{ap}{n}+\frac{m}{n}+\frac{1}{2n}=\theta_{ap+m}.\] Therefore, $j=ap=np/d$.
    
    Now assume that for some integer $i<b$ there exists some integer $t$ such that $i\phi+\theta_m=\theta_{m+t}$, then it would imply that $ip/q=t/n$ but this contradicts the minimality of $b$. This proves the first claim.

    For the second claim we firstly show that the equivalence is satisfied. Notice that $n$ and $q$ are such that $nb\phi\equiv 0\bmod1$ and $qb\phi\equiv0\bmod1$: for $q$ is trivial since $\phi=p/q$; for $n$ the claim is true by definition of $b$\[nb\phi=nb\frac{p}{q}=n\frac{q}{d}\frac{p}{q}=\frac{n}{d}p\equiv0\bmod1.\] The claimed equivalence is then satisfied since, by definition, $d$ is the smallest integer such that $d=nx+qy$ for integers $x,y$: 
    \[
        db\phi=(nx+qy)b\phi=(nb\phi)x+(qb\phi)y\equiv 0 \bmod 1.
    \]
    Secondly, suppose by contradiction that there is an integer $d'<d$ for which $d'b\phi\equiv0\bmod1$, then it would imply that $d'$ be a multiple of both $n$ and $q$ which is a contradicts the minimality of $d$. 
\end{proof}
\begin{lemma}
\label{lem:irrationalphi}
    Let $N$ be any integer. Then if $\phi\not\in\mathbb{Q}$
    \begin{enumerate}
        \item there is no integer $k$ for which $k\phi+\theta_m=N\phi$;
        \item there exists exactly one pair of integer $k$ and $m$ for which $k\phi+\theta_m=N\phi+\alpha/2$.
        \item there exists exactly one pair of integers $k$ and $j$ such that $k\phi+\theta_m=N\phi+\theta_{m+j}$.
    \end{enumerate}
\end{lemma}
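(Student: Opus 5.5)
The plan is to reduce all three claims to one elementary fact: if $\phi\notin\mathbb{Q}$ and $r\in\mathbb{Z}$ with $r\phi\in\mathbb{Q}$ (in particular $r\phi\equiv c\bmod 1$ for a rational $c$), then $r=0$.

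Throughout I work modulo $1$, as the angles live on $\mathbb{S}^1\cong[0,1]$. I take indices $m$ modulo $n$ and, for uniqueness, $m,j\in\{0,\ldots,n-1\}$. In the polygonal setting $\alpha=1/n$, and as in the proof of Lemma~\ref{lem:rationalphi},
\[\theta_m=\left(m+\tfrac12\right)\alpha=\frac{2m+1}{2n}.\]
So every $\theta_m$, and every difference of such angles, is rational.

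\textbf{Claim (1).} Suppose $k\phi+\theta_m\equiv N\phi\bmod 1$. Then $(N-k)\phi\equiv\theta_m$, which is rational, so the key fact forces $k=N$. It then remains that $\theta_m\equiv 0\bmod 1$, i.e. $(2m+1)/(2n)\in\mathbb{Z}$. This is impossible because $2m+1$ is odd and $2n$ is even. Hence no such $k$ exists.

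\textbf{Claim (2).} The equation $k\phi+\theta_m\equiv N\phi+\alpha/2$ gives $(k-N)\phi\equiv \alpha/2-\theta_m=-m/n$. Again the right side is rational, so $k=N$, and then $m/n\in\mathbb{Z}$, i.e. $m\equiv0\bmod n$. Conversely $(k,m)=(N,0)$ is a solution, since $\theta_0=\alpha/2$. So the pair exists and is unique.

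\textbf{Claim (3).} The equation $k\phi+\theta_m\equiv N\phi+\theta_{m+j}$ gives $(k-N)\phi\equiv \theta_{m+j}-\theta_m=j/n$. The key fact gives $k=N$ and then $j\equiv 0\bmod n$. The pair $(k,j)=(N,0)$ clearly works, so it is the unique solution, independently of $m$.

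There is no genuine obstacle here. The only point requiring care is the bookkeeping: reading the equalities modulo $1$, restricting indices to a residue system modulo $n$, and noting that the parity argument $(2m+1)/(2n)\notin\mathbb{Z}$ is exactly what rules out claim (1). These facts are what feed the irrational case of Theorem~\ref{thm:maincplxpolygon}. By (3), no rotated bounding-line direction ever returns to a direction of the same family, so no edge of $\conv(A)$ is an iterate of a bounding edge in the periodic fashion required for finitely many sides or for convexity. This mirrors the last paragraph of the proof of Theorem~\ref{thm:maincplxarbitrary}.
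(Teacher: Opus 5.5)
Your proof is correct and follows the paper's argument. Each equation reduces to an integer multiple of the irrational $\phi$ equalling a rational quantity ($\theta_m$, $m\alpha$, or $j\alpha$), so the multiple must be zero and the rational side must vanish; you are slightly more careful than the paper in reading the equalities modulo $1$, fixing indices in a residue system modulo $n$, and explicitly excluding $k=N$ in (1) via $(2m+1)/(2n)\notin\mathbb{Z}$, where the paper assumes $k<N$ and only appeals to $\theta_m\neq 0$.
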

\begin{proof}
    For part (1) we will prove the contrapositive: assume that there exists positive $k<N$ such that $k\phi+\theta_m=N\phi$, then rearranging the sides of the equation we have \[(N-k)\phi=\theta_m.\] The expression on the right is rational and never $0$ and $(N-k)$ is an integer, thus $\phi$ must be rational. 

    We will prove part (2) directly:
    if $N$ is any integer then 
    \[k\phi+m\alpha+\frac{\alpha}{2}=N\phi +\frac{\alpha}{2}\implies (N-k)\phi=m\alpha\]
    which is true only when both sides are $0$, since $\phi$ is irrational and $\alpha$ is rational. Therefore, $k=N$ and $m=0$.

    The argument is similar for part (3):
    \[ k\phi+m\alpha+\frac{\alpha}{2}=N\phi+(m+j)\alpha+\frac{\alpha}{2}\implies(k-N)\phi=j\alpha\]
    which is true only when $k=N$ and $j=0$, since $\phi$ is irrational and $\alpha$ is rational.

\end{proof}

These lemmas are essentially describing the behaviour of any point $z$ on some bounding line when multiplied by $\lambda$, that is they show that $\Theta$ is indeed the one stated in Theorem \ref{thm:maincplxpolygon}.

\begin{proof}[Proof of Theorem \ref{thm:maincplxpolygon}]
    The following two facts are properties of the attractor $A$ relatively simple to prove:
    \begin{itemize}
        \item the attractor $A$ and, consequently, $\conv(A)$ have rotational symmetry of order $n$ about the origin; 
        \item the attractor $A$ is the union of $f_j(A)$ for $0\leq j\leq n$ which are copies of $A$ scaled by $\lvert\lambda\rvert$, rotated by $\arg(\lambda)$, and translated relative to each other by $\xi_n^j-\xi_n^k$.
    \end{itemize}
    Because of the rotational symmetry there are necessarily extreme points on the bounding line at angle $\theta_m=(m+1/2)\alpha$ for each $0\leq m<n$. The second fact implies that, for fixed $m$, the extreme points on the bounding line at angle $\theta_m$ must be the image of other extreme points under the maps $f_m$ and $f_{m+1}$.

    Let $\phi=p/q$ for coprime integers $p$ and $q$. Following the argument in Theorem \ref{thm:maincplxarbitrary} denote by $B_j$ the set of extreme points on each line at angles from $\Theta$: $B_0$ corresponds to set of extreme points on $\widehat{\ell}_{1,0}$ which is at angle $0\phi+\theta_0$; $B_1$ is a subset of $f_k(\widehat{\ell}_{1,0})$ which is at angle $1\phi+\theta_0\in I_k$ for some $k$; $B_b$ corresponds to set of extreme points on $\widehat{\ell}_{2,1}$ which is at angle $0\phi+\theta_1$; and so on up until $B_{nb}$ which is at angle $(b-1)\phi+\theta_{m-1}$. From Lemma \ref{lem:rationalphi} we know that $B_{mb-np/d}$ is the only set of extreme points that maps to $B_{mb}$ with the maps $f_m$ and $f_{m+1}$, that is \[B_{mb}=f_m(B_{mb-np/d})\cup f_{m+1}(B_{mb-np/d}).\]
    Now, if there were an integer $mb<k<(m+1)b$ then we claim that $B_k$ is necessarily the image of $B_{k-np/d}$ under $f_{m+1}$: since the interval $I_{m+1}=[\theta_m,\theta_{m+1}]$ is centered around $(m+1)\alpha$, then $B_k$ must be a subset of $f_{m+1}(A)$, i.e. it must be the image under $f_{m+1}$ of some subset of $A$; by Lemma \ref{lem:rationalphi} such set is $B_{k-pn/d}$, so \[B_k=f_{m+1}(B_{k-np/d}).\]

    If the attractor $A$ is convex, then each $B_k$ must be a segment of positive length entirely contained in $A$. We have just shown that the extreme points in $B_{mb}$ are the union of images of extreme points under $f_m$ and $f_{m+1}$. Moreover, each $B_{k}$ for $mb<k<(m+1)b$ is the image under $f_{m+1}$ of $B_{k-np/d}$. It follows that \[B_{mb-\frac{np}{d}}=f_{m+1}(B_{mb-2\frac{np}{d}})=f_{m+1}^2(B_{mb-3\frac{np}{d}})=\cdots=f_{m+1}^{b-1}(B_{mb-b\frac{np}{d}})=f_{m+1}^{b-1}(B_{mb})\] where the last equality holds because $nbp/d=nb^2\phi\equiv0\bmod1$. Therefore, we have \[\lvert f_{m}(B_{mb-\frac{np}{d}})\rvert=\lvert f_{m+1}(B_{mb-\frac{np}{d}})\rvert=\lvert \lambda\rvert\lvert B_{mb-\frac{np}{d}}\rvert\text{ and }\lvert B_{mb-\frac{np}{d}}\rvert=\lvert\lambda\rvert^{b-1}\lvert B_{mb}\rvert\] which implies that the segment $B_{mb}$ is contained in $A$ if $2\lvert \lambda\rvert^b\geq1$.
\end{proof}

\begin{corollary}
    With the assumptions of Lemma \ref{lem:rationalphi} the directed graph associated to the DGIFS in Theorem \ref{thm:maincplxpolygon} has $n/gcd(n,q)$ connected components each of length $b\gcd(n,q)$. Moreover, each connected components is strongly connected.
\end{corollary}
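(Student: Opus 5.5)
The plan is to read the directed graph directly off the system of equations in Theorem~\ref{thm:maincplxpolygon}. Its vertices are the sets $B_0,\ldots,B_{nb-1}$, with indices taken in $\mathbb{Z}/nb\mathbb{Z}$.

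First I would describe the edges. In every equation of the system, whether of the first type (for $B_{mb}$) or the second type (for $B_k$ with $mb<k<(m+1)b$), the right-hand side involves only the single set $B_{k-np/d}$. The first type has two parallel edges, labelled $m$ and $m+1$, but parallel edges do not affect connectivity. So, ignoring multiplicities and labels, the graph is the functional graph of the translation
\[\sigma:\mathbb{Z}/nb\mathbb{Z}\to\mathbb{Z}/nb\mathbb{Z},\qquad \sigma(k)=k-np/d.\]
Every vertex has exactly one out-neighbour and exactly one in-neighbour, so $\sigma$ is a bijection.

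Next I would identify the components. Because $\sigma$ is a translation of a finite cyclic group, its orbits are exactly the cosets of the subgroup generated by $np/d$. The number of cosets is $g:=\gcd(np/d,\,nb)$, and each orbit has $nb/g$ elements. Write $a=n/d$, so that $b=q/d$ (as in the proof of Lemma~\ref{lem:rationalphi}) and hence $nb=aq$ and $np/d=ap$. Then
\[g=\gcd(ap,aq)=a\gcd(p,q)=a=\frac{n}{\gcd(n,q)},\]
since $p$ and $q$ are coprime. Each component therefore has $nb/g=aq/a=q=b\gcd(n,q)$ vertices, which is the claimed length.

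Finally, strong connectivity follows because each orbit of a bijection on a finite set is a single directed cycle $k\to\sigma(k)\to\sigma^2(k)\to\cdots\to k$. Any vertex of a component can thus reach any other along the cycle. Adding parallel edges keeps each component a cycle and cannot merge different components, since every edge joins $k$ to $\sigma(k)$, which lies in the same coset.

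There is no real obstacle. The only points needing care are the observation that both types of equations point to the same index shift $np/d$, and the gcd computation $\gcd(ap,aq)=a$.
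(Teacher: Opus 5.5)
Your proof is correct and is essentially the paper's argument: the graph is a disjoint union of directed cycles of a common length, so the number of components is $nb$ divided by that length, and each component is strongly connected because it is a cycle. The only difference is in how the cycle length is obtained. You compute it directly as the orbit size $q=b\gcd(n,q)$ of translation by $np/d$ in $\mathbb{Z}/nb\mathbb{Z}$, using $\gcd(ap,aq)=a$, whereas the paper reads it off from part (2) of Lemma~\ref{lem:rationalphi}; your computation is the more self-contained of the two.
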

\begin{proof}
    The second claim of Lemma \ref{lem:rationalphi} shows that for a fixed $m$ there is a cyclic path of length $db$, with $d=\gcd(n,q)$, passing through distinct vertices of the graph. Considering that there are $nb$ total vertices, then the number of cycles is $nb/db=n/d$. 

    Each connected component is strongly connected because it is cyclic.
\end{proof}

\begin{remark}
    We included the corollary since if the graph of the DGIFS is composed by strongly connected components and the DGIFS satisfies the open set condition,  then it possible to calculate \cite{MW88,B89b} the similarity, and consequently, the Hausdorff dimension of the vector of attractors. However, this is beyond the scope of this article. 
\end{remark}

\subsection{Examples}
\label{subsect:complexpolygonalexamples}
For each of the following examples we fix $n=3$ and $w=1$.
\subsubsection{Sierpi\'nski Triangle}
As a first example we consider the famous Sierpi\'nski Triangle. The fractal can be obtained by setting, for instance, $\lambda=1/2$ or $\lambda=\frac{1}{2}\exp(2\pi\I/3)$. In either case, $\ext(A)$ is evidently the union of three segments which are the sides of an equilateral triangle. However, the itinerary of $x\in\ext(A)$ is different accroding to which $\lambda$ we use: when $\lambda=1/2$ its argument is $\phi=0$ so $p=0$ and $q=1$, then by Theorem \ref{thm:maincplxpolygon} $\ext(A)$ is described by the DGIFS on the left in Figure \ref{fig:SierpisnkiDigraph};
%\[\begin{cases}
%     B_0=f_0(B_0)\cup f_1(B_0)&\\
%     B_1=f_1(B_1)\cup f_2(B_1)&\\
%     B_2=f_2(B_2)\cup f_0(B_2)&
% \end{cases};\]
the other case $\phi=p/q=1/3$ is shown on the right of the same figure.
% \[\begin{cases}
%     B_0=f_0(B_2)\cup f_1(B_2)&\\
%     B_1=f_1(B_0)\cup f_2(B_0)&\\
%     B_2=f_2(B_1)\cup f_0(B_1)&
% \end{cases}.\]
\begin{figure}
    \centering
    \begin{tikzpicture}
    
    \node [draw, circle] (0+60) at (0+60:2) (s0) {\small $B_0$};
    \node [draw, circle] (120+60) at (120+60:2) (s1) {\small $B_1$};
    \node [draw, circle] (240+60) at (240+60:2) (s2) {\small $B_2$};
        
    \path (s0) edge [loop right] node {$0$} (s0);
    \path (s0) edge [loop left] node {$1$} (s0);
    \path (s1) edge [loop right] node {$1$} (s1);
    \path (s1) edge [loop left] node {$2$} (s1);
    \path (s2) edge [loop right] node {$0$} (s2);
    \path (s2) edge [loop left] node {$2$} (s2);
    
\end{tikzpicture}
    \hspace{1cm}
    \begin{tikzpicture}
    
    \node [draw, circle] (0+60) at (0+60:2) (s0) {\small $B_0$};
    \node [draw, circle] (120+60) at (120+60:2) (s1) {\small $B_1$};
    \node [draw, circle] (240+60) at (240+60:2) (s2) {\small $B_2$};
        
    \path[->] (s0) edge [bend right=25, midway, right] node {$1$} (s2);
    \path[->] (s0) edge [bend left=25, midway, right] node {$0$} (s2);
    \path[->] (s1) edge [bend right=25, midway, above] node {$2$} (s0);
    \path[->] (s1) edge [bend left=25, midway, above] node {$1$} (s0);
    \path[->] (s2) edge [bend right=25, midway, below] node {$0$} (s1);
    \path[->] (s2) edge [bend left=25, midway, below] node {$2$} (s1);
    
\end{tikzpicture} 
    \caption{The two DGIFS describing the boundary of the convex hull of the Sierpinski Triangle when the maps have either no rotation (left) or rotation by a third of a turn (right).}
    \label{fig:SierpisnkiDigraph}
\end{figure}
\begin{figure}
    \centering
    \includegraphics[width=0.3\linewidth]{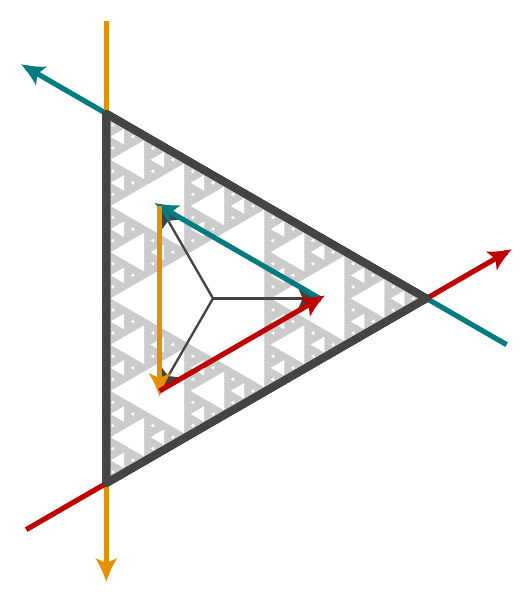}
    \caption{The Sierpi\'nski Triangle with its extreme points (the thick black segments) and bounding set. Also indicated are the segments of the core lines that form the convex hull of the translation vectors. }
    \label{fig:Sierpinski}
\end{figure}
We remark that in both cases $b=1$ and $\lvert\lambda\rvert=2^{-1/b}$ but the attractor is not convex, however the sets $B_j$ are intervals. The attractor is convex when $|\lambda|=2^{-1/2}>0.5$.

\subsubsection{Sierpi\'nski relatives}
In the second example we consider a sequence of parameters $\lambda=r\exp(2\pi\I/6)$, where $0<r<1$. First we apply the Theorem \ref{thm:maincplxpolygon} to find the DGIFS which is valid for all values of $r$: $\phi=p/q=1/6$ so there are $nb={\textrm lcm}(n,q)=6$ sides to the convex hull which are described by the DGIFS in Figure \ref{fig:SierpisnkiRelDigraph}.
% \[\begin{cases}
%     B_0=f_0(B_5)\cup f_1(B_5)&\\
%     B_1=f_1(B_0)&\\
%     B_2=f_1(B_1)\cup f_2(B_1)&\\
%     B_3=f_2(B_2)&\\
%     B_4=f_2(B_3)\cup f_0(B_3)&\\
%     B_5=f_0(B_4)&
% \end{cases}.\]
\begin{figure}[!htb]
    \centering
    \begin{tikzpicture}
    
    \node [draw, circle] (60) at (60:2) (s0) {\small $B_0$};
    \node [draw, circle] (120) at (120:2) (s1) {\small $B_1$};
    \node [draw, circle] (180) at (180:2) (s2) {\small $B_2$};
    \node [draw, circle] (240) at (240:2) (s3) {\small $B_3$};
    \node [draw, circle] (300) at (300:2) (s4) {\small $B_4$};
    \node [draw, circle] (0) at (0:2) (s5) {\small $B_5$};

    \path[->] (s0) edge [bend left,above] node {$0$} (s5);
    \path[->] (s0) edge [bend right,left] node {$1$} (s5);
    %\path[->] (s1) edge [double, midway, left] node {$0,1$} (s6);
    \path[->] (s2) edge [bend left, left] node {$1$} (s1);
    \path[->] (s2) edge [bend right, right] node {$2$} (s1);
    \path[->] (s3) edge [midway,left] node {$2$} (s2);
    %\path[->] (s3) edge [double, midway, right] node {$1,2$} (s2);
    \path[->] (s4) edge [bend left, below] node {$2$} (s3);
    \path[->] (s4) edge [bend right, above] node {$0$} (s3);
    \path[->] (s5) edge [midway, right] node {$0$} (s4);
    %\path[->] (s5) edge [double, midway, above] node {$0,2$} (s4);
    \path[->] (s1) edge [midway, above] node {$1$} (s0);
\end{tikzpicture}
    \caption{The DGIFS describing the set of external points of the Sierpinski relative.}
    \label{fig:SierpisnkiRelDigraph}
\end{figure}

When $r=0.5$ the attractor is a relative of the Sierpi\'nski Triangle \cite{TR18}, the sides of the convex hull are therefore Cantor sets. We can obtain the IFS generating each side by decoupling the system of equations for $B_0$, $B_2$, and $B_3$: for example,
\[ B_0=f_0(B_5)\cup f_1(B_5)=f_{00}(B_4)\cup f_{10}(B_4)=\cdots=\bigcup_{\omega\in S} f_\omega(B_0)\]
where \[S=\{000211,000221,002211,002221,100211,100221,102211,102221\}.\]
In other words, the extreme points in $B_0$ are the points $z\in A$ whose itinerary is $\omega=w_0w_1\cdots$ where each $w_k\in S$.

By increasing $r$ eventually we will reach a value for which the Cantor set $B_0$ becomes an interval: such value is $r=2^{-1/b}=2^{-1/2}$. The attractor, in this case, is convex.
\begin{figure}[!htb]
    \centering
    \includegraphics[width=0.3\linewidth]{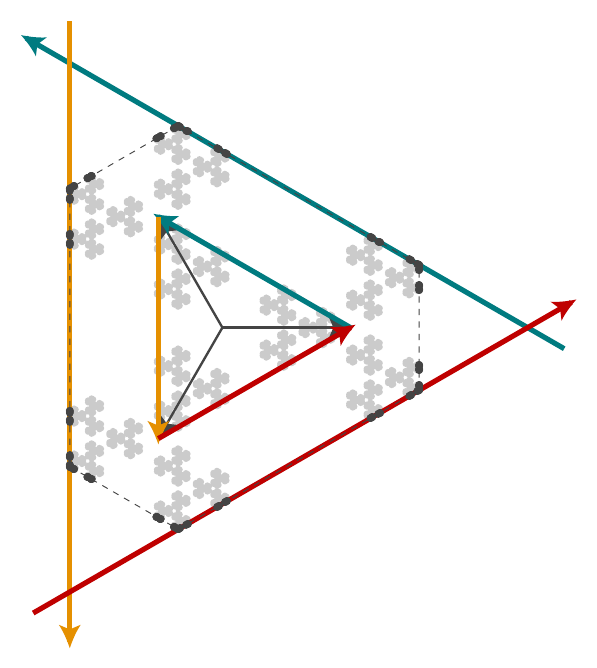}%
    \includegraphics[width=0.3\linewidth]{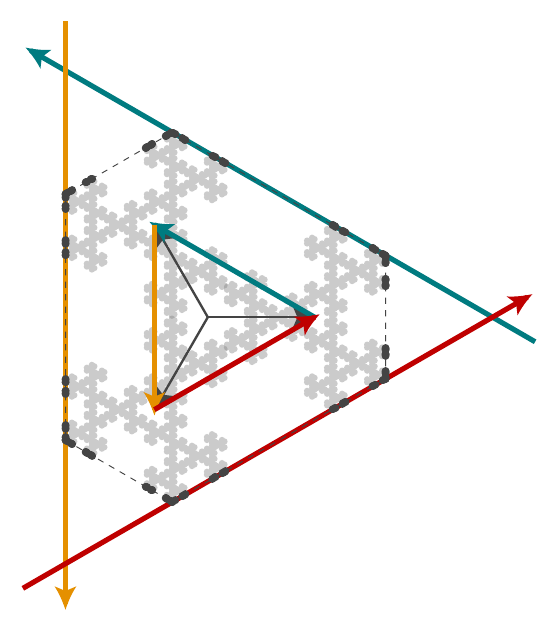}
    \includegraphics[width=0.33\linewidth]{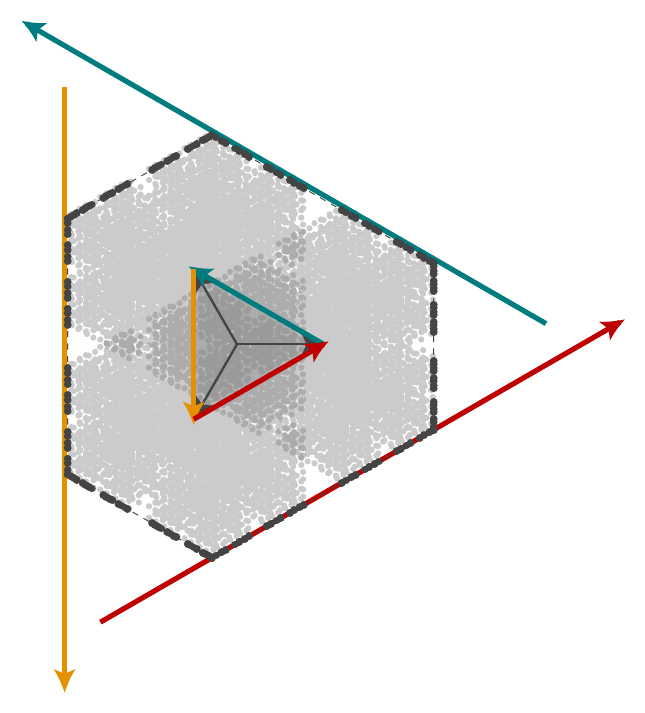}%
    \includegraphics[width=0.3\linewidth]{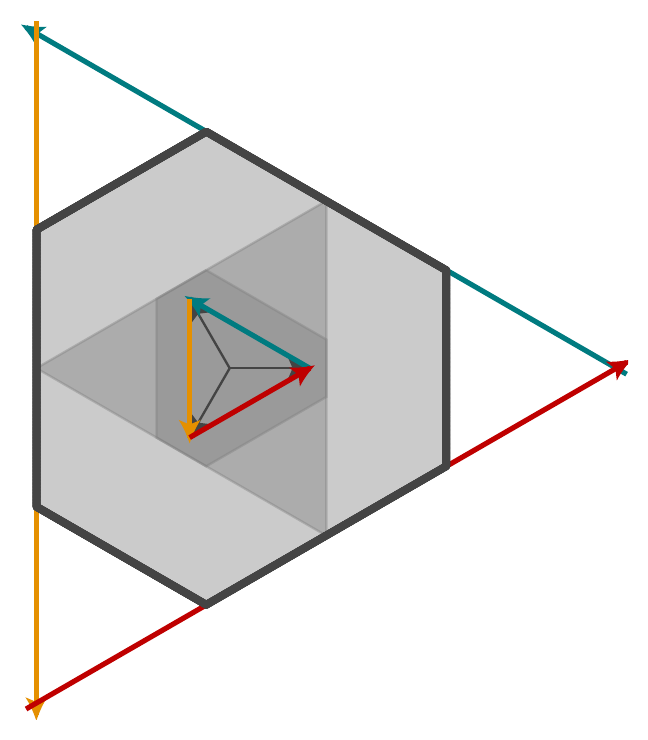}
    \caption{The extreme points, the convex hull, and bounding set of the Sierpi\'nksi relatives. From top left to bottom right, in order are the attractors when $\lvert\lambda\rvert=0.45$, $0.5$, $0.65$, and $2^{-1/2}$.}
    \label{fig:SierpisnkiRelatives}
\end{figure}

\subsubsection{Terdragon}
In the last example we consider a fractal which is known to tile the plane \cite{BG94}, called \emph{Terdragon} or, sometimes, \emph{Fudgeflake} \cite{Mandelbrot83}. Let $\lambda=2^{-1}\left(1+3^{-1/2}\I\right)$, one of the roots of $1-3x+3x^2$, then $\lvert\lambda\rvert=3^{-1/2}$ and $\phi=p/q=1/12$. By Theorem \ref{thm:maincplxpolygon} the convex hull is a polygon with $nb={\textrm lcm}(n,q)=12$ sides and $\ext(A)$ is described by the DGIFS illustrated in Figure \ref{fig:Terdragon}.
\begin{figure}[!htb]
    \centering
    \includegraphics[width=0.35\linewidth]{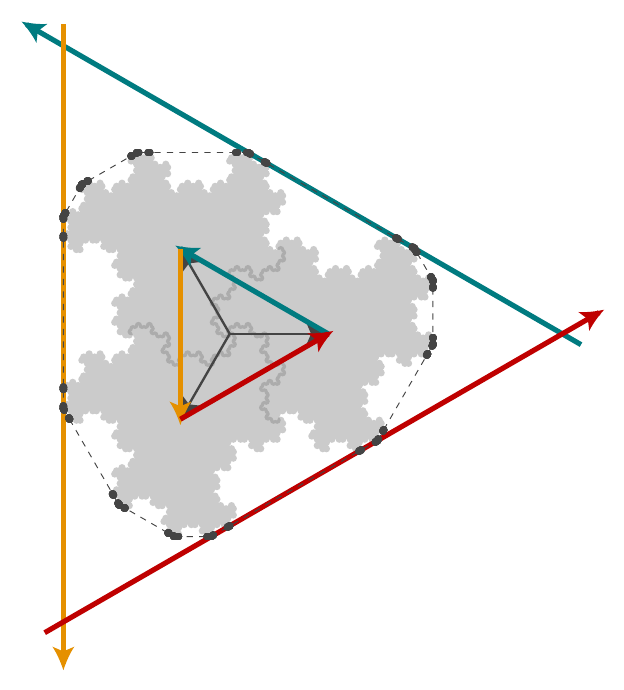}
    \hspace{1cm}
    \begin{tikzpicture}
    
    \node [draw, circle] (0+60) at (0+60:4) (s0) {\small $B_{0}$};
    \node [draw, circle] (30+60) at (30+60:4) (s1) {\small $B_1$};
    \node [draw, circle] (60+60) at (60+60:4) (s2) {\small $B_2$};
    \node [draw, circle] (90+60) at (90+60:4) (s3) {\small $B_3$};
    \node [draw, circle] (120+60) at (120+60:4) (s4) {\small $B_4$};
    \node [draw, circle] (150+60) at (150+60:4) (s5) {\small $B_5$};
    \node [draw, circle] (180+60) at (180+60:4) (s6) {\small $B_6$};
    \node [draw, circle] (210+60) at (210+60:4) (s7) {\small $B_7$};
    \node [draw, circle] (240+60) at (240+60:4) (s8) {\small $B_8$};
    \node [draw, circle] (270+60) at (270+60:4) (s9) {\small $B_9$};
    \node [draw, circle] (300+60) at (300+60:4) (s10) {\small $B_{10}$};
    \node [draw, circle] (330+60) at (330+60:4) (s11) {\small $B_{11}$};
    
    \path[->] (s0) edge [bend right,left] node {$1$} (s11);
    \path[->] (s0) edge [bend left,right] node {$0$} (s11);
    %\draw[->,double,thick] (s0)--(s11) node[midway, below left]{\small $0,1$};
    \draw[->,thick] (s1)--(s0) node[midway, above] {\small $1$};
    \draw[->,thick] (s2)--(s1) node[midway, above] {\small $1$};
    \draw[->,thick] (s3)--(s2) node[midway, above] {\small $1$};
    \path[->] (s4) edge [bend right,right] node {$2$} (s3);
    \path[->] (s4) edge [bend left,left] node {$1$} (s3);
    %\draw[->,double,thick] (s4)--(s3) node[midway, right] {\small $1,2$};
    \draw[->,thick] (s5)--(s4) node[midway, left] {\small $2$};
    \draw[->,thick] (s6)--(s5) node[midway, left] {\small $2$};
    \draw[->,thick] (s7)--(s6) node[midway, below left] {\small $2$};
    \path[->] (s8) edge [bend right,above] node {$0$} (s7);
    \path[->] (s8) edge [bend left,below] node {$2$} (s7);
    %\draw[->,double,thick] (s8)--(s7) node[midway, above] {\small $2,0$};
    \draw[->,thick] (s9)--(s8) node[midway, right] {\small $0$};
    \draw[->,thick] (s10)--(s9) node[midway, right] {\small $0$};
    \draw[->,thick] (s11)--(s10) node[midway, right] {\small $0$};
    
\end{tikzpicture}
    \caption{The extreme points, the convex hull, and bounding set of the Terdragon/Fudgeflake. On the right the DGIFS for $\ext(A)$.}
    \label{fig:Terdragon}
\end{figure}

\subsection{Collinear IFS}
\label{subsect:complexcollinear}
Fix $n\geq2$ and $\lambda\in\mathbb{D}\setminus\{0\}$. We consider an IFS with $n$ transformations but whose translations are along a line. For ease of explanation we choose to parametrize the translation terms but, in fact, the choice does not affect the results as long as the terms are collinear. Set a fixed non-zero $w\in\mathbb{C}$ and let $\Psi_{n}=\{f_k\}_{k=0}^{n-1}$ where \[f_k(z)=\lambda z+(n-2k-1)w.\]

This type of IFS has been studied most recently by \cite{EJS24}. However, we have not found anything in the literature about the convex hull or extreme points of the attractor for this family of IFS.

\begin{remark}
    The argument $\alpha$ of the translation $(n-2k-1)w$ is simply the argument of $w$, which does not depend on $n$. Therefore, there are essentially only two core lines $\ell_{0,1}$ and $\ell_{1,0}$ which are at supplementary angles and have opposite directions: for $m=0,1$ we have $\theta_m=(m+1/2)/2+\alpha$.
\end{remark}

\begin{theorem}
\label{thm:maincplxcollinear}
    Let $\phi$ be such that $\arg(\lambda)=2\pi\phi$.
    \begin{itemize}
        \item If $\phi=p/q$ with coprime integers $p$ and $q$, let $b$ be such that $2b={\textrm lcm}(2,q)$ and set $d=\gcd(2,q)$. Define \[\Theta=\left\{k\phi+\left(m+\frac{1}{2}\right)\frac{1}{2}+\alpha\right\}_{k=0,m=0}^{k=b-1,m=1}\]
        Then:
        \begin{enumerate}
            \item the convex hull of $A$ is a polygon with $2b$ sides at the angles of $\Theta$.
            \item The set of extreme points of $A$ is the unique vector of non-empty compact sets $\{B_j\}_{j=0}^{2b-1}$ which solves the following system of equations: for $m=0$ and $m=1$ 
            \[\begin{cases}
                B_{mb}=\bigcup_{j=0}^{n-1} f_{j}(B_{mb-2p/d}) &\\
                B_k=f_{n-1+m}(B_{k-2p/d}) &
            \end{cases}\]
            where $mb< k< (m+1)b$ (the indices of the sets are to be considered modulo $2b$, while those of the maps modulo $n$).
            \item If the attractor $A$ is convex, then $\lvert\lambda\rvert\geq n^{-1/b}$ and the system of equations identifies the itineraries of the points on $\partial A$.
        \end{enumerate}
        \item If $\phi\not\in\mathbb{Q}$ then 
        \begin{enumerate}
            \item the convex hull is not a polygon.
            \item The attractor $A$ is not convex.
        \end{enumerate}
    \end{itemize}
\end{theorem}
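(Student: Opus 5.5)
The plan is to mirror the proof of Theorem \ref{thm:maincplxpolygon}, with the regular $n$-gon of translations replaced by the degenerate ``2-gon'' formed by the collinear translations. By Proposition \ref{prop:boundinglines}(iii), $\conv(\{(n-2k-1)w\})$ is the segment from $-(n-1)w$ to $(n-1)w$. There are exactly two core lines, both containing that segment, with outward normals $\theta_0,\theta_1$ as in the preceding Remark; these differ by $1/2$. All $n$ translations lie on this common line, so every $f_j$ attains the support function in both directions $\theta_0$ and $\theta_1$. Hence the dominance regions split the circle into two open half-circles. On one of them only the map with the extreme translation $(n-1)w$, namely $f_0$, dominates. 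On the other, only the map with translation $-(n-1)w$, namely $f_{n-1}$, dominates. This is why the indices $f_{n-1+m}$ (mod $n$) appear in the statement. In addition, $A=-A$ because the translation set is symmetric, so $\conv(A)$ has a central symmetry that plays the role of the $n$-fold rotational symmetry used earlier.

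Next comes the arithmetic of the orbit $\Theta$, obtained by repeating Lemma \ref{lem:rationalphi} with $n$ replaced by $2$. With $\phi=p/q$, $d=\gcd(2,q)$ and $b=q/d$, the number $b$ is the least positive integer with $b\phi\in\frac12\mathbb{Z}$. Then $b\phi+\theta_m=\theta_{m+2p/d}$, and by part (2) of that lemma the combined orbit of $\theta_0,\theta_1$ has exactly $2b$ elements. By Lemma \ref{lem:ThetasetConvA}, these $2b$ angles are the outward normals of the sides of $\conv(A)$, which gives item (1).

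For item (2), let $B_{mb+k}$ be the extreme points on the side at angle $k\phi+\theta_m$. By Theorem \ref{thm:extreme}, each extreme point is the image of an extreme point under some map that dominates in that direction. On the bounding line at $\theta_m$, all $n$ maps dominate, and the preimage side must be at angle $\theta_m-\phi$. That side is $B_{mb-2p/d}$, so $B_{mb}=\bigcup_j f_j(B_{mb-2p/d})$. For $mb<k<(m+1)b$, the angle $k\phi+\theta_m$ lies strictly inside a half-circle dominated by a single map, so $B_k=f_{n-1+m}(B_{k-2p/d})$. The main obstacle is the bookkeeping here: one must check that the open arc from $\theta_m$ to $\theta_{m+1}$ is dominated by $f_{n-1+m}$ with the right orientation convention, and that the intermediate angles never hit $\theta_0$ or $\theta_1$. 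The second point follows from the minimality of $b$. For the first, I would compute $\langle\theta,(n-2k-1)w\rangle$ directly. Uniqueness of the solution vector follows from the DGIFS existence and uniqueness theorem \cite{Edgar1990}.

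For item (3), assume $A$ is convex, so each $B_j$ is a segment. Chaining the single-map equations gives $|B_{mb-2p/d}|=|\lambda|^{b-1}|B_{mb}|$. The $n$ images $f_j(B_{mb-2p/d})$ are translates by equally spaced amounts along the side, and together they must cover $B_{mb}$. This forces $n|\lambda|^b\ge1$. Here the equal spacing gives the factor $n$, compared with $2$ in Theorem \ref{thm:maincplxarbitrary}.

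For irrational $\phi$, adapt Lemma \ref{lem:irrationalphi}: the orbit $\Theta$ is infinite (and in fact dense), so $\conv(A)$ is not a polygon. Then the final argument of Theorem \ref{thm:maincplxarbitrary} applies. If $A$ were convex, $B_0$ would be a segment of positive length, but it would have to be a union of images of a side at angle $\theta_0-\phi$, and that angle is not in $\Theta$ by the irrationality. This contradiction shows $A$ is not convex.
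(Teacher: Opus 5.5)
Your overall route is the paper's. You mirror Theorem~\ref{thm:maincplxpolygon} and observe that all $n$ maps dominate at $\theta_0,\theta_1$, while only $f_{n-1}$ and $f_0$ dominate the open arcs $(\theta_0,\theta_1)$ and $(\theta_1,\theta_0+1)$. You then get $n\lvert\lambda\rvert^b\ge1$ by covering a bounding edge with $n$ copies.

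The step that fails is the bookkeeping you yourself flagged. You set $B_{mb+k}$ to be the side at angle $k\phi+\theta_m$, $0\le k<b$ (orbit order). With that convention the preimage side of $B_{mb}$ is at $\theta_m-\phi=(b-1)\phi+\theta_{m-2p/d}$, i.e.\ it is $B_{(m-2p/d)b+b-1}$, not $B_{mb-2p/d}$. For $mb<k<(m+1)b$ the preimage of $B_k$ is $B_{k-1}$. Its angle $(k-mb)\phi+\theta_m$ can lie in either open half-circle, so the dominating map need not be $f_{n-1+m}$.

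Concretely, take $\phi=1/3$, so $b=3$ and $2p/d=2$. Your $B_2$ sits at $\theta_0+\tfrac23$, in the arc where only $f_0$ dominates, and $B_2=f_0(B_1)$; the system instead asserts $B_2=f_{n-1}(B_0)$. Likewise the system's $B_0=\bigcup_j f_j(B_4)$ would use your side at $\theta_1+\tfrac13$ rather than the side at $\theta_0-\tfrac13$. Minimality of $b$ only keeps intermediate angles away from $\theta_0,\theta_1$; it does not put them in the arc that the equation assumes.

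The repair is to index sides by counterclockwise position. This is the reading implicit in the paper's proof, where $B_k$ for $mb<k<(m+1)b$ are the sides at angles between $\theta_m$ and $\theta_{m+1}$. Since $\frac1q\mathbb{Z}+\frac12\mathbb{Z}=\frac1{2b}\mathbb{Z}$, one has $\Theta=\{\theta_0+i/(2b)\}_{i=0}^{2b-1}$. Let $B_i$ be the extreme points on the side with outward normal $\theta_0+i/(2b)$. Then:
\begin{enumerate}
\item $B_{mb}$ lies on the bounding line at $\theta_m$;
\item the indices $mb<k<(m+1)b$ are exactly the sides with normals in the open arc $(\theta_m,\theta_{m+1})$, where $f_{n-1+m}$ alone dominates;
\item since $\phi=\frac{2p/d}{2b}$, the preimage side of $B_i$ is $B_{i-2p/d}$.
\end{enumerate}
This yields both lines of the system.

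Your length chain in (3) must be redone in the same indexing. Step back from $B_{mb-2p/d}$ by $2p/d$ at a time. Because $\gcd(2p/d,b)=1$, you first reach an index divisible by $b$ after $b-1$ single-map steps, landing on $B_{mb}$ or $B_{(m+1)b}$. These have equal length because $A=-A$. Hence $\lvert B_{mb-2p/d}\rvert=\lvert\lambda\rvert^{b-1}\lvert B_{mb}\rvert$, and covering gives $n\lvert\lambda\rvert^b\ge1$.
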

The lemmas of the previous section still apply in this setting but need some adjustments. 
\begin{lemma}
\label{lem:rationalphi_collinear}
     Assume $\phi=p/q$ with coprime integers $p$ and $q$. Let $b$ such that $2b={\textrm lcm}(2,q)$ and set $d=\gcd(2,q)$. Then
     \begin{enumerate}
         \item for all integers $k$ the integer $j=2p/d$ is such that \[(k+b)\phi+\theta_m=k\phi+\theta_{m+j}\] and $b$ is the smallest integer for which the equality holds.
         \item $d$ is the smallest positive integer such that $db\phi\equiv0\mod1$.
    \end{enumerate}
\end{lemma}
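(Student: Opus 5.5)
The plan is to specialize the argument of Lemma~\ref{lem:rationalphi} to the case $n=2$, since the collinear IFS has exactly two core lines. Their outward normal directions are $\theta_m=\tfrac12\left(m+\tfrac12\right)+\alpha$ for $m=0,1$, with the index $m$ read modulo $2$. All angle identities will be understood modulo $1$. The first step is to make $b$ and $d$ explicit. From $\gcd(2,q)\,\mathrm{lcm}(2,q)=2q$ we get $b=q/d$. Concretely, $d=2$ and $b=q/2$ when $q$ is even, and $d=1$ and $b=q$ when $q$ is odd.

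For part (1), I will compute $b\phi=\frac{q}{d}\cdot\frac{p}{q}=\frac{p}{d}=\frac12\cdot\frac{2p}{d}$. Since $\theta_{m+j}=\theta_m+\frac{j}{2}$, taking $j=2p/d$ gives $(k+b)\phi+\theta_m=k\phi+\theta_{m+j}$ for every integer $k$. This shift is legitimate because the indices are taken modulo $2$ and the angles modulo $1$, and the residue $\frac{j}{2}\bmod 1$ depends only on $j \bmod 2$.

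For minimality, suppose some $0<i<b$ and some integer $t$ satisfy $i\phi+\theta_m\equiv\theta_{m+t}$. Then $ip/q\equiv t/2\pmod 1$, so $2ip/q$ is an integer. Since $\gcd(p,q)=1$, this forces $q\mid 2i$. Writing $q=d\cdot(q/d)$ and $2=d\cdot(2/d)$, where $q/d$ and $2/d$ are coprime, we obtain $q/d\mid i$, i.e.\ $b\mid i$. This contradicts $0<i<b$.

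For part (2), we have $db\phi=q\phi=p\equiv0 \pmod 1$. For minimality, suppose $0<d'<d$ and $d'b\phi=d'p/d\in\mathbb{Z}$. This case can only arise when $d=2$, forcing $d'=1$. But then $q$ is even, so $p$ is odd, and $p/2\notin\mathbb{Z}$, a contradiction. When $d=1$ there is nothing to check.

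The argument is entirely routine. The only point requiring care, and the one I would expect to be the main subtlety, is the divisibility step $q\mid 2i\Rightarrow b\mid i$ in the minimality claim. This step depends on splitting into the cases $q$ odd and $q$ even. A secondary point is to consistently interpret indices modulo $2$ and angles modulo $1$, so that $\theta_{m+j}$ is well defined even when $j=2p/d$ exceeds $1$.
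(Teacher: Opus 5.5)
Your proof is correct and follows the same route as the paper. Both compute $b=q/d$ and $b\phi=p/d=\tfrac12\cdot\tfrac{2p}{d}$, which gives a shift by $j=2p/d$ in the core-line index, and then check the two minimality claims. The paper only says the rest of the argument ``is equivalent to Lemma~\ref{lem:rationalphi}'', whose minimality steps are loose: for part (2) it argues via B\'ezout and the claim that $d'$ would have to be a multiple of both $n$ and $q$. Your explicit steps are the more careful version of the same argument, namely $q\mid 2i\Rightarrow b\mid i$, and $db\phi=q\phi=p$ together with the $d=2$, $p$ odd check.
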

\begin{proof}
    Given that \[\gcd(2,q){\textrm lcm}(2,q)=2q\] then $b=q/d$. Setting $a=2/d$, $b$ is the least integer such that $b/q=a/2$ and thus,
    \[b\phi+\theta_m=\frac{bp}{q}+\frac{m\alpha}{2}+\frac{\alpha}{4}=\frac{ap}{2}+\frac{m\alpha}{2}+\frac{\alpha}{4}=\theta_{m+ap}.\] It follows that $j=ap=2p/d$.

    The rest of the argument is equivalent to Lemma \ref{lem:rationalphi}.
\end{proof}
\begin{lemma}
    Let $N$ be any integer. Then if $\phi\not\in\mathbb{Q}$
    \begin{enumerate}
        \item there is no integer $k$ for which $k\phi+\theta_m=N\phi$;
        \item there exist exactly one integers $k$ and one integer $j$ such that $k\phi+\theta_m=N\phi+\theta_{m+j}$.
    \end{enumerate}
\end{lemma}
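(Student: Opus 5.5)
We argue exactly as in Lemma \ref{lem:irrationalphi}, replacing the polygonal angles $\theta_m=(m+\tfrac12)\alpha$ by the collinear ones $\theta_m=(m+\tfrac12)\tfrac12+\alpha$ for $m=0,1$ (indices mod $2$). The idea is that an identity between integer multiples of $\phi$ and these angles leaves a residual equation of the form (integer)$\cdot\phi=$ (rational) or (integer)$\cdot\phi=$ (rational) $+\alpha$. Because $\phi$ is irrational, such equations force most terms to vanish.

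For part (1), suppose $k\phi+\theta_m=N\phi \pmod 1$ for some integer $k$. Rearranging gives
\[(N-k)\phi=\frac{2m+1}{4}+\alpha \pmod 1.\]
If $\alpha\in\mathbb{Q}$, the right-hand side is rational. It is also nonzero modulo $\tfrac12$ shifts, since $\tfrac{2m+1}{4}\notin\tfrac12\mathbb{Z}$. When $N\neq k$, irrationality of $\phi$ gives an immediate contradiction. When $N=k$, we would need $\tfrac{2m+1}{4}+\alpha\equiv0$, which depends on $\alpha$.

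This dependence on $\alpha$ is the main obstacle. In the polygonal case $\alpha$ was a rational number not producing zero, whereas here $\alpha=\arg(w)/2\pi$ is arbitrary. The first thing to try is to normalize: since the statement is rotation-invariant (conjugating the IFS by a rotation), we may assume $w$ is real and positive, so $\alpha=0$. Then $\theta_m=\tfrac{2m+1}{4}$ is a nonzero rational, and the argument of Lemma \ref{lem:irrationalphi}(1) applies verbatim. If instead the comparison is meant with $N\phi$ shifted by $\alpha$ (the natural normalization of the line at angle $N\phi$), the $\alpha$ terms cancel and the same conclusion follows without normalization. I would state this normalization explicitly at the start.

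For part (2), expand $k\phi+\theta_m=N\phi+\theta_{m+j}$. The $\alpha$ terms and the $\tfrac14$ terms cancel, leaving
\[(k-N)\phi=\frac{j}{2}\pmod 1.\]
The right-hand side is rational and $\phi$ is irrational, so both sides must vanish modulo $1$. Hence $k=N$ and $j\equiv0 \pmod 2$, so $\theta_{m+j}=\theta_m$ because indices are taken mod $2$. This gives the unique pair. In particular, no rotated bounding line returns to a bounding-line direction, which is what is needed for the irrational part of Theorem \ref{thm:maincplxcollinear}, just as in the proof of Theorem \ref{thm:maincplxarbitrary}.
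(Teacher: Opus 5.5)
The gap is the normalization $\alpha=0$ in part (1). The equation $k\phi+\theta_m=N\phi$ is \emph{not} invariant under conjugating the IFS by a rotation through angle $2\pi\beta$. Such a conjugation leaves $\lambda$ unchanged, so $\phi$ and $N\phi$ stay fixed, but it rotates $w$, so $\alpha\mapsto\alpha+\beta$ and every $\theta_m\mapsto\theta_m+\beta$.

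The claim therefore genuinely depends on $\alpha$, and for some $\alpha$ it is false. If $\alpha\notin\mathbb{Q}$, choose $\phi=\theta_0=\tfrac14+\alpha$, which is irrational; then $m=0$, $k=1$, $N=2$ satisfy $k\phi+\theta_0=N\phi$. If $w$ is vertical ($\alpha\equiv\tfrac14$ or $\tfrac34$), one of the $\theta_m$ is $\equiv 0\pmod 1$, so $k=N$ is a solution. That is exactly the $N=k$ case you flagged. Your normalization appears to prove (1) for every $\alpha$, but (1) fails for these $\alpha$, so the normalization cannot be valid.

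What can actually be proved is what the paper does in its first case. Take $k\neq N$ (the paper assumes $k<N$) and $\alpha\in\mathbb{Q}$. Then $(N-k)\phi\equiv\tfrac{2m+1}{4}+\alpha$ is rational, contradicting $\phi\notin\mathbb{Q}$. Your alternative reading, with $N\phi+\alpha$ on the right-hand side, is also provable by your computation, but it is a different statement.

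The paper's own treatment of $\alpha\notin\mathbb{Q}$ only checks the special value $\phi=\alpha$. It does not rule out, for example, $\phi=\alpha+\tfrac14$, so it has the same hole. Part (1) needs an explicit restriction, either rational $\alpha$ with $k\neq N$, or the shifted formulation. You should state that restriction rather than try to remove $\alpha$ by a symmetry that is not there.

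Your part (2) is correct and is the paper's argument. Reducing to $(k-N)\phi\equiv j/2\pmod 1$ and concluding $k=N$ with $j$ even is, if anything, more careful than the paper's ``$j=0$''.
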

\begin{proof}
    Assume that there exists a positive integer $k<N$ such that $k\phi+\theta_m=N\phi$ for some integer $N>0$, then \[(N-k)\phi=\theta_m=\left(m+\frac{1}{2}\right)\frac{1}{2}+\alpha.\]
    If $\alpha\in\mathbb{Q}$, then the above equation implies that $\phi$ is rational as well, which is a contradiction. If $\alpha\not\in\mathbb{Q}$, the equation has no solution even if $\phi=\alpha$.

    For part (2), 
    \[k\phi+\left(m+\frac{1}{2}\right)\frac{1}{2}+\alpha=N\phi+\left(m+j+\frac{1}{2}\right)\frac{1}{2}+\alpha\implies (N-k)\phi=-\frac{j}{2}\] which is true only when $k=N$ and $j=0$, since $\phi$ is irrational.
\end{proof}

\begin{proof}[Proof of Theorem \ref{thm:maincplxcollinear}]
    The argument is analogous to that of Theorem \ref{thm:maincplxpolygon} and uses the above two lemmas. The main difference, is that now we there is a translational symmetry rather than a rotational one. The only two bounding lines, which are at angles $\theta_m$ for $m=0,1$, connect extreme points which belong to $f_j(A)$ for all $0\leq j\leq n-1$. Therefore, if $\phi=p/q$ for coprime integers $p$ and $q$ and we let $B_{mb}$ be the set of extreme points on the bounding lines, then for $m=0,1$ \[B_{mb}=\bigcup_{k=0}^{n-1}f_k(B_{mb-2p/d}).\]
    By a similar argument to the one of Theorem \ref{thm:maincplxpolygon}, if $A$ is convex, then each $B_k$ is an interval of positive length and the above equation with the fact that each $v_k$ is equidistant from $v_{k+1}$ imply that $n\lvert\lambda\rvert^b>1$.
    
    Moreover, because $f_0(A)$ and $f_{n-1}(A)$ are the outer most copies of the attractor, the extreme points along line at any angles between $\theta_m$ and $\theta_{m+1}$ have to be in either $f_0(A)$ or $f_{n-1}(A)$. It follows that if $\phi=p/q$ then \[B_k=f_{n-1+m}(B_{k-2p/d})\] for $m=0,1$ and $mb<k<(m+1)b$.
\end{proof}

\begin{corollary}
    With the assumptions of Lemma \ref{lem:rationalphi_collinear} the directed graph associated to the DGIFS in Theorem \ref{thm:maincplxcollinear} has $2/gcd(2,q)$ connected components each of length $b\gcd(2,q)$. Moreover, each connected components is strongly connected.
\end{corollary}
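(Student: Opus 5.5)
The plan is to mirror the proof of the corresponding corollary for polygonal IFS, with $n$ replaced by $2$. The directed graph of the DGIFS in Theorem \ref{thm:maincplxcollinear} has $2b$ vertices $B_0,\ldots,B_{2b-1}$. Every equation of the system has the form $B_k=\bigcup f_\bullet(B_{k-2p/d})$, with indices taken modulo $2b$. So every arrow out of $B_k$, whatever its label, points to the single vertex $B_{k-2p/d}$. The graph, with multiple arrows collapsed, is therefore the functional graph of the translation $\sigma:k\mapsto k-2p/d$ on $\mathbb{Z}/2b\mathbb{Z}$. The whole statement reduces to the cycle structure of this permutation.

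The next step is to compute the order of $\sigma$. Its orbits all have length $2b/\gcd(2b,2p/d)$. I will compute this gcd using $b=q/d$ and $\gcd(p,q)=1$, separating the cases $d=1$ ($q$ odd) and $d=2$ ($q$ even).

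When $q$ is odd, $b=q$ and the shift is $2p$. Since $\gcd(2p,2q)=2$, there are $2$ orbits of length $q=b\gcd(2,q)$.

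When $q$ is even, $b=q/2$ and the shift is $p$, which is odd and coprime to $q=2b$. So there is a single orbit of length $2b=b\gcd(2,q)$.

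In both cases the graph splits into $2/\gcd(2,q)$ components, each of length $b\gcd(2,q)$. Alternatively, I can quote part (2) of Lemma \ref{lem:rationalphi_collinear}: $d$ is minimal with $db\phi\equiv0$, so the path $B_k\to B_{k-2p/d}\to\cdots$ first closes after $db$ steps, and the count $2b/(db)=2/d$ follows.

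Finally, each component is a single orbit of a permutation, hence a directed cycle, and a directed cycle is strongly connected. The only real obstacle is the bookkeeping of the index arithmetic. The care needed is to use the shift $2p/d$ modulo $2b$ rather than the rotation angle, and to check that when $q$ is even the shift $p$ is indeed coprime to $2b=q$.
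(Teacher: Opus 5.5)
Your proposal is correct and is essentially the paper's argument; the paper just defers to the polygonal corollary. In both, the graph is a disjoint union of directed cycles coming from the index shift, the number of components is $2b$ divided by the common cycle length $db=q$, and each component is strongly connected because it is a cycle. The only difference is how the cycle length is obtained: you compute it directly as $2b/\gcd(2b,2p/d)$ with a parity split on $q$, whereas the paper reads it off from the minimality of $d$ in part (2) of Lemma \ref{lem:rationalphi_collinear}. Your version is slightly more self-contained, because that minimality is only over multiples of $b$, so one still needs (e.g.\ from part (1), or from $db=q$) that no earlier return occurs; this also applies to the alternative route you mention.
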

Once again the proof follows the same logic of Corollary 3.2.

\subsection{Examples}
\label{subsect:complexcollinearexamples}
In what follows we will consider other fractals that tile the plane, for more information on them we refer the reader to \cite{BG94}. For all the examples we assume that $w=1$.

\subsubsection{Wild Dragons}
Let $\lambda$ be a root of $1-2x+n x^2$ where $n$ is the number of functions of the IFS, then 
\[\lambda=\frac{1\pm\I\sqrt{n-1}}{n}\implies\lvert\lambda\rvert=n^{-1/2},\quad\arg(\lambda)=\arctan\left(\pm\sqrt{n-1}\right).\]
The attractor of the IFS $\Psi_n$ with these parameters are called \emph{wild $n$-dragons}. Their convex hull is a polygon only when $n=2$ and $n=4$, since only in those cases the argument of $\lambda$ is a rational multiple of $2\pi$. 

When $n=2$, $\lambda=2^{-1}(1+\I)$ and the attractor is the famous Twin Dragon that we mentioned in the introduction. Since $\phi=1/8$, Theorem \ref{thm:maincplxcollinear} tells us that the convex hull is an octagon and $\ext(A)$ is the union of Cantor sets which are related according to the directed graph in Figure \ref{fig:TwinDragon}.
%\[\begin{cases}
%     B_0=f_0(B_7)\cup f_1(B_7)& \\
%     B_1=f_1(B_0)& \\
%     B_2=f_1(B_1)& \\
%     B_3=f_1(B_2)& \\
%     B_4=f_0(B_3)\cup f_1(B_3)& \\
%     B_5=f_0(B_4)& \\
%     B_6=f_0(B_5)& \\
%     B_7=f_0(B_6)& 
% \end{cases}.\]
\begin{figure}
    \centering
    % \includegraphics[width=0.4\linewidth]{TwinDragon_hull.pdf}
    % \hspace{0.7cm}
    \begin{tikzpicture}
    
    \node [draw, circle] (90) at (90:2) (s0) {\small $B_0$};
    \node [draw, circle] (135) at (135:2) (s1) {\small $B_1$};
    \node [draw, circle] (180) at (180:2) (s2) {\small $B_2$};
    \node [draw, circle] (225) at (225:2) (s3) {\small $B_3$};
    \node [draw, circle] (270) at (270:2) (s4) {\small $B_4$};
    \node [draw, circle] (315) at (315:2) (s5) {\small $B_5$};
    \node [draw, circle] (0) at (0:2) (s6) {\small $B_6$};
    \node [draw, circle] (45) at (45:2) (s7) {\small $B_7$};
        
    \path[->] (s0) edge [bend left, above] node {$0$} (s7);
    \path[->] (s0) edge [bend right, below] node {$1$} (s7);
    \path[->] (s1) edge [midway, above] node {$1$} (s0);
    \path[->] (s2) edge [midway, left] node {$1$} (s1);
    \path[->] (s3) edge [midway, left] node {$1$} (s2);
    \path[->] (s4) edge [bend right, above] node {$0$} (s3);
    \path[->] (s4) edge [bend left, below] node {$1$} (s3);
    \path[->] (s5) edge [midway, below] node {$0$} (s4);
    \path[->] (s6) edge [midway, right] node {$0$} (s5);
    \path[->] (s7) edge [midway, right] node {$0$} (s6);
    
\end{tikzpicture}
    \caption{The sofic system describing the extreme points of the Twin dragon.}
    \label{fig:TwinDragon}
\end{figure}

When $n=4$, $\lambda=4^{-1}\left(1+\I\sqrt{3}\right)$ which implies $\phi=1/6$. By Theorem \ref{thm:maincplxcollinear} the convex hull is an hexagon and the extreme points are described by the sofic system in Figure \ref{fig:Wild4Dragon}.
% \[\begin{cases}
%     B_0=f_0(B_5)\cup f_1(B_5)\cup f_2(B_5)\cup f_3(B_5)& \\
%     B_1=f_3(B_0)& \\
%     B_2=f_3(B_1)& \\
%     B_3=f_0(B_2)\cup f_1(B_2)\cup f_2(B_2)\cup f_3(B_2)& \\
%     B_4=f_0(B_3)& \\
%     B_5=f_0(B_4)& 
% \end{cases}.\]
\begin{figure}
    \centering
    \includegraphics[width=0.5\linewidth]{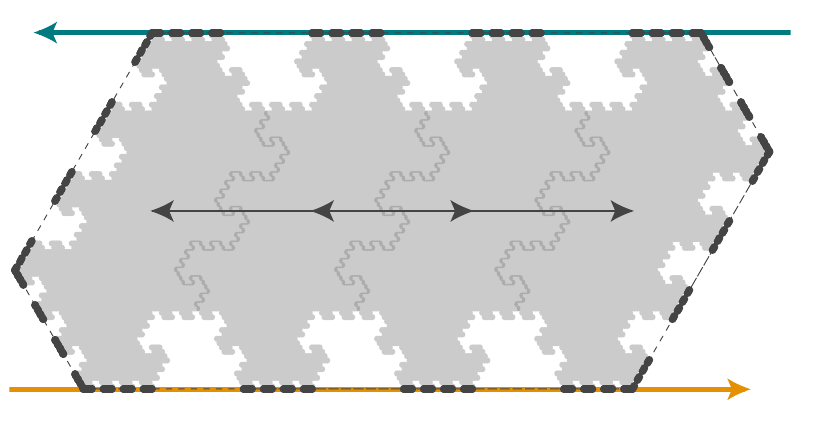}
    % \hspace{0.7cm}
    \begin{tikzpicture}
    
    \node [draw, circle] (60+15) at (60+15:2) (s0) {\small $B_0$};
    \node [draw, circle] (120+15) at (120+15:2) (s1) {\small $B_1$};
    \node [draw, circle] (180+15) at (180+15:2) (s2) {\small $B_2$};
    \node [draw, circle] (240+15) at (240+15:2) (s3) {\small $B_3$};
    \node [draw, circle] (300+15) at (300+15:2) (s4) {\small $B_4$};
    \node [draw, circle] (0+15) at (0+15:2) (s5) {\small $B_5$};
        
    \path[->] (s0) edge [bend left=60, above] node {$0$} (s5);
    \path[->] (s0) edge [bend left=20, above] node {$1$} (s5);
    \path[->] (s0) edge [bend right=20, below] node {$2$} (s5);
    \path[->] (s0) edge [bend right=60, below] node {$3$} (s5);
    \path[->] (s1) edge [midway, above] node {$3$} (s0);
    \path[->] (s2) edge [midway, left] node {$3$} (s1);
    \path[->] (s3) edge [bend right=60, above] node {$0$} (s2);
    \path[->] (s3) edge [bend right=20, above] node {$1$} (s2);
    \path[->] (s3) edge [bend left=20, below] node {$2$} (s2);
    \path[->] (s3) edge [bend left=60, below] node {$3$} (s2);
    \path[->] (s4) edge [midway, below] node {$0$} (s3);
    \path[->] (s5) edge [midway, right] node {$0$} (s4);
    
\end{tikzpicture}
    \hspace{0.7cm}
    \caption{The extreme points, the convex hull, and bounding set of the Wild $4$-Dragon. On the right the associated sofic system for $\ext(A)$.}
    \label{fig:Wild4Dragon}
\end{figure}

\subsubsection{Tame Dragons}
Let $\lambda$ be a root of $1-x+n x^2$ where $n$ is the number of functions of the IFS, then 
\[\lambda=\frac{1\pm\I\sqrt{4n-1}}{n}\implies\lvert\lambda\rvert=n^{-1/2},\quad\arg(\lambda)=\arctan\left(\pm\sqrt{4n-1}\right).\]
For every $n\ge2$ the argument of $\lambda$ is irrational. Therefore, the convex hull does not have finitely many sides and so we can only approximate it. We illustrate how approximations of $\phi$ produce subsets of $A$, which approximate $\ext(A)$. 

Fix $n=3$. Good rational approximations of an irrational number are obtained by truncating its infinite continued fraction. Therefore, since $\phi=[0;4,1,10,1,101,1,\ldots]$ the first three approximations are $[0;4]=1/4$, $[0;4,1]=1/5$, and $[0;4,1,10]=11/54$. We show in Figure \ref{fig:TameDragons} which points in the attractor are described by the resulting DGIFS assuming one of those approximations. Note that if $\phi$ were in fact equal to those rational number, then the convex hull of the attractor would be a polygon with $4$, $10$, or $54$ sides.
\begin{figure}
    \centering
    \includegraphics[width=0.4\linewidth]{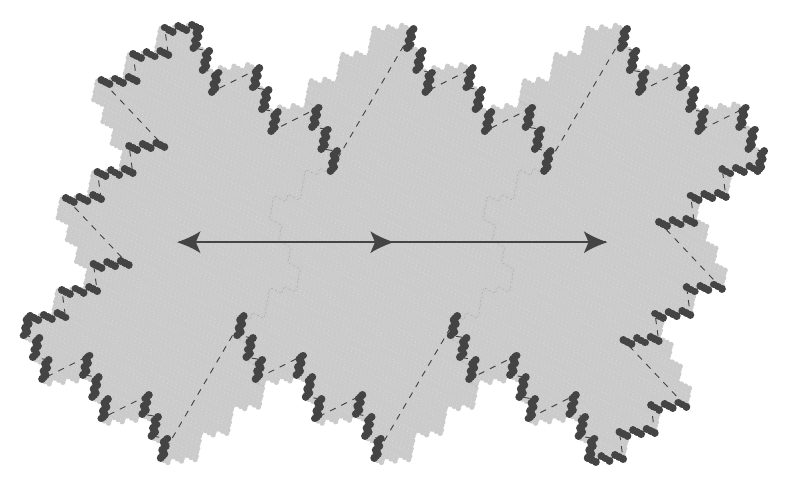}
    % \hspace{1cm}
    \includegraphics[width=0.4\linewidth]{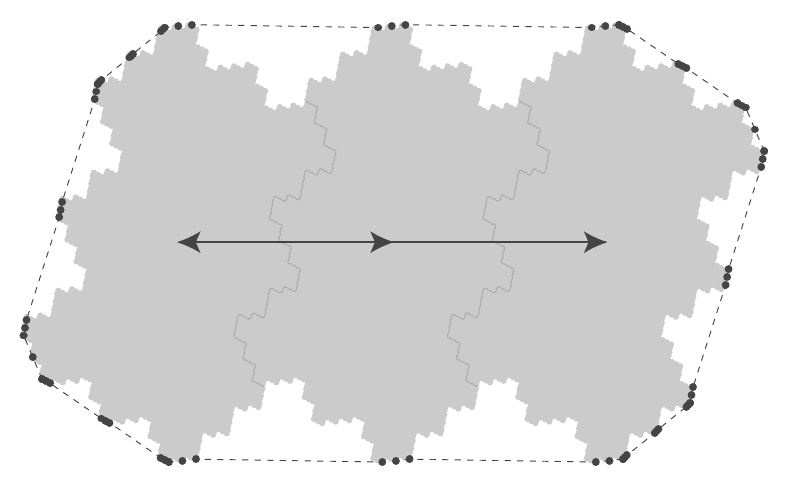}%
    \includegraphics[width=0.4\linewidth]{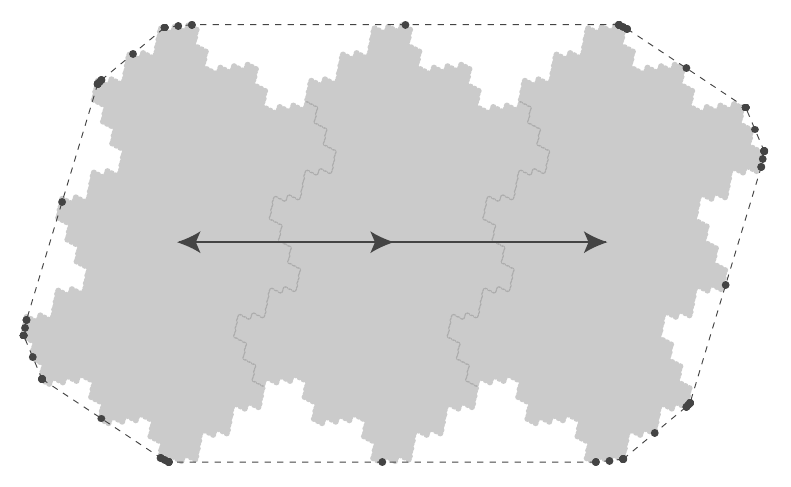}%
    \caption{The Tame $3$-Dragon and the points described by a DGIFS obtained by approximating the irrational $\phi=(2\pi)^{-1}\arctan\left(\sqrt{11}\right)$ with $1/4$, $1/5$, and $11/54$.}
    \label{fig:TameDragons}
\end{figure}

\section{The real case}
\label{sect:real}
The second family of IFS that we will consider is the case when the affine transformations have a contracting matrix of the form $T=\begin{pmatrix}\mu&0\\0&\eta\end{pmatrix}$ for non-zero real $-1<\mu,\eta<1$.

Due to the absence of a rotational component and the differing scaling factors in $T$, the equations governing the DGIFS are sensitive to the choice of translations. Thanks to Proposition \ref{prop:extremeconvexcore}, however, we are guaranteed to find a sofic system on the code space whose projection is $\ext(A)$. We first find such system for two specific families of translations to build intuition and, then, generalize it to arbitrary translations. The first family of IFS is the collinear one, where the translation vectors lie along a given line; this is closely connected to $\beta$-expansions \cite{S94,G08,HS17,T17}, an active area of research. The second family is that of Bedford-McMullen IFS, whose attractors have been studied since they were introduced independently by Bedford \cite{B84} and McMullen \cite{M84}. This last family can be easily viewed as the arbitrary translation case when both $\mu$ and $\eta$ are positive.

In contrast to the results of Section \ref{sect:complex}, a particular consequence of what we will discover is that the itinerary of corner points are not necessarily periodic but can be pre-periodic.

\subsection{Collinear IFS}
\label{subsect:realcollinear}
Fix $n\geq2$ and let $\vec{w}$ be any vector in $\mathbb{R}^2$ excluding those along the axes. Consider the IFS $\Psi_n=\{F_k\}_{k=0}^{n-1}$ generated by the maps \[F_k(\vec{x})=T\vec{x}+(n-2k-1)\vec{w}\quad 0\leq k\leq n-1\]
and denote with $A$ the attractor of such an IFS. Note that we discarded vectors $w$ along the axes in order to avoid dealing with the degenerate cases: the attractor in these cases is contained along the axes so its convex hull is an interval whose endpoints are the fixed points of $F_0$ and $F_{n-1}$.

Note that the itineraries of the extreme points of the attractor for the IFS $\Psi_2$ have been identified in \cite{HS16, HS17} when $0<\mu<\eta$, and \cite{HS16, R25} when $\mu<0<\eta$ with $\lvert\mu\rvert<\eta$ using analytical arguments. We propose a new geometrical proof for the same results and extend them to the more general case of $n\geq2$ maps.
\begin{theorem}
\label{thm:mainrealcollinear}
    The set of extreme points of the attractor associated to the IFS $\Psi_n$ is the unique vector solution to the following system of equations
    \begin{itemize}
        \item if $\mu<0<\eta$ with $\lvert\mu\rvert<\eta$,
        \[\begin{cases}
            B_0=F_0(B_0)&\\
            B_1=F_{n-1}(B_1)&\\
            B_2=\bigcup_{k=0}^{n-2}F_k(B_0) \cup F_{n-1}(B_3)&\\
            B_3=F_0(B_2) \cup \bigcup_{k=1}^{n-1}F_k(B_1)&
        \end{cases};\]
        \item if $\mu<0<\eta$ with $\lvert\mu\rvert=\eta$,
        \[\begin{cases}
            B_0=\bigcup_{k=0}^{n-1}F_k(B_1)&\\
            B_1=F_{n-1}(B_0)&\\
            B_2=\bigcup_{k=0}^{n-1}F_k(B_3)&\\
            B_3=F_{0}(B_2)&
        \end{cases}\]
        the convex hull is a quadrilateral. If the attractor is convex, then $\eta\geq n^{-1/2}$ and the each $B_k$ is an edge of $\partial A$;
        \item if $0<\mu<\eta$, 
        \[\begin{cases}
            B_0=F_0(B_0)&\\
            B_1=\bigcup_{k=0}^{n-2}F_k(B_0) \cup F_{n-1}(B_1)&\\
            B_2=F_{0}(B_2)\cup\bigcup_{k=1}^{n-1}F_k(B_3) &\\
            B_3=F_{n-1}(B_3)&
        \end{cases}.\]
        \item if $0<\mu=\eta$, the attractor is either a Cantor set ($\eta<1/n$) or an interval ($\eta\geq1/n$), so the convex hull is an interval along the line spanned by $\vec{w}$ whose endpoint are the fixed points of $F_0$ and $F_{n-1}$.
    \end{itemize}
\end{theorem}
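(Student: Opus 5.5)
We work with support functions throughout. Since all maps share $T=\operatorname{diag}(\mu,\eta)$ and the translations $\vec v_k=(n-2k-1)\vec w$ are collinear, Proposition~\ref{prop:boundinglines}(iii) gives exactly two core lines, both parallel to $\vec w$, with normals $\pm\theta_*$ where $\theta_*\perp\vec w$. The dominance regions are then two closed half-circles: $D_0$, where $\langle\theta,\vec w\rangle\ge0$, and $D_{n-1}$, where $\langle\theta,\vec w\rangle\le0$. All other maps $F_k$, $0<k<n-1$, dominate only at $\pm\theta_*$. Hence every direction strictly inside $D_0$ is supported only by $F_0(A)$, every direction strictly inside $D_{n-1}$ only by $F_{n-1}(A)$, and at $\pm\theta_*$ all $F_k(A)$ touch the supporting line.

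Next we track directions. An extreme point $\vec x\in F_k(A)$ with outward normal $\theta$ is $F_k(\vec y)$, where $\vec y\in\ext(A)$ has normal $T^{\mathrm T}\theta=T\theta$ (normalised), by the computation $h_{F_k(A)}(\theta)=h_A(T\theta)+\langle\theta,\vec v_k\rangle$. So the relevant dynamics is the projective action $\theta\mapsto T\theta/|T\theta|$ on $\mathbb S^1$, and we study the orbit of $\pm\theta_*$ under it, case by case.

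\emph{(a) $0<\mu<\eta$.} The action has attracting fixed points $\pm e_2$ and repelling $\pm e_1$, and it preserves quadrants. The normal $T\theta_*$ moves toward $e_2$ and stays in the same quadrant, hence on the same side. We obtain four vertices of the graph. $B_0$ is the corner at direction $e_2$-type, fixed by $F_0$ (its itinerary is $0^\infty$). $B_3$ is the corner fixed by $F_{n-1}$. $B_1$ and $B_2$ are the extreme points on the two bounding lines. For $B_1$, the point on $\widehat\ell$ coming from $F_k(A)$ has preimage normal $T\theta_*$, which lies strictly inside a dominance region. The extreme points there are exactly the corner images, and the outermost copy $F_{n-1}$ sees the bounding-line face again. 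This gives $B_1=\bigcup_{k\le n-2}F_k(B_0)\cup F_{n-1}(B_1)$. I expect the decisive point is to show that $\conv(A)$ is a quadrilateral, i.e.\ that every direction strictly between $\theta_*$ and the $e_2$-side is supported at the single corner $\pi(0^\infty)$. This follows because for those $\theta$ the iterates $T^m\theta$ never leave $D_0^\circ$, so the supporting point has itinerary $0^\infty$. The symmetric argument handles $B_2,B_3$.

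\emph{(b) $\mu<0<\eta$, $|\mu|<\eta$.} Now $T$ flips the sign of the first coordinate, so $T\theta_*$ crosses to the other side of $\vec w^\perp$ and lands in the opposite dominance region. The corners $B_0,B_1$ are again fixed points of $F_0,F_{n-1}$. They are stable because $e_2$ is attracting and the quadrant alternation keeps the orbits of interior directions inside one region; I would check this by following $T^m\theta$ as it converges to $\pm e_2$. The bounding-line sets get the cross terms $F_{n-1}(B_3)$ and $F_0(B_2)$ from the side-switch.

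\emph{(c) $|\mu|=\eta$.} Here $T=\eta R$ with $R$ a reflection, so $T^2=\eta^2I$ and the direction dynamics has period 2. This yields four sides forming two 2-cycles, following the method of Theorem~\ref{thm:maincplxpolygon} with $b=2$. The convexity bound comes from the length count $|B_0|=n\eta|B_1|=n\eta^2|B_0|$ being necessary for coverage, i.e.\ $n\eta^2\ge1$.

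\emph{(d) $\mu=\eta$.} Here $A$ lies on a line, and the standard interval criterion $n\eta\ge1$ applies.

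The main obstacle is the direction bookkeeping in (a) and (b): we must prove that no directions beyond those listed carry new edges. I would handle this with the monotonicity of $\theta\mapsto T\theta$ on quadrants.
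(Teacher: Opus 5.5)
Your setup (half-circle dominance regions, normals pulled back by $\theta\mapsto T\theta$) is sound, and your cases (c) and (d) match the paper. The gap is in (a), and it is a false step, not a missing detail. For $0<\mu<\eta$ the convex hull is \emph{not} a quadrilateral, so the ``decisive point'' you plan to prove cannot be proved.

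Your reason that $T^m\theta$ never leaves $D_0^\circ$ holds on the part of $D_0$ running from one endpoint through the $e_2$-axis direction to the $e_1$-axis direction. It fails on the arc between the $e_1$-axis direction and the other endpoint. There the orbit is pushed toward the $e_2$-axis and crosses that endpoint into $D_{n-1}$.

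Push your own support-function formula through. The supporting point in direction $\theta$ has digit $0$ or $n-1$ in position $j$ according to the sign of $\langle T^j\theta,\vec w\rangle=\mu^j\theta_1w_1+\eta^j\theta_2w_2$, with a free digit exactly where this vanishes. In case (a) this sign changes at most once, but it does change on those arcs. Hence every normal $T^{-m}(\pm\theta_*)$, $m\geq 0$, carries an edge, with itineraries $(n-1)^m x\,0^\infty$ resp.\ $0^m x\,(n-1)^\infty$, and these edges shrink to the two corners.

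For example, take $n=2$, $\vec w=(1,1)$, $\mu=0.3$, $\eta=0.6$ and $q=\pi(0^\infty)=(\tfrac{10}{7},\tfrac52)$. Both $F_1(q)$ and $F_1^2(q)$ attain $h_A((-1,\tfrac12))=\tfrac{23}{28}$, so $[F_1(q),F_1^2(q)]$ is an edge lying on neither bounding line.

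These chains of edges are exactly what the recursive terms $F_{n-1}(B_1)$ and $F_0(B_2)$ encode. Your reading of $B_1,B_2$ as ``the extreme points on the two bounding lines'' is incompatible with those terms. A bounding line meets $\ext(A)$ only in the $n$ points $F_k(\pi(0^\infty))$ (resp.\ $F_k(\pi((n-1)^\infty))$), whereas $F_{n-1}F_k(\pi(0^\infty))$ for $k\geq 1$ lies off it.

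The same misconception affects (b). Since $T$ multiplies $\theta_1/\theta_2$ by $\mu/\eta\in(-1,0)$, an orbit can alternate between $D_0$ and $D_{n-1}$ for arbitrarily many steps before settling. So it is false that quadrant alternation keeps interior orbits in one region. These alternating prefixes are what the mutual recursion between $B_2$ and $B_3$ produces, and again there is an edge at every $T^{-m}(\pm\theta_*)$.

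Consequently the target in your last paragraph, that no directions beyond those listed carry edges, is the wrong one. What must be shown is twofold. First, edges occur exactly at the normals $T^{-m}(\pm\theta_*)$. Second, their extreme points are images of the bounding-line points under $F_{n-1}^m$, $F_0^m$, or alternating words in $F_0,F_{n-1}$. The sign description above gives this directly: at most one sign change in (a), alternation followed by stabilisation in (b), period two in (c). The paper reaches the same structure geometrically. It follows the images of the bounding lines under repeated $F_{n-1}$ (resp.\ alternating $F_0,F_{n-1}$), which remain supporting lines and accumulate at the corner points.
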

\begin{proof}
Some important properties of the attractor $A$ are:
\begin{enumerate}[label=(\roman*)]
    \item The attractor is symmetric about $0$;
    \item the attractor has translational symmetry along the line through the origin spanned by $\vec{w}$: $F_k(A)$, for $0<k\leq n-1$, is a copy of $F_0(A)$ translated by $(2-2k)\vec{w}$;
\end{enumerate}
Just like we discussed in Section \ref{subsect:complexcollinear} there are only two core lines $\ell_{1,0}$ and $\ell_{0,1}$ both parallel to $\vec{w}$, but with different direction. Let $\alpha$ be the angle that $\vec{w}$ makes with the horizontal axis; let $\theta_0=\alpha+1/4\bmod1$ and $\theta_1=\theta_0+1/2\bmod 1$ indicate the normal directions of each core line.
The symmetries of the attractor mentioned above imply that any edge of $\conv(A)$ at angle $\pm t\in(\theta_0,\theta_1)$ must go through extreme points of  either $F_0(A)$ or $F_{n-1}(A)$.

The result when $0<\mu=\eta$ is then a direct consequence. In fact, since $T$ is a scalar multiple of the identity, then the limit set is a Cantor set along a core line. In particular, if we let $d$ be the diameter of $A$, then, since the length of $F_k(A)$ is $\eta d$ for every $0\leq k\leq n-1$, the minimum value so that $A=\bigcup_{k=0}^{n-1}F_k(A)$ is connected is $\eta=1/n$. Another way to think about this case, is to observe that it is equivalent to the complex collinear case with $\lambda=\eta$.

Consider the case when $\mu<0<\eta$ with $\lvert\mu\rvert=\eta$. From Proposition \ref{prop:extremeconvexcore} we know that any edge of $\conv(A)$ is eventually covered by images of bounding lines. The line $\widehat\ell_{1,0}$ is at angle $\theta_0$ and by definition intersects the attractor at extreme points that belong to $F_k(A)$ for each $0\leq k<n$. Similarly, $\widehat\ell_{0,1}$ is at angle $\theta_1$ and intersects each $F_k(A)$ non-trivially. Since $\lvert\mu\rvert=\eta$ and $\mu<0$ then one application of $F_k$ changes the sign of the slope of any line: $F_k(\widehat\ell_{1,0})$ is at angle $\theta_0+1/4$ while $F_j(F_k(\widehat\ell_{1,0}))$ at $\theta_0+1/2=\theta_1$. We emphasize that $F_{jk}(\widehat\ell_{1,0})$ is parallel to $\widehat\ell_{0,1}$, not that it is equal to it. The DGIFS is readily obtained: let $B_0$ indicate the extreme points on $\widehat\ell_{1,0}$, then they must be the image under $F_k$ for any $0\leq k<n$ of extreme points, call them $B_1$, which are at angle $\theta_0-1/4=\alpha$, that is \[B_0=F_0(B_1)\cup F_1(B_1)\cup\cdots\cup F_{n-1}(B_1) \text{ and } B_1=F_{n-1}(B_0);\]by symmetry we have
\[ B_2=\bigcup_{k=0}^{n-1}F_k(B_3) \text{ and } B_3=F_0(B_2).\]
It follows that $\conv(A)$ is a quadrilateral. However, the attractor might be totally disconnected and thus each $B_k$, for $0\leq k\leq4$, is a Cantor set. The attractor $A$ is connected, and thus convex in this case, if each $B_k$ is also connected: notice that we can rewrite the equation for $B_0$ using the one for $B_1$
\[B_0=\bigcup_{k=0}^{n-1}F_k(B_1)=\bigcup_{k=0}^{n-1}F_k(F_{n-1}(B_0))\]
and since $F_k\circ F_{n-1}$ contracts by $\eta^2$, then the minimum value for which $B_0$ is connected is $n\eta^{2}\geq 1$.

Let now $0<\mu<\eta$. The image $F_k(\widehat\ell_{1,0})$, for any $0\leq k<n$, is a line at angle $t>\theta_0$, because $\mu<\eta$. In particular, a vertical line will remain fixed under the action of $F_j$. By definition of a bounding line, the attractor $A$ is in its closed left half plane, thus $A$ is in the closed left half plane of $F_j(\widehat\ell_{1,0})$ only if $j=n-1$. The same reasoning can be applied to the line $F_{n-1}(\widehat\ell_{1,0})$ to obtain that $A$ also lies in the closed left half plane of $F_{n-1}(F_{n-1}(\widehat\ell_{1,0}))$. Therefore, the attractor is in the closed left half plane of $F_\omega(\widehat\ell_{1,0})$ for $\omega=w_0w_1w_2\cdots w_k$ with $w_j=n-1$. In the limit, $F_\omega(\widehat\ell_{1,0})$, where $\omega$ is the infinite sequence of the digit $n-1$, is a vertical line and it must be tangent to $A$. We conclude then that the fixed point, $p$, of $F_{n-1}$ must be in $\ext(A)$. Moreover, the bounding line $\widehat\ell_{0,1}$ must intersect this vertical line at $p$ since $\widehat\ell_{0,1}$ goes through some extreme points of $A$ and the images $F_k(\widehat\ell_{0,1})$ are steeper than it.

Now, since $p\in\widehat\ell_{0,1}$ then so are $F_k(p)$ for any $0\leq k<n$. By symmetry, $F_k(q)$ for any $0\leq k<n$ are elements of $\widehat\ell_{1,0}$, where $q$ is the fixed point of $F_0$.

The system of equation defining the DGIFS is then constructed as follows. Let $B_0=F_0(B_0)$ and set $B_1$ to be the set of extreme points which are on $\widehat\ell_{1,0}$, that is $F_k(B_0)$, and their images under $F_{n-1}$:\[B_1=\bigcup_{k=0}^{n-2}F_k(B_0)\cup F_{n-1}(B_1).\] By symmetry we obtain the remaining equations \[B_2= F_{0}(B_2)\cup \bigcup_{k=1}^{n-1}F_k(B_3) \qquad B_3=F_{n-1}(B_3).\]

The argument for the case $\mu<0<\eta$ with $\rvert\mu\lvert<\eta$ is quite similar: since $\mu<0$, applying $T$ to any vector $\vec{v}$ reflects it across the $y$-axis and rescales both coordinates. Therefore, given that by definition $A$ is in the left half plane of $\widehat\ell_{1,0}$, we need that, for $0\leq j,k<n$, $F_{k}(\widehat\ell_{1,0})$ contains $A$ in its right half plane and, subsequently, that $A$ lies in the left half plane of $F_{jk}(\widehat\ell_{1,0})$. It follows that we want to alternate between applying $F_0$ and $F_{n-1}$. Observe, also, that because of the reflection across the $y$-axis, the bounding line $\widehat\ell_{1,0}$ must intersect $F_0(\widehat\ell_{1,0})$ and such intersection point $p$ is necessarily the fixed point of $F_0$. Thus, $p$ and $F_k(p)$ for any $0\leq k<n$ are extreme points of $A$ on the bounding line $\widehat\ell_{1,0}$. By symmetry we have that $q=F_{n-1}(q)$ and $F_k(q)$ for any $0\leq k<n$ are in $\ext(A)$ and on the line $\widehat\ell_{0,1}$.

The DGIFS describing points of $\ext(A)$ is then given by
\[\begin{cases}
    B_0=F_0(B_0)&\\
    B_1=F_{n-1}(B_1)&\\
    B_2=\bigcup_{k=0}^{n-2}F_k(B_0) \cup F_{n-1}(B_3)&\\
    B_3=F_0(B_2) \cup \bigcup_{k=1}^{n-1}F_k(B_1)&
\end{cases}.\]

\end{proof}

\begin{corollary}
    When $-1<\mu<0<\eta<1$, the DGIFS given in Theorem \ref{thm:mainrealcollinear} has one connected component but it is not strongly connected. In the remaining cases, the directed graph has exactly two connected components. But only in the cases where $\lvert\mu\rvert=\eta$ each of the components is strongly connected.
\end{corollary}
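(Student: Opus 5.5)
The corollary is a statement about the directed graphs read off from the four systems in Theorem \ref{thm:mainrealcollinear}. I will treat it as a combinatorial check on those graphs. For each case I build the graph with a vertex $B_i$ for every set and an edge $B_u\to B_v$ labelled $k$ whenever $F_k(B_v)$ appears in the equation for $B_u$. Connected components are taken in the underlying undirected graph; strong connectivity means every vertex can reach every other along directed edges. The last case $0<\mu=\eta$ has no DGIFS of that form, so it can only match the phrase ``remaining cases'' if we read it as the collinear complex system of Theorem \ref{thm:maincplxcollinear} with $\lambda=\eta$, $\phi=0$. There $q=1$ and $b=1$, so the graph is two vertices $B_0$, $B_1$, each with $n$ self-loops. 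That gives two strongly connected components, and this case should be checked against (or excluded from) the stated claim.

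For $\mu<0<\eta$ with $|\mu|<\eta$, I list the edges. These are $B_0\to B_0$ and $B_1\to B_1$, then $B_2\to B_0$, $B_2\to B_3$, $B_3\to B_2$ and $B_3\to B_1$. All four vertices lie in one undirected component, which is the first claim. The graph is not strongly connected because $B_0$ has out-edges only to itself, so no directed path leaves $\{B_0\}$ and in particular $B_0$ cannot reach $B_2$. For $\mu<0<\eta$ with $|\mu|=\eta$, the edges are $B_0\to B_1$, $B_1\to B_0$, $B_2\to B_3$ and $B_3\to B_2$. No equation mixes the indices $\{0,1\}$ with $\{2,3\}$, so there are exactly two components, and each one is a directed $2$-cycle, hence strongly connected.

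For $0<\mu<\eta$ the edges are $B_0\to B_0$, $B_1\to B_0$, $B_1\to B_1$, then $B_2\to B_2$, $B_2\to B_3$ and $B_3\to B_3$. Again there are two components, $\{B_0,B_1\}$ and $\{B_2,B_3\}$. Neither is strongly connected, since $B_0$ cannot reach $B_1$ and $B_3$ cannot reach $B_2$.

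The main obstacle is not any computation but the wording of the statement. The hypothesis ``$-1<\mu<0<\eta<1$'' must be read as the subcase $|\mu|<\eta$, because the subcase $|\mu|=\eta$ also has $\mu<0$ yet gives two components. Likewise ``only in the cases where $|\mu|=\eta$'' conflicts with the $0<\mu=\eta$ case, whose components are strongly connected, unless that degenerate case is set aside. I would state these readings explicitly before running the edge checks above.
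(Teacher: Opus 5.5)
Your edge-by-edge check is correct and is essentially what the paper intends: the paper gives no separate argument, and the corollary is read off directly from the four systems of Theorem \ref{thm:mainrealcollinear}. The $0<\mu=\eta$ case is handled, as the paper's proof suggests, by Theorem \ref{thm:maincplxcollinear} with $\phi=0$, and you are right that the first clause only makes sense for the subcase $|\mu|<\eta$.

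One correction to your last paragraph: $0<\mu=\eta$ does satisfy $|\mu|=\eta$. So the plural ``cases where $|\mu|=\eta$'' covers both $\mu=-\eta$ and $\mu=\eta$. Your own computation there (two vertices, each carrying $n$ self-loops) therefore confirms the statement rather than conflicting with it, and that case need not be set aside.
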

\begin{remark}
    Observe that the system of equations in each case of Theorem \ref{thm:mainrealcollinear} can not be decoupled. Thus, in contrast to what we did in Section \ref{sect:complex}, we have no hope to find an IFS of affine transformations whose attractor is $B_k$ for $0\leq k\leq3$.
\end{remark}

\subsection{Examples}
\label{subsect:realcollinearexamples}
With this type of IFS unfortunately we do not have particularly famous attractors, so we reuse some of the examples chosen by previous authors. In all cases, we fix $w=1+\I$.

\subsubsection{Positive $\eta$ and negative $\mu$}
This is an example taken from \cite[Theorem 5.2]{R25}. Let $n=2$ and set $\mu=-10/17$, $\eta=10/13$%, then $\ext(A)$ is described by 
% \[\begin{cases}
%     B_0=F_0(B_0)&\\
%     B_1=F_1(B_1)&\\
%     B_2=F_0(B_0)\cup F_1(B_3)&\\
%     B_3=F_0(B_2)\cup F_1(B_1)&
% \end{cases}\]
\begin{figure}
    \centering
    \includegraphics[width=0.25\linewidth]{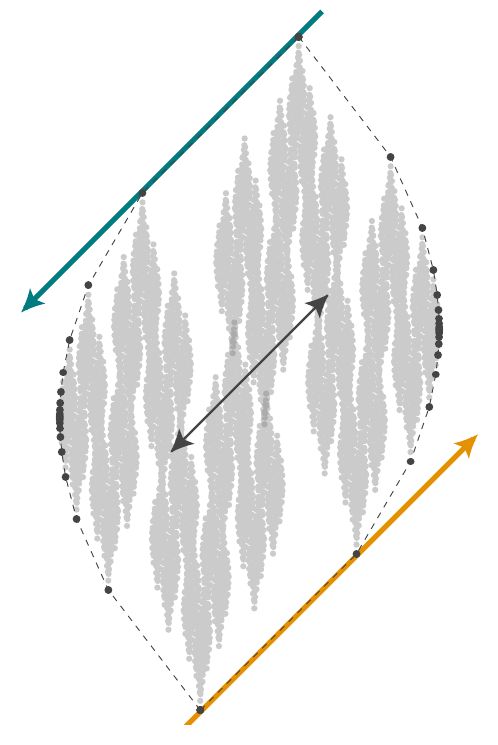}
    % \hspace{1cm}
    \begin{tikzpicture}
    
    \node [draw, circle] (90) at (90:2) (s0) {\small $B_0$};
    \node [draw, circle] (270) at (270:2) (s1) {\small $B_1$};
    \node [draw, circle] (0) at (0:2) (s2) {\small $B_2$};
    \node [draw, circle] (180) at (180:2) (s3) {\small $B_3$};
        
    \path (s0) edge [loop left] node {$0$} (s0);
    \path (s1) edge [loop right] node {$1$} (s1);
    \path[->] (s2) edge [midway, bend right, above] node {$1$} (s3);
    \path[->] (s2) edge [midway, right] node {$0$} (s0);
    \path[->] (s3) edge [midway, bend right, below] node {$0$} (s2);
    \path[->] (s3) edge [midway, left] node {$1$} (s1);
    
\end{tikzpicture}
    \caption{On the left, the extreme points, the convex hull, and bounding set of the attractor of the IFS $\Psi_2$ when $\mu=-10/17$ and $\eta=10/13$. On the right, the DGIFS for $\ext(A)$.}
    \label{fig:RealPosNeg}
\end{figure}
The attractor inside its convex hull is shown in Figure \ref{fig:RealPosNeg}. Note that the sets $B_2$ and $B_3$ are the infinite collection of points to the right and left side of the attractor, respectively. Moreover, $B_2$ contains the fixed point $B_0$ while $B_1$ is contained in $B_3$.

\subsubsection{Positive $\eta$ and positive $\mu$}
Let $n=2$ and let $1/\mu\approx1.81618$, $1/\eta\approx1.30022$ be two of the roots of $x^5-2x^4+2$. The authors in \cite{HS17} have shown that the origin is the only point belonging to $F_0(A)\cup F_1(A)$, by essentially proving that $0$ is the unique element of $F_0(\conv(A))\cap F_1(\conv(A))$.
% The set of extreme points is described by
% \[\begin{cases}
%     B_0=F_0(B_0)&\\
%     B_1=F_0(B_0)\cup F_1(B_1)&\\
%     B_2=F_0(B_2)\cup F_1(B_3)&\\
%     B_3=F_1(B_3)&
% \end{cases}\]
The set of extreme points is shown in Figure \ref{fig:RealPosPos}.
\begin{figure}
    \centering
    \includegraphics[width=0.3\linewidth]{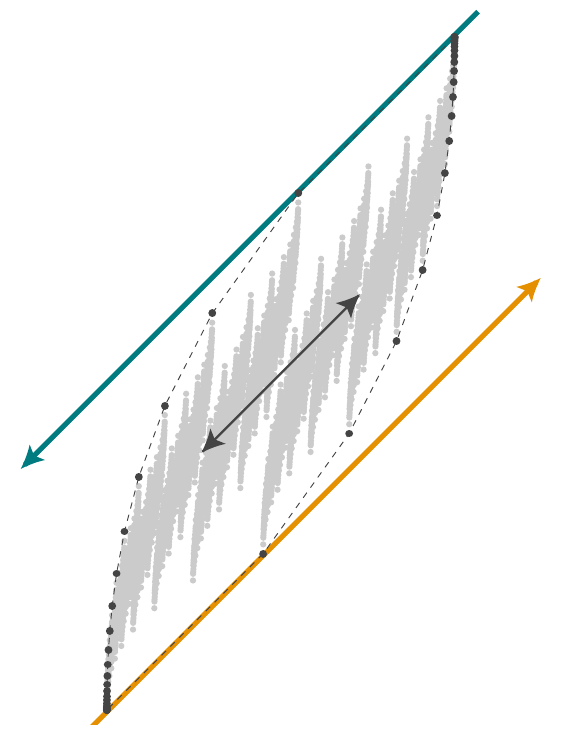}
    % \hspace{1cm}
    \begin{tikzpicture}
    
    \node [draw, circle] (90) at (90:2) (s0) {\small $B_0$};
    \node [draw, circle] (180) at (180:2) (s1) {\small $B_1$};
    \node [draw, circle] (0) at (0:2) (s2) {\small $B_2$};
    \node [draw, circle] (270) at (270:2) (s3) {\small $B_3$};
        
    \path (s0) edge [loop right] node {$0$} (s0);
    \path (s1) edge [loop below] node {$1$} (s1);
    \path[->] (s1) edge [midway, right] node {$0$} (s0);
    \path (s3) edge [loop left] node {$1$} (s3);
    \path[->] (s2) edge [loop above] node {$0$} (s2);
    \path[->] (s2) edge [midway, left] node {$1$} (s3);
    
\end{tikzpicture}
    \caption{On the left, the extreme points, the convex hull, and bounding set of the attractor of the IFS $\Psi_2$ when $0<\mu<\eta$ with $\mu^{-1}$ and $\eta^{-1}$ roots of $x^5-2x^4+2$. On the right, the DGIFS for $\ext(A)$.}
    \label{fig:RealPosPos}
\end{figure}
\subsubsection{Equal absolute value}
Let $n=4$ and set $\eta=-\mu=n^{-1/2}=0.5$ then by Theorem \ref{thm:mainrealcollinear} the convex hull is a quadrilateral. 
In particular, the attractor is also convex. See Figure \ref{fig:RealEqual}.
% \[\begin{cases}
%     B_0=F_0(B_1)\cup F_1(B_1)\cup F_2(B_1)\cup F_3(B_1)&\\
%     B_1=F_3(B_0)&\\
%     B_2=F_0(B_3)\cup F_1(B_3)\cup F_2(B_3)\cup F_3(B_3)&\\
%     B_3=F_0(B_2)&
% \end{cases}\]
\begin{figure}
    \centering
    \includegraphics[width=0.4\linewidth]{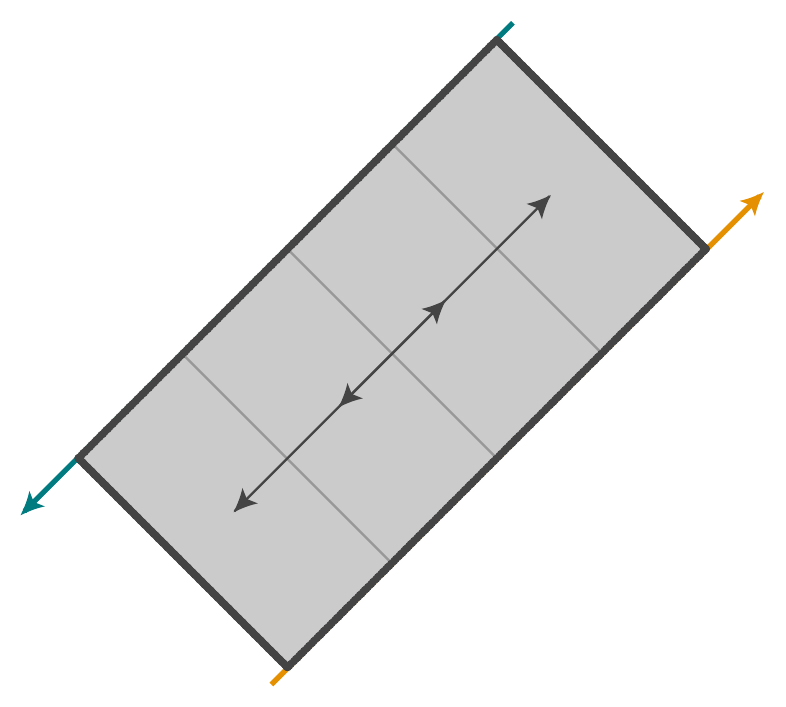}
    % \hspace{1cm}
    \begin{tikzpicture}
    
    \node [draw, circle] (90) at (90:2) (s0) {\small $B_0$};
    \node [draw, circle] (180) at (180:2) (s1) {\small $B_1$};
    \node [draw, circle] (0) at (0:2) (s2) {\small $B_2$};
    \node [draw, circle] (270) at (270:2) (s3) {\small $B_3$};
        
    \path[->] (s0) edge [bend right=100, above] node {$0$} (s1);
    \path[->] (s0) edge [bend right=60, above] node {$1$} (s1);
    \path[->] (s0) edge [bend right=35, above] node {$2$} (s1);
    \path[->] (s0) edge [bend right=10, above] node {$3$} (s1);
    \path[->] (s1) edge [bend right, below] node {$3$} (s0);
    
    \path[->] (s2) edge [bend left=100, below] node {$0$} (s3);
    \path[->] (s2) edge [bend left=60, below] node {$1$} (s3);
    \path[->] (s2) edge [bend left=35, below] node {$2$} (s3);
    \path[->] (s2) edge [bend left=10, below] node {$3$} (s3);
    \path[->] (s3) edge [bend left, above] node {$0$} (s2);
        
\end{tikzpicture}
    \caption{On the left, the boundary points and the bounding set of the attractor of the IFS $\Psi_2$ when $\eta=-\mu=0.5$. On the right, the directed graph relating the sides of the convex hull.}
    \label{fig:RealEqual}
\end{figure}

\subsection{Bedford-McMullen Carpets}
\label{subsect:realcarpets}
Fix integers $n>m\geq2$ and a finite non-empty \emph{digit set} $\mathcal{D}$ which is a subset of $\{0,\cdots,n-1\}\times\{0,\cdots,m-1\}$. The Bedford-McMullen IFS consists of affine maps \[F_d(\vec{x})=T(\vec{x}+d),\qquad d\in\mathcal{D}\]
where $\mu=1/n$ and $\eta=1/m$. The attractor $A$, often called a Bedford-McMullen carpet, is the unique non-empty compact set satisfying \[A=\bigcup_{d\in \mathcal{D}}F_d(A).\]

Usually, a more visual way is favored when defining the Bedford-McMullen IFS. Divide the unit square, $[0, 1]^2$ into an $n \times m$ grid. Select a subset of the rectangles formed by the grid and consider the IFS consisting of the affine maps which map $[0, 1]^2$ onto each chosen rectangle while preserving orientation.

Observe that with this setup we cannot hope to find a unique system of equations that works for all the possible choices of $\mathcal{D}$. However, we will argue that it is always possible to find a DGIFS, with finitely many equations, for the extreme points of $A$. In fact, the proof is constructive and an example will be shown in the next section.

\begin{theorem}
\label{thm:mainrealcarpet}
    Fix integers $n>m\geq2$ and consider a Bedford-McMullen IFS $\{F_d\}_{d\in\mathcal{D}}$ for a non-empty digit set $\mathcal{D}\subset\{0,\cdots,n\}\times\{0,\cdots,m\}$. The extreme points of the limit set $A$ are described by a system of finitely many equations and functions $F_d$ where  $d\in\overline{\conv(\mathcal{D})}\setminus\conv(\mathcal{D})$.
\end{theorem}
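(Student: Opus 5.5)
The approach is to read off the extreme points from an explicit formula for the maximiser of the support function. Since $T=\operatorname{diag}(1/n,1/m)$ is symmetric, positive and common to all maps, the computation before Proposition~\ref{prop:boundinglines} gives
\[h_A(\theta)=h_A(T\theta)+\max_{d\in\mathcal{D}}\langle \theta,Td\rangle .\]
Iterating this identity, as in the proof of Proposition~\ref{prop:boundinglines}, yields
\[h_A(\theta)=\sum_{k\geq0}\max_{d\in\mathcal{D}}\langle T^{k+1}\theta,d\rangle .\]
Hence a point of $A$ supports $\conv(A)$ in direction $\theta$ exactly when its code $d_0d_1d_2\cdots$ (through $\pi$ of Theorem~\ref{thm:CodeSpaceProjection}) satisfies, for every $k$, that $d_k$ maximises $\langle \psi_k,\cdot\rangle$ on $\mathcal{D}$. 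Here $\psi_k$ denotes the normalised direction of $T^{k}\theta$, and the maximiser may be taken in $\mathcal{D}$ or in $\conv(\mathcal{D})$, since a linear function attains its maximum on $\mathcal{D}$ at the same set of digits. So only digits on the boundary of the convex hull of $\mathcal{D}$ ever appear. These are the digits in the statement, where I read $\overline{\conv(\mathcal{D})}\setminus\conv(\mathcal{D})$ as the relative boundary. Each such digit $d$ has a dominance arc $D_d$, namely its normal cone, as in Proposition~\ref{prop:boundinglines}(iii).

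The key step is the dynamics of $\theta\mapsto\psi_1$ on $\mathbb{S}^1$. Because $1/m>1/n>0$, this map fixes $\pm e_1$ and $\pm e_2$, preserves each open quadrant, and moves every other direction monotonically towards $\pm e_2$ (the vertical normals). Consequently, along an orbit $\psi_0,\psi_1,\dots$ the normal cones are visited in monotone order along one quarter of the normal fan of $\conv(\mathcal{D})$. After finitely many steps $\psi_k$ enters, and never leaves, the cone of the topmost (respectively bottommost) digit of $\conv(\mathcal{D})$ on the side from which it approaches $\pm e_2$. If the top edge of $\conv(\mathcal{D})$ is horizontal, the side of approach selects one of its two endpoints. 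Exact ties occur only when some $\psi_k$ equals the normal of an edge of $\conv(\mathcal{D})$, and then every digit on that edge is allowed. Thus every extreme itinerary has the form $d^{(1)}{}^{a_1}d^{(2)}{}^{a_2}\cdots d^{(r)}{}^{\infty}$. The $d^{(i)}$ are corners, or digits on a common edge at a tie, and they run monotonically along a quarter of $\partial\conv(\mathcal{D})$. The run lengths $a_i$ are unbounded only because directions near $\pm e_1$ stay near $\pm e_1$ for a long time.

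These words form a regular language, which gives the DGIFS. For each corner $v$ of $\conv(\mathcal{D})$ and each quadrant, let $B_v$ be the set of points whose itinerary starts in the run of $v$. The equations are
\[B_v=F_v(B_v)\cup\bigcup_{v'}F_{v}(B_{v'}),\]
where $v'$ ranges over the next corners allowed towards $\pm e_2$. The terminal sets satisfy $B_v=F_v(B_v)$, giving the fixed points of the top and bottom corners, as in the case $0<\mu<\eta$ of Theorem~\ref{thm:mainrealcollinear}. Finally, for each edge of $\conv(\mathcal{D})$ I add a vertex whose equation is a union of $F_d(B_{v'})$ over all digits $d$ on that edge; this encodes the bounding lines $\widehat\ell$ and Proposition~\ref{prop:extremeconvexcore}. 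The number of equations is bounded by a constant times the number of boundary digits, so it is finite. Conversely, the union of the resulting DG-attractors lies in $\ext(A)$ by Theorem~\ref{thm:extreme}.

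The main obstacle is the converse, realizability: showing that every word accepted by the graph is the itinerary of a genuine extreme point, so that no spurious points are added. For a given admissible word $d^{(1)}{}^{a_1}\cdots$, one must find a direction $\theta$ whose orbit spends exactly $a_i$ steps in each cone. I would try to do this using continuity and monotonicity of $\theta\mapsto\psi_k$ together with an intermediate-value argument on the cone boundaries. I would treat the tie directions, where $\psi_k$ lands exactly on an edge normal of $\conv(\mathcal{D})$, separately, since that is where the edge vertices of the graph enter.
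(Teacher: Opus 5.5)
Your starting point is correct, and it is a genuinely different route from the paper's. The expansion $h_A(\theta)=\sum_{k\geq0}\max_{d}\langle T^{k+1}\theta,d\rangle$ shows that \emph{every} code of a point of $A$ on the supporting line with normal $\theta$ has $d_k$ maximising $\langle T^{k+1}\theta,\cdot\rangle$. The sum of termwise maxima equals $h_A(\theta)$, so each term must be maximal. (The exponent is $k+1$, not $k$; this is a harmless shift.) This gives an exact characterisation of extreme itineraries. The paper instead iterates bounding lines edge by edge. It reuses the pattern $B_0=F_{d_0}(B_0)$, $B_1=\bigcup_{k<r_\ell}F_{d_k}(B_0)\cup F_{d_{r_\ell}}(B_1)$ from Theorem~\ref{thm:mainrealcollinear}, and $B=\bigcup_{d\in\mathcal{D}(\ell)}F_d(B)$ for axis-parallel edges.

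The gap is in the graph you build from the characterisation. Putting a self-loop $F_v(B_v)$ at every corner makes every run length free. The realizability you plan to prove by an intermediate-value argument is not merely hard; it is false. In the coordinate $u=\log(\text{slope})$ on an open quadrant, $\theta\mapsto T\theta$ is the translation $u\mapsto u+\log(n/m)$, so an orbit is an arithmetic progression. A corner whose normal cone is a bounded $u$-interval is therefore visited for only boundedly many consecutive steps. Moreover, every run after the first is fixed by the single phase $u_0 \bmod \log(n/m)$, so one real parameter cannot prescribe independent run lengths.

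Here is a concrete failure. Take $n=4$, $m=3$, $\mathcal{D}=\{(0,0),(3,0),(2,1),(0,2)\}$. The cone of $(2,1)$ consists of the slopes in $[1,2]$, and $(4/3)^3>2$, so no orbit has four consecutive points in it. Your graph nevertheless accepts $(3,0)\,(2,1)^4\,(0,2)^\infty$. By the every-code property above, its projection cannot lie in $\partial\conv(A)$. Also, Theorem~\ref{thm:extreme} only gives $\ext(A)\subseteq\bigcup_iF_i(\ext(A))$. It cannot show that the DG-attractors lie in $\ext(A)$, and that is exactly the inclusion that fails here.

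The same phase picture repairs the argument. The finitely many cone endpoints cut $\mathbb{R}/\log(n/m)\mathbb{Z}$ into finitely many arcs, each producing one middle word $W$. In addition there are finitely many tie phases, where one position may be any digit of the corresponding edge. Hence in each quadrant the extreme itineraries are $v_1^{a}\,W\,v_r^{\infty}$ with $a\geq0$ free, together with suffixes of these. This is a regular language. It is realised by a graph with self-loops only at the corner adjacent to $\pm e_1$ and at the terminal corner near $\pm e_2$, and single-successor chains in between.

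Axis-parallel edges need separate treatment. Their normals are fixed by $T$, so the tie recurs at every step. The correct equation is then the full shift $B=\bigcup_{d}F_d(B)$ over the digits of that edge, not a vertex with one free position.

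When each quadrant contains at most one slanted edge, your corrected graph reduces to the paper's two-equation pattern. The paper's per-edge recipe (one run followed by a fixed point) does not cover intermediate corners. That case is exactly where your support-function framework, once corrected, adds something the paper does not provide.
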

\begin{proof}
    From Proposition \ref{prop:extremeconvexcore} we know that edges of $\conv(A)$ are contained in images of bounding lines to $A$ and these are lines parallel to the core lines. 
    
    Now, suppose $\ell$ is a core line containing an edge of $\conv(\mathcal{D})$. Let $\mathcal{D}(\ell):=\{d_j\}_{j=0}^{r_\ell}$ denote the ordered digits through which $\ell$ passes. Finally, let $\widehat\ell$ be the bounding line to $A$ parallel to $\ell$ with the attractor contained in its left half plane. There are two cases to consider: either $\ell$ has non-zero and finite slope, or it is a vertical or horizontal line. 
    
    In the former case, by the argument in the proof of Theorem \ref{thm:mainrealcollinear} (when $0<\mu<\eta$) the extreme points belong to $\widehat\ell$ and to its images under $F_{d_{r_\ell}}$. Therefore, the extreme points are described by the following pair of equations:
    \[B_{0}=F_{d_0}(B_0),\qquad B_1=\bigcup_{k=0}^{r_\ell-1}F_{d_k}(B_0)\cup F_{d_{r_\ell}}(B_1).\]
    
    Let $\widehat\ell$ be a vertical or horizontal bounding line, then its slope does not change under application of $F_d$, but only its relative position. Consequently, the extreme points of $A$ on $\widehat\ell$ are described by 
    \[B_2=\bigcup_{d\in\mathcal{D}(\ell)}F_d(B_2).\]

    The sofic system on the symbol space $\{d_0,d_1,\ldots,d_{n-1}\}$ describing the itineraries of all extreme points of the attractor $A$ is then easily found by applying the arguments above for all the sides of $\conv(\mathcal{D})$.
\end{proof}

\subsection{Example}
\label{subsect:realcarpetsexamples}
Let $n=5$, $m=3$, and choose the following set of digits \[\mathcal{D}=\left\{ d_0=\begin{pmatrix}0\\1\end{pmatrix},d_1=\begin{pmatrix}1\\2\end{pmatrix},d_2=\begin{pmatrix}2\\2\end{pmatrix},d_3=\begin{pmatrix}2\\1\end{pmatrix},d_4=\begin{pmatrix}3\\1\end{pmatrix},d_5=\begin{pmatrix}3\\0\end{pmatrix} \right\}.\]
The convex hull of $\mathcal{D}$ is easily found to be the polygon with vertices 
$\{ d_0,d_1,d_2,d_4,d_5 \}$.
Let $\ell_k$ for $0\leq k\leq3$ denote the core lines, starting from the top horizontal line and continuing counterclockwise. The corresponding digits to each of these lines are the digits
\[
    \mathcal{D}(\ell_0)=\{d_2,d_1\}\quad\mathcal{D}(\ell_1)=\{d_1,d_0\}\quad
    \mathcal{D}(\ell_2)=\{d_0,d_5\}\quad\mathcal{D}(\ell_3)=\{d_5,d_4,d_2\}.
\]
From the proof of Theorem \ref{thm:mainrealcarpet} we obtain the DGIFS shown in Figure \ref{fig:Carpet}.
% \[\begin{cases}
%     B_0=F_{d_2}(B_0)\cup F_{d_1}(B_0)&\\
%     B_1=F_{d_1}(B_1)&\\
%     B_2=F_{d_1}(B_1)\cup F_{d_0}(B_2)&\\
%     B_3=F_{d_5}(B_3)&\\
%     B_4=F_{d_5}(B_3)\cup F_{d_0}(B_4)&\\
%     B_5=F_{d_2}(B_5)&\\
%     B_6=F_{d_2}(B_5)\cup F_{d_4}(B_5)\cup F_{d_5}(B_6)&
% \end{cases}\]
\begin{figure}[!htb]
    \centering
    \includegraphics[width=0.425\linewidth]{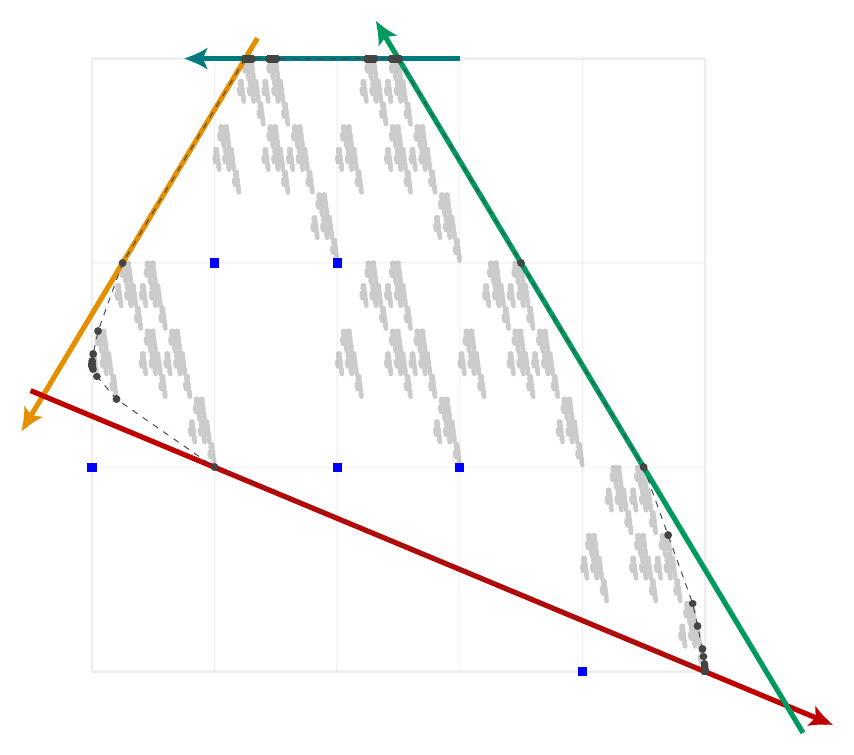}
    % \hspace{1cm}
    \begin{tikzpicture}
    
    \node [draw, circle] (90) at (90:2) (s0) {\small $B_0$};
    \node [draw, circle] (145) at (145:2) (s1) {\small $B_1$};
    \node [draw, circle] (190) at (190:2) (s2) {\small $B_2$};
    \node [draw, circle] (240) at (240:2) (s3) {\small $B_3$};
    \node [draw, circle] (285) at (285:2) (s4) {\small $B_4$};
    \node [draw, circle] (325) at (325:2) (s5) {\small $B_5$};
    \node [draw, circle] (10) at (10:2) (s6) {\small $B_6$};
        
    \path[->] (s0) edge [loop above] node {$d_1$} (s0);
    \path[->] (s0) edge [loop below] node {$d_2$} (s0);
    \path[->] (s1) edge [loop above] node {$d_1$} (s1);
    \path[->] (s2) edge [bend left, left] node {$d_1$} (s1);
    \path[->] (s2) edge [loop left] node {$d_0$} (s2);
    \path[->] (s3) edge [loop left] node {$d_5$} (s3);
    \path[->] (s4) edge [bend left=20, below] node {$d_5$} (s3);
    \path[->] (s4) edge [loop below ] node {$d_0$} (s4);
    \path[->] (s5) edge [loop right] node {$d_2$} (s5);
    \path[->] (s6) edge [bend left=20, right] node {$d_2$} (s5);
    \path[->] (s6) edge [bend right=20, left] node {$d_4$} (s5);
    \path[->] (s6) edge [loop above] node {$d_5$} (s6);

\end{tikzpicture}
    \caption{On the left, a Bedford-McMullen carpet, its bounding lines, and its extreme points, when $n=5$ and $m=3$. The carpet lives in the unit square and each rectangle of the grid has size $1/n\times1/m$. The corners of the grid marked with a rectangle indicate the elements of $\mathcal{D}$. On the right, the DGIFS associated with the extreme points.}
    \label{fig:Carpet}
\end{figure}

\subsection{Arbitrary translations}
\label{subsect:realarbitrary}
The proof of Theorem \ref{thm:mainrealcarpet} can be applied to the more general case of arbitrary translations, because it only needs the assumption that the two numbers on the main diagonal of the matrix $T$ are positive and that one is smaller than the other. The case when $F_k(\vec{x})=T\vec{x}+\vec{v}_k$, where $T$ is a scalar multiple of the identity matrix, is equivalent to the complex case (Section \ref{subsect:complexarbitrary}) with no rotation. In this section we will consider the remaining two real cases with arbitrary translations.

Let $\mathcal{D}$ be a non-empty discrete subset of $\mathbb{R}^2$ of size $n$, and denote by $\Psi_n$ the IFS consisting of \[F_d(\vec{x})=T\vec{x}+d=\begin{pmatrix}\mu&0\\0&\eta\end{pmatrix}\vec{x}+d,\quad d\in\mathcal{D}\]
where $0<\lvert\eta\rvert\leq\mu<1$.
\begin{theorem}
\label{thm:mainrealarbitrary}
    Fix integers $n\geq2$ and consider the IFS $\Psi_n$. The extreme points of the limit set $A$ are described by a system of finitely many equations and functions $F_d$ where $d\in\overline{\conv(\mathcal{D})}\setminus\conv(\mathcal{D})$. Moreover, if $A$ is convex and $\mu<0$ with $\lvert\mu\rvert=\mu$, then $\eta\geq2^{-0.5}$ and $A$ is a polygon with at most twice the number of sides of $\conv(\mathcal{D})$.
\end{theorem}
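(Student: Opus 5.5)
Proposal for the statement above. The plan is to run the same edge-by-edge construction as in Theorem \ref{thm:mainrealcarpet}, now for arbitrary translations, splitting into the two remaining sign configurations of $T$: $0<\eta<\mu$, and the case with one negative diagonal entry. The statement writes "$\mu<0$ with $\lvert\mu\rvert=\mu$", which is inconsistent as written; I will read it as $\lvert\eta\rvert=\mu$ with $\eta<0$, the analogue of the equal-modulus case of Theorem \ref{thm:mainrealcollinear}.

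First, by Proposition \ref{prop:boundinglines}(iii), the bounding lines $\widehat\ell$ are parallel to the core lines, i.e.\ to the edges of $\conv(\mathcal{D})$. Only digits $d$ lying on the boundary of $\conv(\mathcal{D})$ have nonempty dominance regions $D_d$. Hence, by Theorem \ref{thm:extreme} and Proposition \ref{prop:extremeconvexcore}, every extreme point has an itinerary using only those maps.

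Fix an edge with ordered digits $\mathcal{D}(\ell)=\{d_0,\dots,d_r\}$.
\begin{itemize}
\item If $\ell$ is horizontal or vertical, $T$ preserves its direction, and as in the carpet proof we get $B=\bigcup_{d\in\mathcal{D}(\ell)}F_d(B)$.
\item If $\ell$ is oblique and $0<\eta<\mu$, then iterating any $F_d$ rotates the direction of $\widehat\ell$ monotonically toward the vertical. The argument for $0<\mu<\eta$ in Theorem \ref{thm:mainrealcollinear}, with the roles of the axes swapped, shows two things. First, only the extremal digit $d_r$ (or $d_0$) keeps $A$ in the correct half plane. Second, the fixed point of that map is a corner where consecutive bounding lines meet. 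This yields the pair of equations $B_0=F_{d_0}(B_0)$ and $B_1=\bigcup_{k<r}F_{d_k}(B_0)\cup F_{d_r}(B_1)$.
\end{itemize}
Collecting these over the finitely many edges of $\conv(\mathcal{D})$ gives a finite system. By uniqueness of DG-attractors, its solution is exactly $\ext(A)$.

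For the negative case $\eta=-\mu$, $T$ is $\mu$ times a reflection. Hence $T$ maps a line of slope $s$ to one of slope $-s$, and $T^2=\mu^2 I$ preserves every direction. As in the equal-modulus collinear case, each edge of $\conv(A)$ is either a bounding-line edge at angle $\theta_j$ or the reflected image of one, at angle $-\theta_j$. Thus the sides of $\conv(A)$ lie at the at most $2s$ angles $\{\pm\theta_j\}$, where $s$ is the number of sides of $\conv(\mathcal{D})$. The equations take the form $B_j=\bigcup_{d\in\mathcal{D}(\ell_j)}F_d(B_j')$ and $B_j'=F_{d^*}(B_j)$, where $F_{d^*}$ is the unique map dominating at angle $-\theta_j$.

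If $A$ is convex, each $B_j$ is a segment of positive length. Substituting the second equation into the first gives $B_j=\bigcup_k F_d F_{d^*}(B_j)$, a union of at least two copies scaled by $\mu^2=\eta^2$. The length count $2\eta^2\lvert B_j\rvert\ge\lvert B_j\rvert$, exactly as in Theorem \ref{thm:maincplxarbitrary}, gives $\lvert\eta\rvert\ge 2^{-1/2}$.

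The main obstacle is the oblique case with general translations, in the step of identifying which single map is dominant at each intermediate angle. In the carpet and collinear settings this followed from symmetry. In general I would argue via dominance regions: for a direction strictly between two consecutive $\theta_j$, exactly one $D_d$ contains it, so the pre-image edge is uniquely determined. One must also check that the orbit of directions under $T^{\mathrm T}$ stays finite. I expect this to hold because it converges to an axis direction fixed by $T$, contributing only a fixed-point equation.
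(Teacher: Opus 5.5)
Your reading of the garbled hypothesis ($\eta<0$, $|\eta|=\mu$) is fine; it is the paper's case $\mu<0<\eta$, $|\mu|=\eta$ with the axes swapped. The first gap is coverage. You never treat one negative entry with \emph{unequal} moduli ($\eta<0<\mu$, $|\eta|<\mu$). The first assertion of the theorem includes this case, and the paper handles it in a separate paragraph: iterated slopes blow up, ending in a cycle between the extreme left and right digits. This regime behaves differently from the ones you treat:
one coordinate flips at every step, so the normals $(T^{\mathrm T})^{-k}\theta_j$ converge to $\pm e_2$ \emph{alternately};
the edges spawned by a core edge therefore alternate between the top and bottom of $\conv(A)$, and the equations are cross-linked (like $B_2,B_3$ in Theorem~\ref{thm:mainrealcollinear});
and if the limit of $(T^{\mathrm T})^{k}\theta_j$ is itself a core normal, the base point is a fixed point of $F_d\circ F_{d'}$ for the two endpoints of that edge, not of a single map.
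Your closing expectation (``contributing only a fixed-point equation'') misses both effects.

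The same sign issue breaks your horizontal/vertical bullet whenever the edge's normal lies on the flipped axis. In that case $T^{\mathrm T}$ reverses the normal, so the face is built from the \emph{opposite} face. For $\mathcal D=\{-\tfrac23,0,\tfrac23\}^2$ and $T=\tfrac13\operatorname{diag}(1,-1)$ one has $A=[-1,1]^2$, but $B=\bigcup_{d\in\mathcal D(\ell_{\mathrm{top}})}F_d(B)$ has solution $[-1,1]\times\{\tfrac12\}$; the correct equations pair the top and bottom faces (the paper has the same slip).

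In the positive oblique case you are right to distrust the pair of equations copied from the carpet proof. For a regular octagon and $T=\operatorname{diag}(0.9,0.1)$, the extreme points on some bounding lines are images of the fixed point of a digit not on that edge. Also, the images flatten toward the horizontal, not the vertical. Your proposed repair is correct: the normal moves monotonically toward an axis, so it crosses each dominance boundary at most once and the digit sequence is eventually constant. This is sharper than the paper's bare appeal to the carpet theorem.

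The second gap is the bound in the equal-modulus case, where the length count runs the wrong way. If the segment $B_j$ is covered by $N_j=|\mathcal D(\ell_j)|$ copies of length $\eta^2|B_j|$, you get $N_j\eta^2\ge1$. Knowing $N_j\ge2$ gives nothing beyond $\eta^2\ge 1/N_j$. Moreover, $d^*$ is not unique when $-\theta_j$ is itself a core normal, which is why the paper writes a union over $\mathcal D(T\ell_m)$. What the argument actually yields is $N_jN_j^*\eta^2\ge1$, where $N_j^*$ is the number of digits dominating at $-\theta_j$.

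The stated bound is in fact false. Take $\mathcal D=\{(\pm\tfrac12,0),(0,\pm\tfrac12)\}$ and $T=\tfrac12\operatorname{diag}(1,-1)$. The attractor is the convex square $\{|x|+|y|\le 1\}$, each edge being two half-size copies of its mirror edge, yet $|\eta|=\tfrac12<2^{-1/2}$. The paper's proof makes the same inference. So this part needs an extra hypothesis, not a better argument; for instance, an edge carrying exactly two digits whose mirror direction is not a core normal gives $2\eta^2\ge1$.
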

\begin{proof}
    From Proposition \ref{prop:extremeconvexcore} we know that edges of $\conv(A)$ are contained in images of bounding lines to $A$ and these are lines parallel to the core lines. 
    
    Now, suppose $\ell$ is a core line containing an edge of $\conv(\mathcal{D})$. Let $\mathcal{D}(\ell):=\{d_j\}_{j=0}^{r_\ell}$ denote the ordered digits through which $\ell$ passes. Finally, let $\widehat\ell$ be the bounding line to $A$ parallel to $\ell$. For each choice of the pair $\eta,\mu$ there are two cases to consider: either $\widehat\ell$ has non-zero and finite slope, or it is a vertical or horizontal line. 

    If $\widehat{\ell}$ is a vertical or horizontal line, then applying $F_d$ to it does not change its slope but only its relative position. Therefore, the extreme points on $\widehat{\ell}$ are described by 
    \[B_0=\bigcup_{d\in\mathcal{D}(\ell)}F_d(B_0).\]
    Let $\widehat{\ell}$ be a bounding line with non-zero and finite slope.
    If $\eta=\mu>0$, this reduces to the complex setting with no rotation, i.e. Theorem \ref{thm:maincplxarbitrary}. If $\eta>\mu>0$, then this is equivalent to Theorem \ref{thm:mainrealcarpet}. Suppose then that $\eta>0>\mu$.
    
    Since $\mu<0$, applying $T$ to any bounding line reflects it across the $y$-axis and rescales both coordinates. Assuming $\lvert\mu\rvert=\eta$, the contraction in both coordinates is the same, so each bounding line is invariant under multiplication by $T^2$. Denote by $\theta$ the normal direction of $\widehat\ell$, then $F_k(\widehat\ell)$, for any $k$, is a line with normal direction $\theta+1/2$ and $F_{jk}(\widehat\ell)$, for any $j$, has normal direction $\theta$. By definition, the attractor is in the left half plane of $\widehat{\ell}$. Given that $A$ is compact, there is always at least one $d$ for which the entire attractor is in the right half plane of $F_d(\widehat{\ell})$. Similarly, we can find at least one $d'$ for which $F_{d'}(F_d(\widehat{\ell}))$ has the attractor in its left half plane. In terms of equations we then have:\[B_{0,m}=\bigcup_{d\in\mathcal{D}(\ell_m)}F_d(B_{1,m}),\quad B_{1,m}=\bigcup_{d'\in\mathcal{D}(T\ell_m)}F_{d'}(B_{0,m})\] where by $\mathcal{D}(T\ell_m)$ we mean the set of digits $d'$ for which $\conv(\mathcal{D})$ is contained in the right half plane of $F_{d'}(\ell_m)$. We conclude that $\conv(A)$ is a polygon with at most twice the number of sides of $\conv(\mathcal{D})$. Moreover, if $A$ is convex then $2\eta^2\geq1$, since $\lvert\mathcal{D}(\ell)\rvert\geq2$ for each $\ell$. 
    
    In the case $\lvert\mu\rvert<\eta$, we need to account for the fact that each application of $F_k$, for any $k$, increases the absolute value of the slope of $F_k(\gamma)$ for some segment $\gamma$. Starting with a segment $\gamma$ on some bounding line $\widehat\ell$, we have to check whether $F_k(\gamma)$ can be an edge of $\conv(A)$ by comparing its slope to that of $\widehat\ell'$, for each $k\in\mathcal{D}(\ell')$. Nonetheless, the process stops because $\mathcal{D}$ is finite and, since the slope of $F_{k_1\cdots k_m}(\gamma)$ tends to $\pm\infty$ as $m\mapsto\infty$, we eventually end up in a cycle between maps $F_d$ for which $d$ are the right and left most translations in $\overline{\conv(\mathcal{D})}\setminus\conv(\mathcal{D})$.
    
\end{proof}
The system of equations that describes $\ext(A)$ for the last case considered in the proof are sensitive to the choice of translations $d\in\mathcal{D}$. Thus, we do not have a constant number of equations but we can only say it is finite. In Section \ref{subsubsect:realarbposneg} we provide an example for this case and we go through the process step-by-step.

\subsection{Examples}
\label{subsect:realarbitraryexamples}
In the next two examples we let $v=\exp(2\pi\I(\sqrt{5}-1)/2)$ and we consider \[\mathcal{D}=\{d_0=v^5,d_1=v^2,d_2=-0.2+v^7,d_3=v^4,d_4=0.5+v^3\}.\]
Note that we have already ordered the set $\mathcal{D}$ based on the argument of each element. The edges of $\conv(\mathcal{D})$ are the difference of two consecutive pairs: 
\[
\begin{split}
    \mathcal{D}(\ell_0)=\{d_0,d_1\}&\quad\mathcal{D}(\ell_1)=\{d_1,d_2\}\quad
    \mathcal{D}(\ell_2)=\{d_2,d_3\}\\ &\mathcal{D}(\ell_3)=\{d_3,d_4\}\quad\mathcal{D}(\ell_4)=\{d_4,d_0\}.    
\end{split}
\]

\subsubsection{Equal absolute value}
\label{subsubsect:realarbequal} Let $\mu<0$ with $\lvert\mu\rvert=\eta=2^{-0.5}$. From the proof of Theorem \ref{thm:mainrealarbitrary} we know that for every bounding line there is a symmetric one across the $y$-axis which contains an edge of $\conv(A)$. Since $\conv(\mathcal{D})$ has $5$ sides, none of which are vertical or horizontal, then $\conv(A)$ will have exactly $10$ sides. Each symmetric pair of sides has their own set of equations, see Figure \ref{fig:RealArbEqual}. We chose the value of $\eta$ so that the attractor is actually convex. 

\begin{figure}
    \centering
    \includegraphics[width=0.45\linewidth]{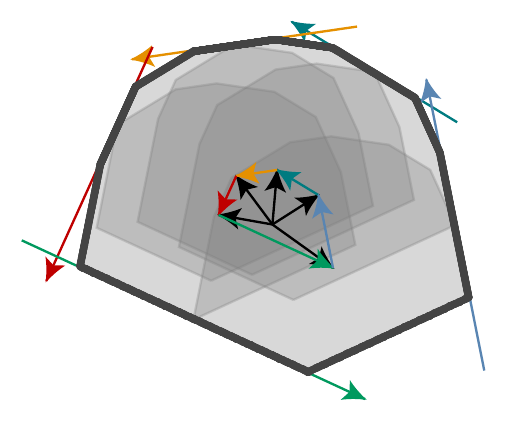}
    % \hspace{1cm}
    \begin{tikzpicture}
    
    \node [draw, circle] (110) at (110:4) (s0) {\small $B_{0,1}$};
    \node [draw, circle] (70) at (70:4) (s1) {\small $B_{1,1}$};
    \node [draw, circle] (30) at (30:4) (s2) {\small $B_{0,0}$};
    \node [draw, circle] (150) at (150:4) (s3) {\small $B_{1,0}$};
    \node [draw, circle] (180) at (180:4) (s4) {\small $B_{0,2}$};
    \node [draw, circle] (0) at (0:4) (s5) {\small $B_{1,2}$};
    \node [draw, circle] (330) at (330:4) (s6) {\small $B_{0,4}$};
    \node [draw, circle] (210) at (210:4) (s7) {\small $B_{1,4}$};
    \node [draw, circle] (250) at (250:4) (s8) {\small $B_{0,3}$};
    \node [draw, circle] (290) at (290:4) (s9) {\small $B_{1,3}$};
    
    \path[->] (s0) edge [bend left=30, above] node {$d_2$} (s1);
    \path[->] (s0) edge [bend left=0, above] node {$d_1$} (s1);
    \path[->] (s1) edge [bend left=10, below] node {$d_1$} (s0);
    
    \path[->] (s2) edge [bend right=15, above] node {$d_0$} (s3);
    \path[->] (s2) edge [bend right=0, above] node {$d_1$} (s3);
    \path[->] (s3) edge [bend right=10, below] node {$d_2$} (s2);
    
    \path[->] (s4) edge [bend left=15, above] node {$d_2$} (s5);
    \path[->] (s4) edge [bend left=0, above] node {$d_3$} (s5);
    \path[->] (s5) edge [bend left=10, below] node {$d_0$} (s4);
    
    \path[->] (s6) edge [bend right=15, above] node {$d_0$} (s7);
    \path[->] (s6) edge [bend right=0, above] node {$d_4$} (s7);
    \path[->] (s7) edge [bend right=10, below] node {$d_3$} (s6);
    
    \path[->] (s8) edge [bend right=30, below] node {$d_3$} (s9);
    \path[->] (s8) edge [bend right=0, below] node {$d_4$} (s9);
    \path[->] (s9) edge [bend right=10, above] node {$d_4$} (s8);
\end{tikzpicture}
    \caption{On the left, the attractor $A$ and its boundary, which coincides with $\ext(A)$, of the IFS from Section \ref{subsubsect:realarbequal}. On the right, the DGIFS. Note that the second index of the $B$ sets indicate the corresponding bounding line.}
    \label{fig:RealArbEqual}
\end{figure}

\subsubsection{Positive $\eta$ and negative $\mu$}
\label{subsubsect:realarbposneg} Let $\mu=-1/4$ and $\eta=2^{-0.5}$. The following discussion is easier to digest if it is accompanied by a picture. Some of the computation, like comparing two slopes, were done with the help of the computer.

Consider $\widehat\ell_1$ which is the bounding line whose slope is closest to $0$. By definition, it contains extreme points of $A$ that belong to $F_{d_1}(A)$ and $F_{d_2}(A)$. Since its slope is positive and $A$ is in its left half plane, the possible images of $\widehat\ell_1$ that contains $A$ in its right half plane are $F_{d_1}(\widehat\ell_1)$, $F_{d_0}(\widehat\ell_1)$, and $F_{d_4}(\widehat\ell_1)$. In the first case, $\widehat\ell_1\cap F_{d_1}(\widehat\ell_1)=\{p\}$ where $p$ is necessarily the fixed point of $F_{d_1}$. We conclude that $p$ is an extreme point of $A$ and consequently $p+(d_2-d_1)=F_{d_2}(p)\in\widehat\ell_1$ and $F_{d_1}(F_{d_2}(p))$ must also be extreme points. 

We remark also that $[p,F_{d_2}(p)]$ is the segment on $\widehat\ell_1$ intersecting $\conv(A)$ on an edge. The other two cases are thus reduced to checking whether the segments $F_{d_0}([p,F_{d_2}(p)])$ and $F_{d_4}([p,F_{d_2}(p)])$ are on $\partial\conv(A)$. The slope of $F_{d_0}([p,F_{d_2}(p)])$, which is negative and the same as $F_{d_1}([p,F_{d_2}(p)])$, is larger than $\widehat\ell_0$, a bounding line connecting some extreme points of $A$ in $F_{d_0}(A)$ to some on $F_{d_1}(A)$. Thus, we either have that $F_{d_0}(p)\in\widehat\ell_0$ or $F_{d_0d_2}(p)\in\widehat\ell_0$: the first case would imply that $F_{d_0d_2}(p)$ is to the right of $\widehat\ell_0$, which is impossible since $\widehat\ell_0$ is a bounding line; in the other case $F_{d_0}(p)$ would be in the interior of $\conv(A)$. Consequently $F_{d_0}([p,F_{d_2}(p)])$ is not an edge of $\conv(A)$ and neither is $F_{d_4}([p,F_{d_2}(p)])$ for similar reasons.

The segment $F_{d_2}(F_{d_1}([p,F_{d_2}(p)]))=F_{d_2}([p,F_{d_1d_2}(p)])$ intersects $\widehat\ell_1$ at $F_{d_2}(p)$, by construction. Its slope is smaller than that of $\widehat\ell_2$. Given that we established that $F_{d_2}(p)$ is an extreme point, if $F_{d_2}([p,F_{d_1d_2}(p)])$ were to intersect $\widehat\ell_2$ at $F_{d_2}(p)$, then the point $F_{d_2}(p)+(d_3-d_2)=F_{d_3}(p)$, along the line $\widehat\ell_2$, would be to the left of segment $F_{d_2}([p,F_{d_1d_2}(p)])$, making the segment $[F_{d_2}(p),F_{d_3}(p)]\subset\widehat\ell_2$ not an edge of $\conv(A)$. Therefore, the point $F_{d_2d_1d_2}(p)$ is a vertex of $\conv(A)$ and no other points of $F_{d_2}(A)$ lies on $F_{d_2d_1}(\widehat\ell_1)$. 

So far we have that the segments $I_0=[p,F_{d_2}(p)]\subset\widehat\ell_1$, $I_1=F_{d_1}([p,F_{d_2}(p)])$, and $I_2=F_{d_2d_1}([p,F_{d_2}(p)])$ are edges of $\conv(A)$. Now, the segment $F_{d_1}(I_2)$, whose left most point is the extreme point $F_{d_1d_2}(p)$, has slope even smaller than that of $I_1$ and $\widehat\ell_0$, so it cannot intersect $\conv(A)$ on an edge: $F_{d_1d_2d_1d_2}(p)$ is not an extreme point. We have two more options: $F_{d_0}(I_2)$ and $F_{d_4}(I_2)$. The latter needs to be excluded because its slope is larger than $\widehat\ell_4$. Therefore, me must have that $I_3=F_{d_0}(I_2)$ is an edge of $\conv(A)$. Note that $F_{d_0d_2}(p)$ lies on $F_{d_0}(\widehat\ell_1)$ which we discarded before for having a larger slope than $\widehat\ell_0$. However, $F_{d_1d_2}(p)=F_{d_0d_2}(p)+(d_1-d_0)$, so we conclude that $J_0=[F_{d_0d_2}(p),F_{d_1d_2}(p)]$ is the segment of $\widehat\ell_0$ on the boundary of $\conv(A)$.

The slope of $F_{d_2}(J_0)$ is smaller than that of $\widehat\ell_2$ and its right most endpoint we have found to be an extreme point. Therefore, $J_1=F_{d_2}(J_0)$ in another edge of $\conv(A)$ and $F_{d_2d_0d_2}(p)$ is an extreme point. This last extreme point is also on the line $\widehat\ell_2$: the left endpoint of $F_{d_3}(I_3)$, i.e. $F_{d_3d_0d_2}(p)$, is actually equal to $F_{d_2d_0d_2}(p)+(d_3-d_2)$; furthermore, $F_{d_2}(I_3)$, which intersects $J_1$ at $F_{d_2d_0d_2}(p)$, has a steeper slope than the one of $\widehat\ell_2$ which implies that it is not an edge of $\conv(A)$. We conclude then that $L_0=[F_{d_2d_0d_2}(p),F_{d_3d_0d_2}(p)]$ is an edge of $\conv(A)$ and a subset of $\widehat\ell_3$.

Given that slopes of the edges are getting steeper the only maps worth considering to find edges of $\conv(A)$ are $F_{d_4}$ and $F_{d_3}$. This means, for starters, we have $I_k=F_{d_3}(I_{k-1})$ and $I_{k+1}=F_{d_4}(I_{k})$ for $k\geq4$. Also the images of $L_0$ alternate between $F_{d_4}$ and $F_{d_3}$, since $\widehat\ell_4$ is less steep than $F_{d_0}(L_0)$. Let us consider the bounding line $\widehat\ell_3$, which connects some extreme points in $F_{d_3}(A)$ to some in $F_{d_4}(A)$. The line $F_{d_4}(\widehat\ell_3)$ must intersect $\widehat\ell_3$, so the fixed point, call it $q$, of $F_{d_4}$ is an extreme point. It follows that $q-(d_4-d_3)=F_{d_3}(q)\in\widehat\ell_3$ and $F_{d_4}(F_{d_3}(q))$ are other extreme points. The segment $[q,F_{d_3}(q)]$ is a subset of $\widehat\ell_3$ and only its images under $F_{d_3}$ and $F_{d_4}$ are edges of $\conv(A)$.

The only edges of the convex hull of $A$ left to find are the images of $J_1$ and those of a segment of $\widehat\ell_4$. The slope of $F_{d_0}(J_1)=F_{d_0}([F_{d_2d_0d_2}(p),F_{d_2d_1d_2}(p)])$ is more flat than the one of $\widehat\ell_4$ and $I_3\cap F_{d_0}(J_1)=\{F_{d_0d_2d_1d_2}(p)\}$. Thus $J_2=F_{d_0}(J_1)\subset\partial\conv(A)$ and $F_{d_0d_2d_0d_2}(p)\in\ext(A)$. Note that this last point is also on $\widehat\ell_4$ since $F_{d_4d_2d_0d_2}(p)=F_{d_0d_2d_0d_2}(p)+(d_4-d_0)$ and this point is on $L_1=F_{d_4}(L_0)$, whose slope is steeper than the one of $\widehat\ell_4$. Consequently, $[F_{d_4d_2d_0d_2}(p),F_{d_0d_2d_0d_2}(p)]\subset\widehat\ell_4$ is an edge of $\conv(A)$.

The resulting system of equations is shown on the right in Figure \ref{fig:RealArbPosNeg}. On the attractor we have highlighted with arrows the edges of $\conv(A)$ which are part of a bounding line.
\begin{figure}
    \centering
    \includegraphics[width=0.35\linewidth]{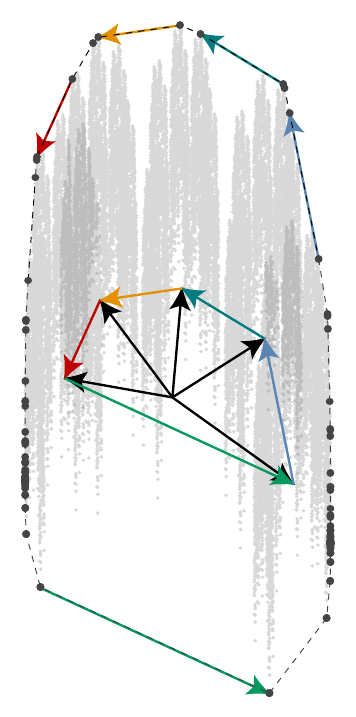}
    % \hspace{1cm}
    \begin{tikzpicture}
    
    \node [draw, circle] (90) at (90:4.5) (s0) {\small $B_{0}$};
    \node [draw, circle] (120) at (120:4) (s1) {\small $B_{1}$};
    \node [draw, circle] (60) at (60:4) (s2) {\small $B_{2}$};
    \node [draw, circle] (30) at (30:4) (s3) {\small $B_{3}$};
    \node [draw, circle] (150) at (150:4) (s4) {\small $B_{4}$};
    \node [draw, circle] (180) at (180:4) (s5) {\small $B_{5}$};
    \node [draw, circle] (0) at (0:4) (s6) {\small $B_{6}$};
    \node [draw, circle] (-30) at (-30:5) (s7) {\small $B_{7}$};
    \node [draw, circle] (210) at (210:5) (s8) {\small $B_{8}$};
    \node [draw, circle] (-60) at (-60:4.5) (s9) {\small $B_{9}$};
    \node [draw, circle] (240) at (240:5.5) (s10) {\small $B_{10}$};
    \node [draw, circle] (270) at (270:5.5) (s11) {\small $B_{11}$};
    
    \path[->] (s0) edge [loop above] node {$d_1$} (s0);
    \path[->] (s1) edge [left,above] node {$d_2$} (s0);
    \path[->] (s2) edge [above] node {$d_1$} (s1);
    \path[->] (s3) edge [bend right=5, above] node {$d_0$} (s1);
    \path[->] (s4) edge [bend right=10, above] node {$d_2$} (s2);
    \path[->] (s5) edge [above] node {$d_2$} (s3);
    \path[->] (s6) edge [bend right=10, above] node {$d_0$} (s4);
    \path[->] (s7) edge [bend right=10, above] node {$d_0$} (s5);

    \path[->] (s8) edge [above] node {$d_3$} (s3);
    \path[->] (s8) edge [bend right=8, above] node {$d_3$} (s6);
    \path[->] (s8) edge [above] node {$d_3$} (s7);
    \path[->] (s9) edge [bend right=8, above] node {$d_4$} (s8);
    \path[->] (s9) edge [bend right=10, above] node {$d_4$} (s5);
    \path[->] (s9) edge [bend right=20, above] node {$d_4$} (s10);
    \path[->] (s10) edge [above] node {$d_3$} (s9);
    \path[->] (s10) edge [above] node {$d_3$} (s11);
    \path[->] (s10) edge [bend right, below] node {$d_4$} (s11);
    \path[->] (s11) edge [loop below] node {$d_4$} (s11);
    
\end{tikzpicture}
    \caption{On the left, the elements of $\ext(A)$ of the IFS from Section \ref{subsubsect:realarbposneg}. On the right, the DGIFS describing them.}
    \label{fig:RealArbPosNeg}
\end{figure}

\section{The Jordan block case}
\label{sect:jordan}
The last family of homogeneous IFS that we will consider has a contracting matrix of the form $T=\begin{pmatrix}\eta&1\\0&\eta\end{pmatrix}$ for non-zero real $-1<\eta<1$. As in the previous section, the choice of translations influences which pairs of integers $(k,j)$ index the relevant bounding lines $\widehat{\ell}_{k,j}$ and, consequently, the structure of the DGIFS describing the set of extreme points. We proceed as before by restricting first to collinear translation vectors to build intuition and, then, deal with the arbitrary translations.

As far as we know, the convex hull and extreme points for this particular family of IFS has only been studied in \cite{HS16} when $n=2$ and $\eta>0$. Just like in Section \ref{sect:real}, we will see that the itineraries of the corner points for these IFS can be either periodic or pre-periodic.

\subsection{Collinear IFS}
\label{subsect:jordancollinear}
Fix $n\geq2$ and let $\vec{w}$ be any vector in $\mathbb{R}^2$ except thos along the axes. Consider the IFS $\Psi_n=\{F_k\}_{k=0}^{n-1}$ generated by the maps \[F_k(\vec{x})=T\vec{x}+(n-2k-1)\vec{w}\quad 0\leq k\leq n-1\]
and set $A$ to be the attractor of such an IFS.

\begin{theorem}
\label{thm:mainjordancollinear}
    The set of extreme points of the attractor associated to the IFS $\Psi_n=\{F_k\}_{k=0}^{n-1}$ is the unique vector solution to the following DGIFS
    \begin{itemize}
        \item if $\eta<0$,
        \[\begin{cases}
            B_0=F_0(B_2)&\\
            B_1=\bigcup_{k=0}^{n-2}F_k(B_2) \cup F_{n-1}(B_3)&\\
            B_2=F_{n-1}(B_0)&\\
            B_3=F_0(B_1) \cup \bigcup_{k=1}^{n-1}F_k(B_0)&
        \end{cases};\]
        \item if $\eta>0$,
        \[\begin{cases}
            B_0=\bigcup_{k=0}^{n-2}F_k(B_2) \cup F_{n-1}(B_0)&\\
            B_1=F_{n-1}(B_1)&\\
            B_2=F_0(B_2)&\\
            B_3=F_0(B_3) \cup \bigcup_{k=1}^{n-1}F_k(B_1)&
        \end{cases}.\]
    \end{itemize}
\end{theorem}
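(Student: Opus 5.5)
We follow the template of the proof of Theorem~\ref{thm:mainrealcollinear}, replacing the diagonal dynamics on line directions by the dynamics of the Jordan block. First we record the two symmetries used there. The attractor is symmetric about the origin, since $-F_k(-\vec{x})=F_{n-1-k}(\vec{x})$. The pieces $F_k(A)$ are translates of $F_0(A)$ along $\vec{w}$. By Proposition~\ref{prop:boundinglines}(iii) there are exactly two core lines $\ell_{1,0},\ell_{0,1}$, parallel to $\vec{w}$ with opposite normals $\theta_0,\theta_1=\theta_0+1/2$. Every bounding-line direction strictly between them is attained only by $F_0(A)$ or $F_{n-1}(A)$, the outermost copies. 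By Proposition~\ref{prop:extremeconvexcore}, every edge of $\conv(A)$ is an image of a segment of $\widehat\ell_{1,0}$ or $\widehat\ell_{0,1}$ under compositions of the $F_k$. So it suffices to track how $T$ acts on line directions.

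The key computation is the action of $T$ on directions. A line with direction $(x,y)$ is sent to direction $(\eta x+y,\eta y)$, so its slope $s=y/x$ becomes $s'=\eta s/(\eta s+1)$, equivalently $1/s'=1/s+1/\eta$. Iterating gives $1/s_m=1/s_0+m/\eta$, so $s_m\to0$. Every line direction therefore converges to the horizontal, which is the eigendirection of $T$. The approach is monotone from one side when $\eta>0$. When $\eta<0$ the orientation of the normal flips at each step, because $\det T=\eta^2>0$ but $T$ reverses the sense of the horizontal axis. Each flip swaps which side of the image line contains $A$, just as the sign of $\mu$ did in Section~\ref{subsect:realcollinear}.

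In the case $\eta>0$, take $\widehat\ell_{1,0}$ with $A$ on its left. Its images $F_k(\widehat\ell_{1,0})$ are rotated toward horizontal, and the argument for $0<\mu<\eta$ applies. $A$ lies on the correct side of $F_j(\widehat\ell_{1,0})$ only for one outermost index, which I expect is $j=n-1$ given the labels in the statement. Iterating $F_{n-1}$ produces a sequence of supporting lines converging to the horizontal line through the fixed point $p$ of $F_{n-1}$. Hence $p\in\ext(A)$. This gives the equation $B_1=F_{n-1}(B_1)$, and symmetrically $B_2=F_0(B_2)$ at the fixed point of $F_0$. The line $\widehat\ell_{0,1}$ must pass through the fixed point of $F_0$, because its images are flatter and bounded by the horizontal tangent. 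Therefore $F_k(\text{fixed point})$ for $k\le n-2$ lies on the bounding line, and adding the chain of $F_{n-1}$-images yields $B_0=\bigcup_{k\le n-2}F_k(B_2)\cup F_{n-1}(B_0)$. Symmetry then gives the $B_3$ equation.

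In the case $\eta<0$, each application of $F_k$ puts $A$ on the opposite side of the image line. As in the case $\mu<0<\eta$ with $\lvert\mu\rvert<\eta$, the admissible itineraries must alternate between $F_0$ and $F_{n-1}$. The two-step map $F_0F_{n-1}$ has linear part $T^2$, with positive eigenvalue $\eta^2$, so its fixed point and that of $F_{n-1}F_0$ become the limiting horizontal extreme points. These give $B_0=F_0(B_2)$ and $B_2=F_{n-1}(B_0)$. The bounding lines $\widehat\ell_{1,0}$ and $\widehat\ell_{0,1}$ pass through these points, which yields the unions in $B_1$ and $B_3$.

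I expect the main obstacle to be justifying which outermost index is admissible at each step. This depends on the sign of the slope of $\vec{w}$ relative to the shear direction and on whether the line rotates clockwise or counterclockwise toward horizontal. I would settle it by computing $h_A$ in the limiting horizontal direction directly. Since $T^{\mathrm T}(0,1)^{\mathrm T}=(0,\eta)^{\mathrm T}$, the identity $h_A(\theta)=\max_k\left(h_A(T^{\mathrm T}\theta)+\langle\theta,\vec v_k\rangle\right)$ at $\theta=(0,\pm1)$ pins down which map dominates the vertical extremes. This identifies the fixed points as the top and bottom points of $A$, and everything else follows from the supporting-line argument.
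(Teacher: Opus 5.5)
Your route is the paper's: the two symmetries, the fact that only $F_0$ or $F_{n-1}$ can support directions strictly between $\theta_0$ and $\theta_1$, the rotation of bounding lines toward the horizontal eigendirection of $T$, and fixed points of $F_0,F_{n-1}$ (resp.\ $F_0F_{n-1},F_{n-1}F_0$) as limiting tangency points. (Small slip: $s'=\eta s/(\eta+s)$; your reciprocal formula is the correct one.) However, the decisive step is not established and, as written, is inconsistent. That step is deciding which outermost map is admissible for which bounding line, and which line carries the $F_k$-images of which fixed point. You pick $j=n-1$ for $\widehat\ell_{1,0}$ by reading the labels of the statement. You then assert that $\widehat\ell_{0,1}$ passes through the fixed point $p$ of $F_0$ ``because its images are flatter and bounded by the horizontal tangent''. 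By your symmetry step, the supporting images of $\widehat\ell_{0,1}$ are then its $F_0$-images. If also $p\in\widehat\ell_{0,1}$, the sets $F_0^m(A\cap\widehat\ell_{0,1})$, $m\geq0$, each contain $p$ and another point, since $A\cap\widehat\ell_{0,1}$ meets every $F_k(A)$. They would therefore give infinitely many nondegenerate segments of $\partial\conv(A)$ through $p$ on pairwise non-parallel supporting lines, which is impossible for a convex set. Your derivation of $B_0$ also needs the points $F_k(p)$ on the line whose $F_{n-1}$-images are supporting, which in your setup is $\widehat\ell_{1,0}$, not $\widehat\ell_{0,1}$. With the paper's orientation (outward normal of $\widehat\ell_{1,0}$ at $\theta_0=\alpha+1/4$), the admissible map for $\widehat\ell_{1,0}$ is $F_0$. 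As the paper says, the tangent at $p$ then meets the \emph{other} line $\widehat\ell_{0,1}$. So your incidence claim is right and your guess $j=n-1$ is not. The stated $\eta>0$ system comes out right only because it is invariant under $\vec x\mapsto-\vec x$, which swaps $F_k\leftrightarrow F_{n-1-k}$, $B_0\leftrightarrow B_3$ and $B_1\leftrightarrow B_2$. For $\eta<0$ the incidence is merely asserted. Your remedy, evaluating the support identity at $\theta=(0,\pm1)$, finds the top and bottom points but not which points lie on the bounding lines.

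What closes the gap is the identity $h_{F_k(A)}(\theta)=h_A(T^{\mathrm T}\theta)+\langle\theta,\vec v_k\rangle$, applied at the bounding direction itself and along its $T^{\mathrm T}$-orbit. Write $\vec w=(w_1,w_2)$ and let $\nu=(-w_2,w_1)$ be the outward normal of $\widehat\ell_{1,0}$. Let $A_\mu$ be the set of points of $A$ where $\langle\mu,\cdot\rangle$ attains $h_A(\mu)$. Since $\langle\nu,\vec v_k\rangle=0$ for all $k$, we get $A_\nu=\bigcup_kF_k(A_{T^{\mathrm T}\nu})$. For $\langle\mu,\vec w\rangle\neq0$, $A_\mu=F_k(A_{T^{\mathrm T}\mu})$ with $k=0$ if $\langle\mu,\vec w\rangle>0$ and $k=n-1$ if it is negative. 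Now $\langle(T^{\mathrm T})^m\nu,\vec w\rangle=\langle\nu,T^m\vec w\rangle=-m\eta^{m-1}w_2^2$. This is negative for all $m\geq1$ when $\eta>0$, and alternates in sign starting negative when $\eta<0$. Hence $A\cap\widehat\ell_{1,0}=\{F_k(q)\}_{k=0}^{n-1}$, where $q$ is the fixed point of $F_{n-1}$ (resp.\ of $F_{n-1}F_0$); symmetrically for $\widehat\ell_{0,1}$. Going backward, $\langle\nu,T^{-m}\vec w\rangle=m\eta^{-m-1}w_2^2$. So for $\eta>0$, $A_{(T^{\mathrm T})^{-m}\nu}=F_0^m(A_\nu)$. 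For $\eta<0$, it is the image of $A_\nu$ under the alternating composition of $F_0$ and $F_{n-1}$ of length $m$ whose rightmost factor is $F_0$. These two facts give exactly the stated equations and settle the index question you left open.
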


\begin{proof}    
Just like in the real case, the attractor $A$ satisfies the following:
\begin{enumerate}[label=(\roman*)]
    \item the attractor is symmetric about $0$;
    \item the attractor has translational symmetry along the line through the origin spanned by $\vec{w}$: $F_k(A)$, for $0<k\leq n-1$, is a copy of $F_0(A)$ translated by $(2-2k)\vec{w}$;
\end{enumerate}
As for the IFS in Section \ref{sect:real} there are only two core lines $\ell_{1,0}$ and $\ell_{0,1}$ both parallel to $\vec{w}$, but with different directions. Let $\alpha$ be the angle that $\vec{w}$ makes with the horizontal axis; let $\theta_0=\alpha+1/4\mod1$ and $\theta_1=\theta_0+1/2\mod 1$ indicate the normal directions of each core line.
The symmetries of the attractor mentioned above imply that any edge of $\conv(A)$ at angle $\pm t\in(\theta_0,\theta_1)$ must go through extreme points belonging to either $F_0(A)$ or $F_{n-1}(A)$. Without loss of generality we can assume that $0<\alpha<1/4$ so that $1/4<\theta_0<1/2$.

Consider first the case of $\eta>0$. The image of the bounding line $\widehat\ell_{1,0}$ under any $F_k$ is less steep than $\widehat\ell_0$ due to $T$ being a positive upper triangular matrix and $\eta<1$. In fact, denoting by $t_k$ the outward normal direction of $T^k\widehat\ell_{1,0}$ we obtain \[1/4<\cdots<t_k<\cdots<t_2<t_1<\theta<1/2.\] Since $A$ is in the closed left half plane of $\widehat\ell_{1,0}$ and $T$ preserves orientation, then $A$ would be in the closed left half plane of $F_j(\widehat\ell_{1,0})$, only if $j=0$ because of the translational symmetry of the attractor. Repeating the argument, $A$ is in the closed left half plane of each $F_\omega(\widehat\ell_{1,0})$ for each finite sequence $\omega$ of the digit $0$. Taking the limit, the line $F_\omega(\widehat\ell_{1,0})$, where $\omega$ is the infinite sequence of repeating $0$, is horizontal and tangent to $A$. Therefore, the fixed point $p$ of $F_0$ must be an extreme point of $A$. Furthermore, this horizontal line must intersect the other bounding line $\widehat\ell_{0,1}$ at exactly $p$. We conclude that $F_k(p)$ are in $\widehat\ell_{0,1}$ for each $0\leq k<n$ and, by symmetry, that $F_k(q)\in\widehat\ell_{1,0}$ where $q=F_{n-1}(q)$. Let $B_k$ indicate a subset of $\ext(A)$ we have proved that 
\[
\begin{cases}
    B_0=\bigcup_{k=0}^{n-2}F_k(B_2) \cup F_{n-1}(B_0)&\\
    B_1=F_{n-1}(B_1)&\\
    B_2=F_0(B_2)&\\
    B_3=F_0(B_3) \cup \bigcup_{k=1}^{n-1}F_k(B_1)&
\end{cases}.
\]

Let now $\eta<0$ and denote by $t_k$ the outward normal direction of $T^k\widehat\ell_{1,0}$: simple computations show that \[0<t_1<t_3<\cdots<t_{2k+1}<\cdots<1/4<\theta_0<1/2<t_2<t_4<\cdots<t_{2k}<\cdots<3/4,\] Given that the linear part of $F_j$ is $T$, to make sure to have the attractor in the left half plane of $F_j(\widehat\ell_0)$, we want to alternate between $F_0$ and $F_{n-1}$. Observe that the bounding line $\widehat\ell_{0,1}$ must intersect $F_\omega(\widehat\ell_{1,0})$, for the repeating sequence $\omega$ of the digits $0$ and $n-1$, at the fixed point $p$ of $F_0\circ F_{n-1}$. Thus, $p$ and $F_k(p)$ for any $0\leq k<n$ are extreme points of $A$ on the bounding line $\widehat\ell_{0,1}$. By symmetry we have that $q$ and $F_k(q)$ for any $0\leq k<n$ are in $\ext(A)$ and on the line $\widehat\ell_{1,0}$, where $q=F_{n-1}(F_0(q))$.

The DGIFS can then be recovered from the above argument: let $B_0$ be the fixed point of $F_0\circ F_{n-1}$ and let $B_2$ be the fixed point of $F_{n-1}\circ F_0$, that is $B_0=F_0(B_2)$ and $B_2=F_{n-1}(B_0)$; the remaining extreme points are the images of these and, because of the symmetry across the $y$-axis, we need to alternate between $F_0$ and $F_{n-1}$, obtaining 
\[\begin{cases}
    B_0=F_0(B_2)&\\
    B_1=\bigcup_{k=0}^{n-2}F_k(B_2) \cup F_{n-1}(B_3)&\\
    B_2=F_{n-1}(B_0)&\\
    B_3=F_0(B_1) \cup \bigcup_{k=1}^{n-1}F_k(B_0)&
\end{cases}.\]
\end{proof}

\begin{corollary}
    In the case when $\eta<0$, the DGIFS has only one connected component but it is not strongly connected. In the other case, the DGIFS has two connected components but neither of them is strongly connected.
\end{corollary}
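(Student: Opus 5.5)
The plan is to read off the directed graph of the DGIFS directly from the two systems in Theorem~\ref{thm:mainjordancollinear} and check connectivity by hand. I use the convention of the earlier figures: there is an edge $B_u\to B_v$ with label $k$ whenever $F_k(B_v)$ appears on the right-hand side of the equation for $B_u$. Since $n\geq 2$, the unions $\bigcup_{k=0}^{n-2}$ and $\bigcup_{k=1}^{n-1}$ are non-empty, so every listed term really does contribute at least one edge. Multiplicities and labels play no role in connectivity, so I only record which ordered pairs of vertices are joined.

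\emph{Case $\eta<0$.} The edges are
\[
B_0\to B_2,\quad B_1\to B_2,\quad B_1\to B_3,\quad B_2\to B_0,\quad B_3\to B_1,\quad B_3\to B_0 .
\]
For weak connectivity, the underlying undirected graph contains the path $B_0 - B_2 - B_1 - B_3$, so all four vertices lie in a single component. For failure of strong connectivity, I observe that the only edge leaving $B_0$ goes to $B_2$, and the only edge leaving $B_2$ goes to $B_0$. Hence $\{B_0,B_2\}$ is a closed set of vertices, and no directed path from $B_0$ reaches $B_1$.

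\emph{Case $\eta>0$.} The edges are
\[
B_0\to B_2,\quad B_0\to B_0,\quad B_1\to B_1,\quad B_2\to B_2,\quad B_3\to B_3,\quad B_3\to B_1 .
\]
No edge joins $\{B_0,B_2\}$ to $\{B_1,B_3\}$ in either direction, so there are exactly two weakly connected components, $\{B_0,B_2\}$ and $\{B_1,B_3\}$. Within them, $B_2$ has only its self-loop, so it cannot reach $B_0$. Likewise $B_1$ has only its self-loop, so it cannot reach $B_3$. Hence neither component is strongly connected.

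There is no genuine obstacle here; the argument is pure bookkeeping. The only point requiring care is fixing the orientation convention for the edges. If the opposite orientation were used, every edge would be reversed, but weak connectivity and the failure of strong connectivity are both unchanged under reversal, so the conclusions hold either way.
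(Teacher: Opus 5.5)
Your proof is correct. The edge lists you read off from the two systems in Theorem~\ref{thm:mainjordancollinear} match Figures~\ref{fig:JordanNeg} and~\ref{fig:JordanPos}, and the arguments for the strong-connectivity claims are sound: $\{B_0,B_2\}$ is closed under out-edges when $\eta<0$, and $B_2$ and $B_1$ have only self-loops when $\eta>0$. The paper states this corollary without a written proof and intends exactly this direct inspection of the directed graph, so your argument is essentially the paper's.
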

\begin{remark}
    Once again the system of equations of each DGIFS in Theorem \ref{thm:mainjordancollinear} can not be decoupled. It follows, that even in this setting we have $B_k$ for $0\leq k\leq 3$ is not the attractor of an IFS.
\end{remark}

\subsection{Examples}
\label{subsect:jordancollinearexamples}
For the following examples we fix $w=1+\I$. We tried to find values of $\eta$ for which each component $F_k(A)$ of the attractor $A$ was visible enough.

\subsubsection{Positive $\eta$}
Let $n=4$ and set $\eta=17/40$ then Theorem \ref{thm:mainjordancollinear} tells us that the set of extreme points of $A$ corresponds to the language with symbols from $\{0,1,2,3\}$ defined by the sofic system shown in Figure \ref{fig:JordanPos}.
% \[\begin{cases}
%     B_0=F_0(B_2)\cup F_1(B_2)\cup F_2(B_2)\cup F_3(B_0)&\\
%     B_1=F_3(B_1)&\\
%     B_2=F_0(B_2)&\\
%     B_3=F_0(B_3)\cup F_1(B_1)\cup F_2(B_1)\cup F_3(B_1)&
% \end{cases}\]
\begin{figure}
    \centering
    \includegraphics[width=0.4\linewidth]{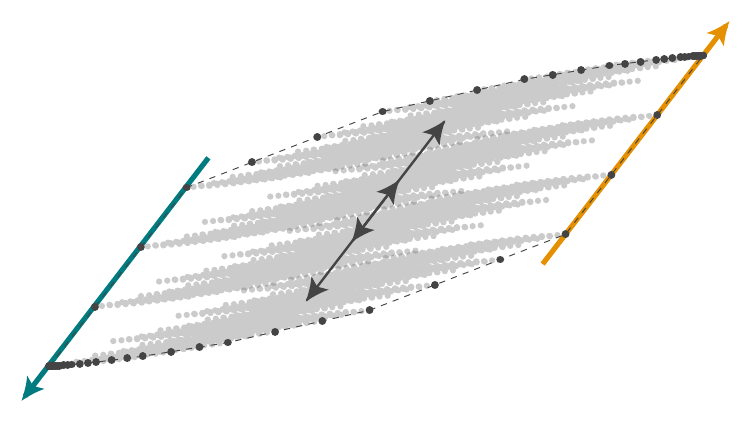}
    % \hspace{1cm}
    \begin{tikzpicture}
    
    \node [draw, circle] (0) at (0:2) (s0) {\small $B_0$};
    \node [draw, circle] (270) at (270:2) (s1) {\small $B_1$};
    \node [draw, circle] (90) at (90:2) (s2) {\small $B_2$};
    \node [draw, circle] (180) at (180:2) (s3) {\small $B_3$};
        
    \path[->] (s0) edge [bend right=60, above] node {$0$} (s2);
    \path[->] (s0) edge [bend right=30, above] node {$1$} (s2);
    \path[->] (s0) edge [ above] node {$2$} (s2);
    \path[->] (s0) edge [loop below] node {$3$} (s0);
    
    \path[->] (s1) edge [loop right] node {$3$} (s1);
    \path[->] (s2) edge [loop left] node {$0$} (s2);
    
    \path[->] (s3) edge [loop above] node {$0$} (s3);
    \path[->] (s3) edge [bend right=60, below] node {$1$} (s1);
    \path[->] (s3) edge [bend right=30, below] node {$2$} (s1);
    \path[->] (s3) edge [ below] node {$3$} (s1);
        
\end{tikzpicture}
    \caption{On the left the extreme points, the convex hull, and bounding set of the attractor of the IFS $\Psi_4$ when $\eta>0$. On the right the sofic system used to identify points in $\ext(A)$.}
    \label{fig:JordanPos}
\end{figure}

\subsubsection{Negative $\eta$}
Let $n=3$ and set $\eta=-3/4$, then by Theorem \ref{thm:mainjordancollinear} a point in $\ext(A)$ has an itinerary defined by an infinite walk on the graph in Figure \ref{fig:JordanNeg}.
% \[\begin{cases}
%     B_0=F_0(B_2)&\\
%     B_1=F_0(B_2)\cup F_1(B_2)\cup F_2(B_3)&\\
%     B_2=F_2(B_0)&\\
%     B_3=F_0(B_1)\cup F_1(B_0)\cup F_2(B_0)&
%\end{cases}.\]
\begin{figure}
    \centering
    \includegraphics[width=0.4\linewidth]{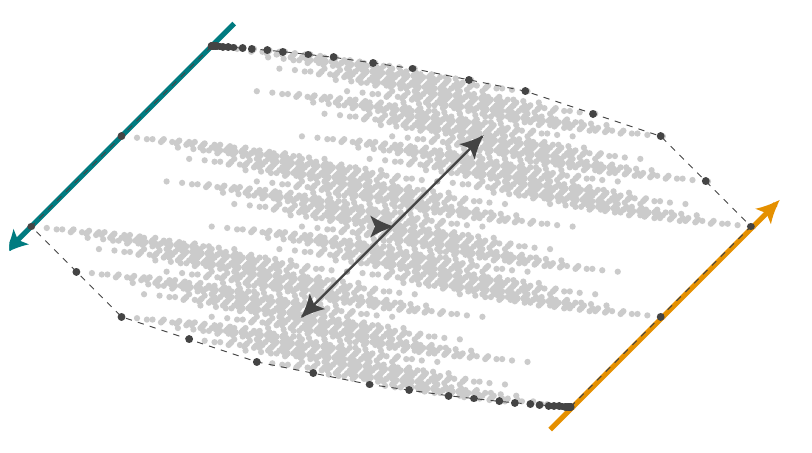}
    % \hspace{1cm}
    \begin{tikzpicture}
    
    \node [draw, circle] (90) at (90+15:2) (s0) {\small $B_0$};
    \node [draw, circle] (270) at (270+15:2) (s1) {\small $B_1$};
    \node [draw, circle] (180) at (180+15:2) (s2) {\small $B_2$};
    \node [draw, circle] (0) at (0+15:2) (s3) {\small $B_3$};
        
    \path[->] (s0) edge [bend right, left] node {$0$} (s2);
    \path[->] (s2) edge [bend right, right] node {$2$} (s0);
    \path[->] (s1) edge [bend right,above] node {$0$} (s2);
    \path[->] (s1) edge [bend left, below] node {$1$} (s2);
    \path[->] (s1) edge [bend right=30, right] node {$2$} (s3);
    \path[->] (s3) edge [bend right, left] node {$0$} (s1);
    \path[->] (s3) edge [bend left, below] node {$1$} (s0);
    \path[->] (s3) edge [bend right=30, above] node {$2$} (s0);
    
\end{tikzpicture}
    \caption{On the left the extreme points, the convex hull, and bounding set of the attractor of the IFS $\Psi_3$ when $\eta<0$. On the right the DGIFS for $\ext(A)$.}
    \label{fig:JordanNeg}
\end{figure}

\subsection{Arbitrary translations}
\label{subsect:jordanarbitrary}
Let $\mathcal{D}$ be a discrete subset of $\mathbb{R}^2$ of size $n$. Denote by $\Psi_n=\{F_d\}_{d\in\mathcal{D}}$ the IFS of affine transformations \[F_d(\vec{x})=T\vec{x}+\vec{v}_d=\begin{pmatrix}\eta&1\\0&\eta\end{pmatrix}\vec{x}+\vec{v}_d\] where $-1<\eta<1$ is non-zero. From Proposition \ref{prop:extremeconvexcore} the bounding lines intersect $\conv(A)$ along a specific edge. It is impossible to obtain the exact system of equations that describe the set of extreme points of $A$ without knowing the shape of the of the bounding set, which is a scaled version of $\conv(\mathcal{D})$. However, Proposition \ref{prop:extremeconvexcore} guarantees that such system exists and what we will show is that this system is necessarily finite. As we saw in the real case, to deal with arbitrary translations we have to pay particular attention to the different slopes of each bounding line.
\begin{theorem}
    \label{thm:mainjordanarbitrary}
    Fix integers $n\geq2$ and consider the IFS $\Psi_n$. The extreme points of the limit set $A$ are described by a DGIFS with finitely many equations and functions $F_d$ where $d\in\overline{\conv(\mathcal{D})}\setminus\conv(\mathcal{D})$.
\end{theorem}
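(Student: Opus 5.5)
The plan is to mimic the proofs of Theorem \ref{thm:mainrealarbitrary} and Theorem \ref{thm:mainjordancollinear}, replacing the diagonal action of $T$ on slopes by the action of the Jordan block. By Proposition \ref{prop:extremeconvexcore}, every edge of $\conv(A)$ is contained in the image $F_{w_0}\circ\cdots\circ F_{w_k}(\widehat\gamma)$ of an edge $\widehat\gamma$ of $\widehat{\conv}(A)$. Each such $\widehat\gamma$ lies on a bounding line $\widehat\ell$ parallel to a core line $\ell$ that contains an edge of $\conv(\mathcal{D})$. By Proposition \ref{prop:boundinglines}(iii), the only maps whose images touch $\widehat\ell$ are the $F_d$ with $d\in\mathcal{D}(\ell)$, and these lie on $\partial\conv(\mathcal{D})$. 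This already gives the restriction on the functions appearing in the DGIFS. It remains to show that, starting from the finitely many $\widehat\gamma$, only finitely many distinct edges (hence vertices of the graph) arise.

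The key step is to study how $T$ acts on directions. The transpose $T^{\mathrm T}$ acts on normals, and on the projective line its only fixed direction is the vertical normal, i.e.\ horizontal lines. First I would compute the slope map explicitly. If $\widehat\ell$ has direction $(1,s)$, then $T(1,s)^{\mathrm T}=(\eta+s,\eta s)$, so the new slope is $s'=\eta s/(\eta+s)$, equivalently $1/s'=1/s+1/\eta$. Iterating gives $1/s_k=1/s+k/\eta$, so $s_k\to 0$ monotonically in the reciprocal, and horizontal lines are fixed. This is the parabolic analogue of the computation in the proof of Theorem \ref{thm:mainjordancollinear}. When $\eta<0$, orientation is preserved but the reciprocal slope decreases, and the side on which $A$ lies alternates exactly as in that proof.

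Next, for each edge direction, I would record which core directions lie between the current slope and the horizontal, in the cyclic order of normals of $\conv(\mathcal{D})$. The image $F_{d}(\gamma)$ can be an edge of $\conv(A)$ only if its normal lies in the dominance arc $D_d$ and the attractor stays on the correct side of it. As in the example of Section \ref{subsubsect:realarbposneg}, once $1/s_k$ passes the reciprocal slope of the next core line, we switch to that core line's extreme digits. Since $1/s_k$ moves by the fixed amount $1/\eta$ at each step, each of the finitely many arcs between consecutive core normals is crossed in finitely many steps. After that the normals converge to the horizontal, where only the digit(s) extremal in the vertical direction survive. If that digit is unique, its fixed point is an extreme point and we get an equation $B=F_d(B)$. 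If several digits are extremal, we get $B=\bigcup F_d(B)$ over a horizontal edge of $\conv(\mathcal{D})$, exactly as in the horizontal case of Theorem \ref{thm:mainrealcarpet}. For $\eta<0$, the limits alternate between top and bottom, and the cycle is a 2-cycle as in Theorem \ref{thm:mainjordancollinear}. Either way, each orbit of edges is eventually periodic with bounded preperiod, so the graph is finite.

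The main obstacle is making the step ``only finitely many images remain on $\partial\conv(A)$ before the cycle'' rigorous without an explicit picture. Concretely, I must exclude that infinitely many distinct images $F_\omega(\widehat\gamma)$ with long non-periodic words $\omega$ contribute edges. My first attempt would be to bound $k$ uniformly. An edge at normal $t$ must satisfy $t\in D_{w_0}$. Because $T^{\mathrm T}$ maps each arc between consecutive core normals into a strictly later position by a definite amount in the reciprocal-slope coordinate, away from the horizontal fixed direction, the word $\omega$ must be constant (or alternating, if $\eta<0$) once its normal is within the last dominance arc. The remaining earlier part of $\omega$ has length at most the number of arcs times $\lceil |\eta|\,\Delta\rceil$, where $\Delta$ is the largest gap in reciprocal slopes. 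This yields finitely many vertices $B$ and hence a finite DGIFS.
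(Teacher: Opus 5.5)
Your strategy is the paper's, and your formula $1/s'=1/s+1/\eta$ sharpens the paper's monotonicity claim. In both, the normals of $T^k\widehat\ell$ are driven monotonically to the horizontal direction (alternating between top and bottom when $\eta<0$), the labels end in a cycle of period $1$ or $2$, and fixed (or period-$2$) points of the vertically extremal digits are extreme.

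\textbf{The finiteness step is false as stated.} Put $A^{\psi}=\{x\in A:\langle\psi,x\rangle=h_A(\psi)\}$ and let $M(\psi)$ be the set of $d\in\mathcal{D}$ maximizing $\langle\psi,d\rangle$. The identity $h_{F_d(A)}(\theta)=h_A(T^{\mathrm T}\theta)+\langle\theta,d\rangle$ gives $A^{\theta}=\bigcup_{d\in M(\theta)}F_d(A^{T^{\mathrm T}\theta})$. Hence, for every $k\geq0$, the face at the outward normal of $T^k\widehat\ell$ contains an affine copy of the face on $\widehat\ell$, which has at least two points, so it is an edge of $\conv(A)$. By your own formula these normals are pairwise distinct when $\widehat\ell$ is not horizontal. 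So $\conv(A)$ has infinitely many edges accumulating at the horizontal supporting lines. There are not ``finitely many distinct edges'', the tail edges $F_{d^*}^k(\gamma)$ never repeat, and $k$ cannot be bounded uniformly.

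Only the labels become eventually constant (or alternating). To get a finite graph you must put the whole tail into one vertex carrying the loop, $C=F_{d^*}(C)\cup F_{d^*}(B_{\mathrm{last}})$, where $B_{\mathrm{last}}$ is the last edge before the labels stabilize; when $\eta<0$ this becomes two vertices in a $2$-cycle. This is exactly what $B_0=\bigcup_{k=0}^{n-2}F_k(B_2)\cup F_{n-1}(B_0)$ does in Theorem \ref{thm:mainjordancollinear} (case $\eta>0$). Your equation $B=F_{d^*}(B)$ has the single fixed point as its attractor and does not encode these edges.

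\textbf{Your graph has no equations for its roots.} Proposition \ref{prop:extremeconvexcore} only says that edges are images of the faces $A^{\theta_j}$ on the bounding lines. A DGIFS also needs $A^{\theta_j}=\bigcup_{d\in\mathcal{D}(\ell_j)}F_d(A^{T^{\mathrm T}\theta_j})$, and you never identify $A^{T^{\mathrm T}\theta_j}$. You get it by running your map backwards, $1/s\mapsto 1/s-1/\eta$. For $\eta>0$ the normal then tends to the horizontal direction opposite to the one approached by the images $T^k\widehat\ell_j$. After finitely many steps $M$ is the opposite extremal digit $d_*$, so $A^{T^{\mathrm T}\theta_j}$ is a preperiodic corner ending in the fixed point of $F_{d_*}$ (a period-$2$ point when $\eta<0$). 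In Theorem \ref{thm:mainjordancollinear} this is the vertex $B_2=F_0(B_2)$ feeding $B_0$. If that backward orbit meets another core normal, you instead get an image of another bounding-line face, and the number of vertices is still finite.

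The same identity also removes your ``main obstacle''. The digit at each normal is forced to be $M(\omega)$, and the edge normals are exactly the outward normals of the images $T^k\widehat\ell_j$. So no search over words is needed.
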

\begin{proof}
    Let $\eta>0$. Consider the bounding line $\ell$ which has the smallest outward normal direction $\theta$ in the arc $(1/4,1/2)$. Given that $\eta<1$ and $T$ is a positive upper triangular matrix, we can deduce that  \[1/4<\cdots<t_k<\cdots<t_2<t_1<\theta<1/2\] where $t_k$ indicates the normal direction of $T^k\ell$. We can assume that $\conv(\mathcal{D})$ is a polygon and not a strip, otherwise refer to Theorem \ref{thm:mainjordancollinear}. Therefore, we can find bounding lines whose normal direction are positive and $\leq1/4$. In fact, we want the bounding line $\ell'$ with the maximal such normal direction. Then, $T^k\ell\cap\ell'\neq\emptyset$ for each $k\geq0$ and, in particular, the attractor is in the closed left half plane  of $F^k_d(\ell')$ for $d\in\mathcal{D}(\ell')$ with the largest argument. It follows that segments of $F_d^k(\ell)$ are edges of $\conv(A)$ and one extreme point of $A$ is the fixed point of $F_d$. 

    We started with the assumption that $\theta$ was the smallest outward normal direction greater than $1/4$. However, the above argument can be applied to any other bounding line at angle between $1/4$ and $3/4$. This means that edges of $\conv(A)$ in the upper half plane are eventually covered by images of these bounding lines under $F_d$. A symmetric argument, deals with the bounding lines at angles in $(3/4,1/4)$.

    Let now $\eta<0$. The argument is similar to the previous case except that now $t_{2k+1}$ tends to $1/4$ and $t_{2k}$ to $3/4$, so we have the fixed points of $F_{dd'}$ and $F_{d'd}$ as extreme points for some pair $d,d'\in\overline{\conv(\mathcal{D})}\setminus\conv(\mathcal{D})$.

    For both cases, $\eta>0$ and $\eta<0$, to find the equations for the DGIFS is just a matter of comparing the slope of the images of bounding lines until we end in a cycle of either period $1$ or period $2$, respectively.
    
\end{proof}

\subsection{Examples}
\label{subsect:jordanarbitraryexamples}
For the examples we reuse the setting from the real arbitrary case: let $v=\exp(2\pi\I(\sqrt{5}-1)/2)$ and we consider the set of translations \[\mathcal{D}=\{d_0=v^5,d_1=v^2,d_2=-0.2+v^7,d_3=v^4,d_4=0.5+v^3\}\] ordered by argument. Recall the edges of $\conv(\mathcal{D})$ are 
\[
\begin{split}
    \mathcal{D}(\ell_0)=\{d_0,d_1\}&\quad\mathcal{D}(\ell_1)=\{d_1,d_2\}\quad
    \mathcal{D}(\ell_2)=\{d_2,d_3\}\\ &\mathcal{D}(\ell_3)=\{d_3,d_4\}\quad\mathcal{D}(\ell_4)=\{d_4,d_0\}.    
\end{split}
\]

\subsubsection{Positive $\eta$}
\label{subsubsect:jordanarbitrarypositive} Let $\eta=0.25$. Following the argument in the proof of Theorem \ref{thm:mainjordanarbitrary}, we look for the bounding lines with normal direction between $1/4$ and $3/4$:  these are in order $\widehat\ell_{1}$, $\widehat\ell_{2}$, $\widehat\ell_{3}$. The bounding line $\widehat\ell_{0}$ is the one with largest normal direction $\leq1/4$. This means that an extreme point of $A$ is the fixed point of $F_{d_1}$ and that, eventually, the images of $\widehat\ell_{k}$ that contain edges of $\conv(A)$ are those under $F_{d_1}$. Similarly, the fixed point of $F_{d_4}$ is in $\ext(A)$. Let $p=F_{d_1}(p)$ and $q=F_{d_4}(q)$, then 
\begin{align*}
p-(d_1-d_0)=F_{d_0}(p),&\quad& F_{d_0}(p)-(d_0-d_4)=F_{d_4}(p),\\
q+(d_3-d_4)=F_{d_3}(q),&\quad& F_{d_3}(q)-(d_3-d_2)=F_{d_2}(q)
\end{align*}
are necessarily other extreme points because of the shape of $\conv(\mathcal{D})$. In fact, as the equations show we conclude
\begin{align*}
[F_{d_0}(p),p]\subset\widehat\ell_1,&\quad&[F_{d_4}(p),F_{d_0}(p)]\subset\widehat\ell_4,\\
\,[q,F_{d_3}(q)]\subset\widehat\ell_3,&\quad&[F_{d_3}(q),F_{d_2}(q)]\subset\widehat\ell_2
\end{align*}
are edges of $\conv(A)$. What is left to do now is iterate these edges and pay attention to their slope. The resulting system of equations is given in Figure \ref{fig:JordanArbPos}.

\begin{figure}[t]
    \centering
    \includegraphics[width=0.4\linewidth]{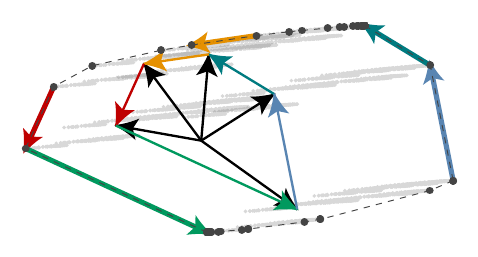}
    % \hspace{1cm}
    \begin{tikzpicture}
    
    \node [draw, circle] (30) at (30:4) (s0) {\small $B_{0}$};
    \node [draw, circle] (-30) at (-30:5) (s1) {\small $B_{1}$};
    \node [draw, circle] (265) at (265:4) (s2) {\small $B_{2}$};
    \node [draw, circle] (250) at (250:4) (s3) {\small $B_{3}$};
    \node [draw, circle] (210) at (210:4) (s4) {\small $B_{4}$};
    \node [draw, circle] (180) at (180:3) (s5) {\small $B_{5}$};
    \node [draw, circle] (150) at (150:4) (s6) {\small $B_{6}$};
    \node [draw, circle] (110) at (110:3) (s7) {\small $B_{7}$};
    \node [draw, circle] (90) at (90:4) (s8) {\small $B_{8}$};
    \node [draw, circle] (50) at (50:4) (s9) {\small $B_{9}$};
    
    \path[->] (s0) edge [loop left, left] node {$d_1$} (s0);
    \path[->] (s1) edge [bend right=0, right] node {$d_1$} (s0);
    \path[->] (s1) edge [bend right=30, right] node {$d_0$} (s0);
    \path[->] (s1) edge [bend right=60, right] node {$d_4$} (s0);
    
    \path[->] (s2) edge [right] node {$d_4$} (s1);
    \path[->] (s2) edge [loop above, above] node {$d_4$} (s3);
    
    \path[->] (s3) edge [loop above, above] node {$d_4$} (s3);
    
    \path[->] (s4) edge [below] node {$d_3$} (s3);
    
    \path[->] (s6) edge [left] node {$d_2$} (s4);
    
    \path[->] (s7) edge [left] node {$d_2$} (s5);
    
    \path[->] (s8) edge [above] node {$d_2$} (s6);
    \path[->] (s8) edge [bend right, above] node {$d_1$} (s6);
    
    \path[->] (s9) edge [loop right, right] node {$d_1$} (s9);
    \path[->] (s9) edge [above] node {$d_1$} (s8);
    \path[->] (s9) edge [above] node {$d_1$} (s7);
\end{tikzpicture}
    \caption{On the left, the elements of $\ext(A)$ of the IFS $\Psi_5$ with $\eta>0$. On the right, the DGIFS describing them.}
    \label{fig:JordanArbPos}
\end{figure}

\subsubsection{Negative $\eta$}
\label{subsubsect:jordanarbitrarynegative}
Let $\eta=-0.25$. Here we have to look for a pair of periodic points of period $2$: given the slope of $\ell_1$ and $\ell_4$, the extreme point are $p=F_{d_1d_4}(p)$ and $q=F_{d_4d_1}(q)$. It follows that 
\[
\begin{split}
p+(d_2-d_1)=F_{d_2}(p),&\quad F_{d_2}(p)+(d_3-d_2)=F_{d_3}(p),\quad F_{d_3}(p)+(d_4-d_3)=F_{d_4}(p),\\
q+(d_0-d_4)=F_{d_0}(q),&\quad F_{d_0}(q)+(d_1-d_0)=F_{d_1}(q)
\end{split}
\]
are extreme points of $A$ and $[p,F_{d_2}(p)]\subset\ell_1$, $[F_{d_2}(p),F_{d_3}(p)]\subset\ell_2$, $[F_{d_3}(p),F_{d_4}(p)]\subset\ell_3$, $[q,F_{d_0}(q)]\subset\ell_4$, $[F_{d_0}(q),F_{d_1}(q)]\subset\ell_0$ are edges of $\conv(A)$. Iterating these we obtain the DGIFS in Figure \ref{fig:JordanArbNeg}. 
\begin{figure}[t]
    \centering
    \includegraphics[width=0.45\linewidth]{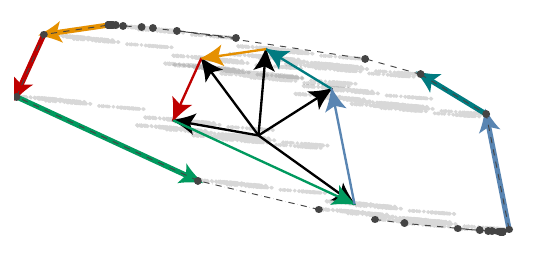}
    % \hspace{1cm}
    \begin{tikzpicture}
    
    \node [draw, circle] (110) at (110:4) (s0) {\small $B_{0}$};
    \node [draw, circle] (310) at (310:4) (s1) {\small $B_{1}$};
    \node [draw, circle] (150) at (150:4) (s2) {\small $B_{2}$};
    \node [draw, circle] (10) at (10:4.55) (s3) {\small $B_{3}$};
    \node [draw, circle] (345) at (345:4.35) (s4) {\small $B_{4}$};
    \node [draw, circle] (180) at (180:4) (s5) {\small $B_{5}$};
    \node [draw, circle] (210) at (210:4) (s6) {\small $B_{6}$};
    \node [draw, circle] (250) at (250:4.5) (s7) {\small $B_{7}$};
    \node [draw, circle] (60) at (60:5) (s8) {\small $B_{8}$};
    \node [draw, circle] (290) at (290:6) (s9) {\small $B_{9}$};
    
    \path[->] (s0) edge [bend right=10,left] node {$d_1$} (s1);
    \path[->] (s1) edge [bend right,right] node {$d_4$} (s0);
    
    \path[->] (s2) edge [left] node {$d_2$} (s1);
    
    \path[->] (s3) edge [bend right, above] node {$d_0$} (s2);
    
    \path[->] (s4) edge [bend left=40, below] node {$d_0$} (s2);
    
    \path[->] (s5) edge [left] node {$d_3$} (s2);
    \path[->] (s5) edge [bend left=40, above] node {$d_3$} (s4);
    
    \path[->] (s6) edge [bend right=20, below] node {$d_4$} (s4);
    
    \path[->] (s7) edge [bend right=70, below] node {$d_4$} (s3);
    
    \path[->] (s8) edge [bend right=90, above] node {$d_1$} (s2);
    \path[->] (s8) edge [bend right=80, above] node {$d_1$} (s5);
    \path[->] (s8) edge [bend right=30, above] node {$d_1$} (s6);
    \path[->] (s8) edge [bend left=50, right] node {$d_1$} (s7);
    \path[->] (s8) edge [bend left=60, right] node {$d_1$} (s9);
    
    \path[->] (s9) edge [bend right=95, right] node {$d_4$} (s8);
\end{tikzpicture}
    \caption{On the left, the elements of $\ext(A)$ of the IFS $\Psi_5$ with $\eta<0$. On the right, the DGIFS used to plot them.}
    \label{fig:JordanArbNeg}
\end{figure}

\section{Beyond the Homogeneous Setting}
\label{sect:future}
We conclude by considering some non-homogeneous IFS. These examples suggest that the results of the previous sections extend beyond the homogeneous setting, and the arguments required are surprisingly similar in spirit. We leave the verification that the given DGIFS describes the set of extreme points as a challenge to the reader, and hope these examples instigate further investigation into the non-homogeneous case. 

\subsection{Fibonacci IFS}
This is a one-parameter family of IFS studied in \cite{ST15}. Let $\lambda\in\mathbb{D}\setminus\{0\}$ then the IFS is defined by the similarities \[f_0(z)=\lambda z\quad\text{and}\quad f_1(z)=\lambda^2z+1.\]
For our example we fix $\lvert\lambda\rvert=2^{-1/2}$ and $\arg(\lambda)=2\pi/4$, see Figure \ref{fig:Fibonacci}. The attractor $A$ is a dendrite and the convex hull is a pentagon. The set of extreme points is given by \[\begin{cases}
    B_0=f_0(B_3)\\
    B_1=f_1(B_3)\\
    B_2=f_1(B_0)\cup f_0(B_1)\\
    B_3=f_0(B_2)
\end{cases}\]
\begin{figure}[!htb]
    \centering
    \includegraphics[width=0.4\linewidth]{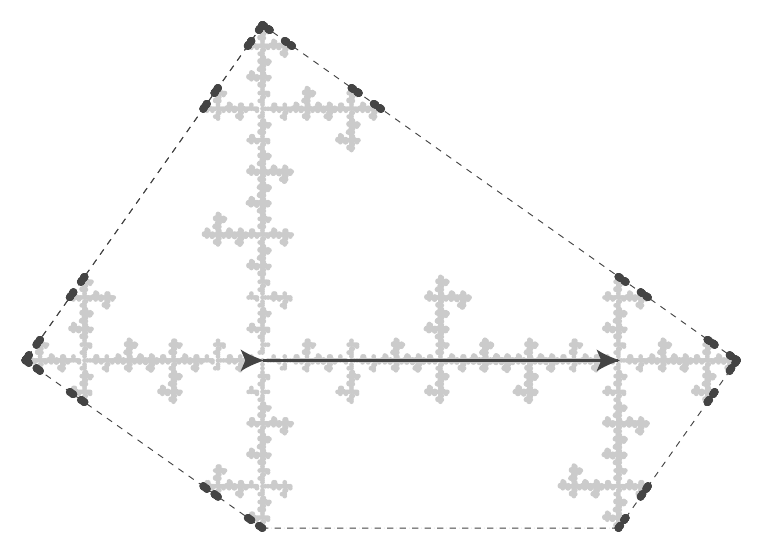}
    \caption{The attractor of a Fibonacci IFS with the points of $\ext(A)$ marked.}
    \label{fig:Fibonacci}
\end{figure}
We conjecture, based on experimental evidence and the result of \cite{DT05}, that a theorem similar to Theorem \ref{thm:maincplxpolygon} applies for this family of IFS and for the $N$-bonacci IFS $\{f_j(z)\}_{j=0}^{N-1}$ generalization where
\[f_0(z)=\lambda z,\quad f_k(z)=\lambda^{k+1}z+\sum_{j=0}^{k-1}\lambda^j\quad 1\leq k\leq N-1.\]

One thing to notice that differs from the homogeneous case is that the number of equations in the DGIFS is smaller than the number of sides of $\conv(A)$. The missing side however is nonetheless a bounding line since it is parallel to the translation vector of $f_1(z)$, which suggests that a slightly modified version of Proposition \ref{prop:extremeconvexcore} might hold for the non-homogeneous case.

\subsection{Symmetric Binary Tree IFS}
Here we consider another one-parameter family of IFS studied in \cite{BH89,MF99}. Let $\lambda\in\mathbb{D}\setminus\{0\}$ then the IFS is defined by the similarities \[f_0(z)=\lambda z+1\quad\text{and}\quad f_1(z)=\overline{\lambda}z-1.\]
If we fix $\lvert\lambda\rvert=2^{-0.5}$ and $\arg(\lambda)=2\pi/8$, then we obtain the famous L{\'e}vy curve \cite{L38}, see the top row of Figure \ref{fig:LevyKochCurve}. If we fix $\lvert\lambda\rvert=3^{-0.5}$ and $\arg(\lambda)=2\pi\cdot5/12$, then we obtain the famous Koch curve \cite{K04}, see the bottom row of Figure \ref{fig:LevyKochCurve}. It can be proved that when $\lambda$ is on the boundary of the connectedness locus of $A$, then the DGIFS describing $\ext(A)$ has finitely many equations: if $(2\pi)^{-1}\arg(\lambda)\in(1/(2k+2),1/(2k)]$ for $k\geq2$, then it has $2k$ equations; while if $(2\pi)^{-1}\arg(\lambda)\in(1/4,1/2)$ it has three.
\begin{figure}[!htb]
    \centering
    \includegraphics[width=0.4\linewidth]{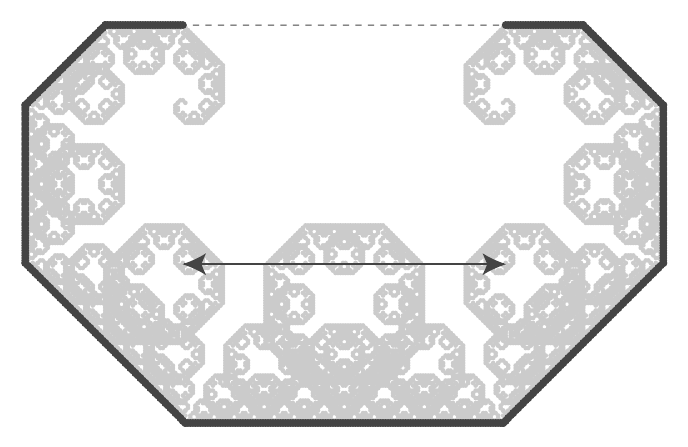}
    % \hspace{1cm}
    \begin{tikzpicture}
    
    \node [draw, circle] (90) at (90:2) (s0) {\small $B_0$};
    \node [draw, circle] (135) at (135:2) (s1) {\small $B_1$};
    \node [draw, circle] (45) at (45:2) (s2) {\small $B_2$};
    \node [draw, circle] (180) at (180:2) (s3) {\small $B_3$};
    \node [draw, circle] (0) at (0:2) (s4) {\small $B_4$};
    \node [draw, circle] (225) at (225:2) (s5) {\small $B_5$};
    \node [draw, circle] (270) at (270:2) (s6) {\small $B_6$};
    \node [draw, circle] (315) at (315:2) (s7) {\small $B_7$};
        
    \path[->] (s0) edge [midway, above] node {$1$} (s1);
    \path[->] (s0) edge [midway, above] node {$0$} (s2);
    \path[->] (s1) edge [midway, left] node {$1$} (s3);
    \path[->] (s2) edge [midway, right] node {$0$} (s4);
    \path[->] (s3) edge [midway, left] node {$1$} (s5);
    \path[->] (s4) edge [midway, right] node {$0$} (s7);
    \path[->] (s5) edge [bend right, below] node {$1$} (s6);
    \path[->] (s6) edge [bend right, above] node {$0$} (s5);
    \path[->] (s6) edge [bend left, above] node {$1$} (s7);
    \path[->] (s7) edge [bend left, below] node {$0$} (s6);
    
\end{tikzpicture}
    \includegraphics[width=0.45\linewidth]{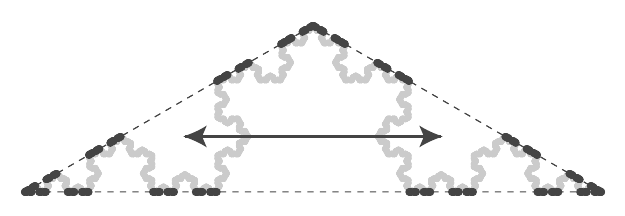}
    % \hspace{1cm}
    \begin{tikzpicture}
    
    \node [draw, circle] (225) at (225:2) (s0) {\small $B_0$};
    \node [draw, circle] (270) at (270:2) (s1) {\small $B_1$};
    \node [draw, circle] (315) at (315:2) (s2) {\small $B_2$};
        
    \path[->] (s0) edge [bend right, below] node {$1$} (s1);
    \path[->] (s1) edge [bend right, above] node {$0$} (s0);
    \path[->] (s1) edge [bend left, above] node {$1$} (s2);
    \path[->] (s2) edge [bend left, below] node {$0$} (s1);
    
\end{tikzpicture}
    \caption{The L{\'e}vy and the Koch curves with their respective extreme points. Their itinearies corresponds to infinite walks on the directed graphs shown on the right.}
    \label{fig:LevyKochCurve}
\end{figure}

When $\lambda$ is in the interior of the connectedness locus of $A$, however, it gets more complicated for reasons we have yet to understand. It would be also interesting to extend the above discussion to the symmetric $N$-ary tree introduced in \cite{E13}.

\subsection{Kiesswetter curve}
The Kiesswetter curve \cite{K66} is an example of a continuous nowhere differentiable function on $[0,1]$. Edgar \cite{E89} showed that such curve is also the attractor of a non-homogeneous IFS $\{F_j(\vec{x})\}_{j=0}^3$ where
\begin{align*}
F_0(\vec{x})=\begin{pmatrix}
    1/4 &0\\0&-1/2
\end{pmatrix}\vec{x},\quad F_1(\vec{x})=\begin{pmatrix}
    1/4 &0\\0&1/2
\end{pmatrix}\vec{x}+\begin{pmatrix}
    1/4\\-1/2
\end{pmatrix},
\\ F_2(\vec{x})=\begin{pmatrix}
    1/4 &0\\0&1/2
\end{pmatrix}\vec{x}+\begin{pmatrix}
    1/2\\0
\end{pmatrix},\quad  F_3(\vec{x})=\begin{pmatrix}
    1/4 &0\\0&1/2
\end{pmatrix}\vec{x}+\begin{pmatrix}
    3/4\\1/2
\end{pmatrix}.    
\end{align*}
The attractor $A$ lives in the rectangle $[0,1]\times[-1,1]$. Note that the functions $F_1,F_2,$ and $F_3$ share the same linear factor and their translation vectors are collinear. The convex hull is not a finite polygon, but $\ext(A)$ can be described by a DGIFS with finitely many equations, see Figure \ref{fig:Kiesswetter}.
% \[\begin{cases}
%     B_0=F_1(B_0)\\
%     B_1=F_1(B_0)\cup F_2(B_0)\cup F_3(B_1)\\
%     B_2=F_0(B_3)\\
%     B_3=F_1(B_0)\cup F_0(B_2)
% \end{cases}.\]
\begin{figure}[!htb]
    \centering
    \includegraphics[width=0.5\linewidth]{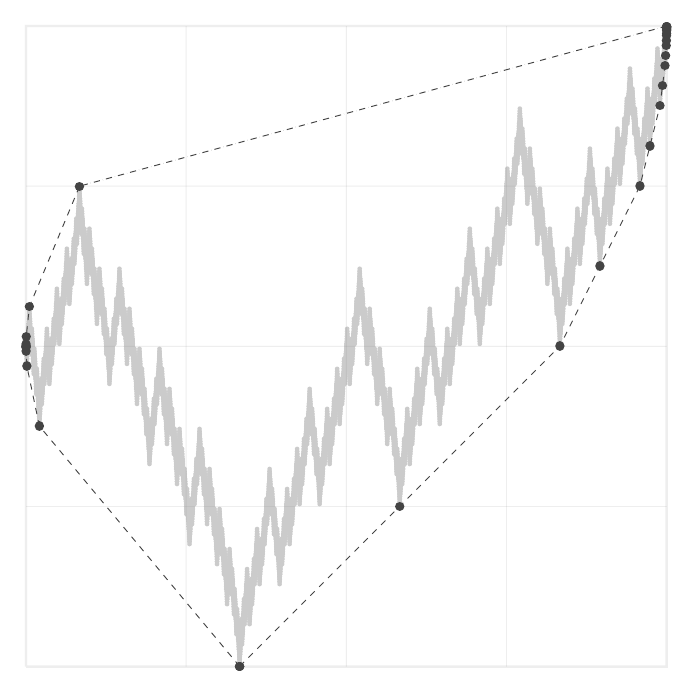}
    % \hspace{1cm}
    \begin{tikzpicture}
    
    \node [draw, circle] (270) at (270:2) (s0) {\small $B_0$};
    \node [draw, circle] (0) at (0:2) (s1) {\small $B_1$};
    \node [draw, circle] (120) at (120:2) (s2) {\small $B_2$};
    \node [draw, circle] (210) at (210:2) (s3) {\small $B_3$};
        
    \path (s0) edge [loop below] node {$1$} (s0);
    \path (s1) edge [loop above] node {$3$} (s1);
    \path[->] (s1) edge [bend right, above] node {$1$} (s0);
    \path[->] (s1) edge [bend left, below] node {$2$} (s0);
    \path[->] (s2) edge [bend right, left] node {$0$} (s3);
    \path[->] (s3) edge [bend right, right] node {$0$} (s2);
    \path[->] (s3) edge [midway, below] node {$1$} (s0);
    
\end{tikzpicture}
    \caption{The Kiesswetter curve and its extreme points inside the rectangle $[0,1]\times[-1,1]$. The vertical lines of the grid are at $x=k/4$ for $0\leq k\leq 4$ while the horizontal lines are at $y=j/2$ for $-2\leq j\leq2$. Each infinite path on the directed graph on the right represents an extreme point.}
    \label{fig:Kiesswetter}
\end{figure}
In \cite{LiMiao14} Li and Miao showed how it is possible to generalize Kiesswetter's function and even provided a representation via non-homogeneous affine IFS. Can the set of extreme points for this functions be described with a DGIFS as above? 

\subsection{Takagi curves}
Another example of a simple continuous nowhere differentiable function was given by Takagi \cite{T03} in 1903. It is defined on the unit interval as \[T(x)=\sum_{n\geq0}\frac{1}{2^n}\phi(2^nx)\] where $\phi(x)$ is the distance of $x$ to the nearest integer. There exists various ways to extend its definition, but the one we consider here is \[T_\eta(x)=\sum_{n\geq0}\eta^n\phi(2^nx)\] for $\eta\in(0.25,1)$. The graph $T_\eta$ can also be realized (see for example \cite{ABK25}) as the attractor of the affine IFS $\{F_0,F_1\}$ where \[F_0(\vec{x})=\begin{pmatrix}1/2&0\\1/2&\eta\end{pmatrix}\vec{x},\quad F_1(\vec{x})=\begin{pmatrix}1/2&0\\-1/2&\eta\end{pmatrix}\vec{x}+\begin{pmatrix}1/2\\1/2\end{pmatrix}.\]
Given that the global extrema for $T_\eta$ have a simple description when $0.5\leq \eta<1$, the sofic system for the extreme points is straightforward, see Figure \ref{fig:TakagiCurves}. We have found that the DGIFS when $0.25<\eta<0.5$ is not constant for each value of $\eta$ and not easily determined.

We remark that the generalized Takagi curve $T_\eta$ is actually defined for non zero $-1<\eta<1$, however the cases $\eta\leq0.25$ turn out to be more difficult than expected and, thus, are left for further investigation.

Also worth mentioning is that the attractor of the IFS $\{F_0,F_1\}$ is not geometrically similar to that of the IFS $\{G_0,G_1\}$ where the contracting matrix of $G_j$ is the Jordan form of that in $F_j$.
\begin{figure}
    \centering
    \includegraphics[scale=0.7]{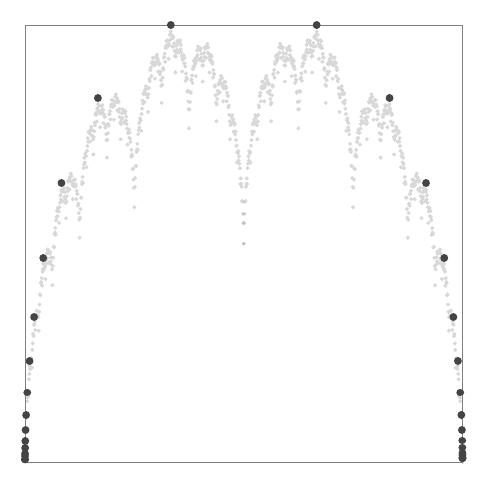}%
    \includegraphics[scale=0.7]{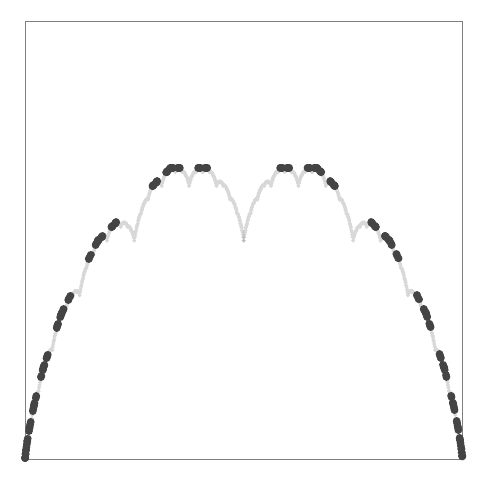}
    
    \begin{tikzpicture}
    
    \node [draw, circle] (135) at (135:3) (s0) {\small $B_0$};
    \node [draw, circle] (45) at (45:3) (s1) {\small $B_1$};
    \node [draw, circle] (180) at (180:3) (s2) {\small $B_2$};
    \node [draw, circle] (0) at (0:3) (s3) {\small $B_3$};
        
    \path[->] (s0) edge [bend left,midway, above] node {$0$} (s1);
    \path[->] (s1) edge [bend left,midway, below] node {$1$} (s0);
    \path[->] (s2) edge [midway, left] node {$0$} (s0);
    \path[->] (s2) edge [loop left, left] node {$0$} (s2);
    \path[->] (s3) edge [midway, right] node {$1$} (s1);
    \path[->] (s3) edge [loop right, right] node {$1$} (s3);
    
\end{tikzpicture}
    \begin{tikzpicture}
    
    \node [draw, circle] (135) at (135:3) (s0) {\small $B_0$};
    \node [draw, circle] (90) at (90:3) (s1) {\small $B_1$};
    \node [draw, circle] (45) at (45:3) (s2) {\small $B_2$};
    \node [draw, circle] (180) at (180:3) (s3) {\small $B_3$};
    \node [draw, circle] (0) at (0:3) (s4) {\small $B_4$};
    
    \path[->] (s0) edge [bend left,midway, above] node {$0$} (s1);
    \path[->] (s1) edge [bend left,midway, below] node {$1$} (s0);
    \path[->] (s1) edge [bend left,midway, above] node {$0$} (s2);
    \path[->] (s2) edge [bend left,midway, below] node {$1$} (s1);
    \path[->] (s3) edge [left] node {$0$} (s0);
    \path[->] (s3) edge [loop left, left] node {$0$} (s3);
    \path[->] (s4) edge [right] node {$1$} (s2);
    \path[->] (s4) edge [loop right, right] node {$1$} (s4);
    
\end{tikzpicture}
    \caption{The top shows the attractor of $T_{\eta}$ in the unit square with its extreme points for $\eta=2/3$, on the left, and $\eta=1/2$, on the right. The bottom row shows the respective DGIFS for $\ext(A)$.}
    \label{fig:TakagiCurves}
\end{figure}

\subsection{Generalized PU-sets}
Here we consider the so-called Przytycki--Urba\'nski sets, introduced in \cite{PU89}, and their generalizations. The PU-sets are the limit sets of a one-parameter family of homogeneous self-affine IFS $\{F_0,F_1\}$ where
\[ F_0(\vec{x})=\begin{pmatrix}
    \mu&0\\0&1/2
\end{pmatrix}\vec{x},\quad F_1(\vec{x})=\begin{pmatrix}
    \mu&0\\0&1/2
\end{pmatrix}\vec{x}+\begin{pmatrix}
    1-\mu\\1/2
\end{pmatrix}\]
with $\mu\in(1/2,1)$. The attractor $A$ lives in the unit square and its extreme points can be found by applying Theorem \ref{thm:mainrealcollinear}:
\[\begin{cases}
    B_0=F_0(B_0)\\
    B_1=F_0(B_0)\cup F_1(B_1)\\
    B_2=F_0(B_2)\cup F_1(B_3)\\
    B_3=F_1(B_3)
\end{cases}.\]
An example is shown in Figure \ref{fig:PUsets} where $\mu=2/3$.

The generalized PU-sets, studied in \cite{BU90,PW94,N01}, are the attractors of a four-parameters family of self-affine IFS $\{F_0,F_1\}$ where
\[ F_0(\vec{x})=\begin{pmatrix}
    \mu_1&0\\0&\eta_1
\end{pmatrix}\vec{x},\quad F_1(\vec{x})=\begin{pmatrix}
    \mu_2&0\\0&\eta_2
\end{pmatrix}\vec{x}+\begin{pmatrix}
    1-\mu_2\\1-\eta_2
\end{pmatrix}\] where $0<\eta_j<\mu_j<1$ for $j=0,1$ and also $\mu_1+\mu_2>1$, $\eta_1+\eta_2\leq1$. The attractor $A$ for this IFS also lives in the unit square.
\begin{figure}[!htb]
    \centering
    \includegraphics[width=0.385\linewidth]{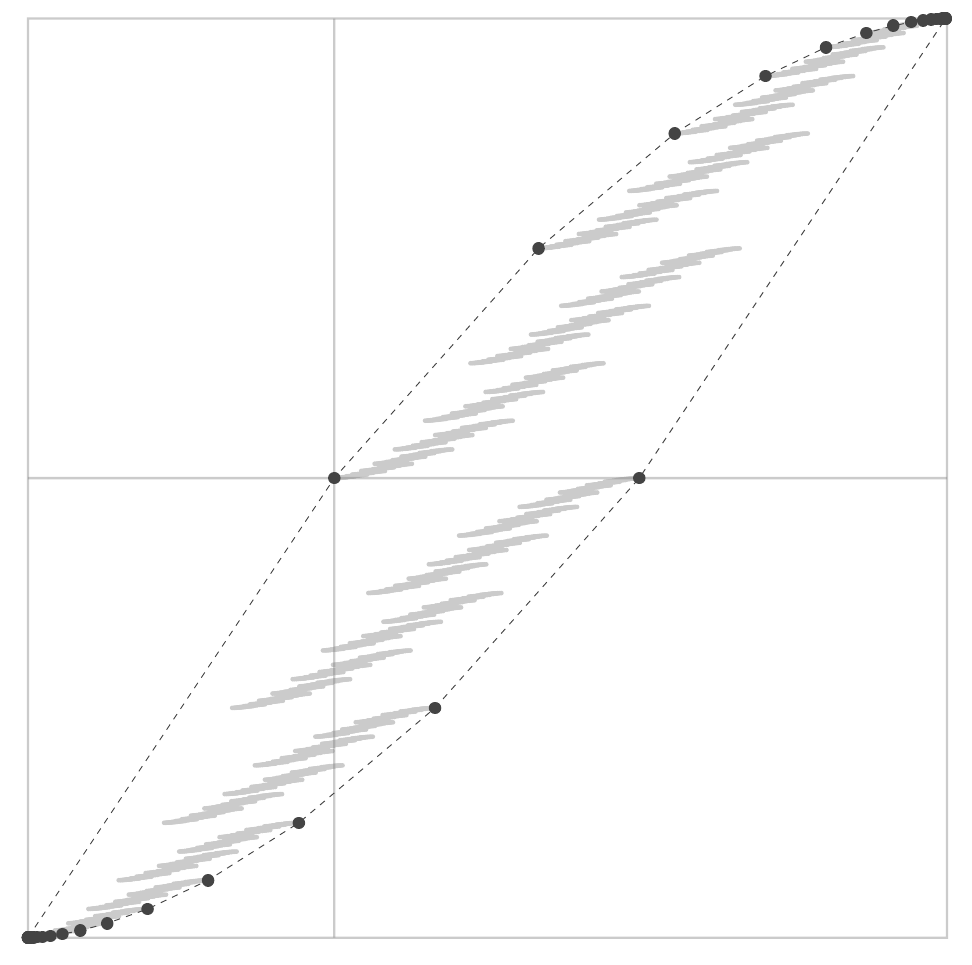}
    % \hspace{1cm}
    \includegraphics[width=0.385\linewidth]{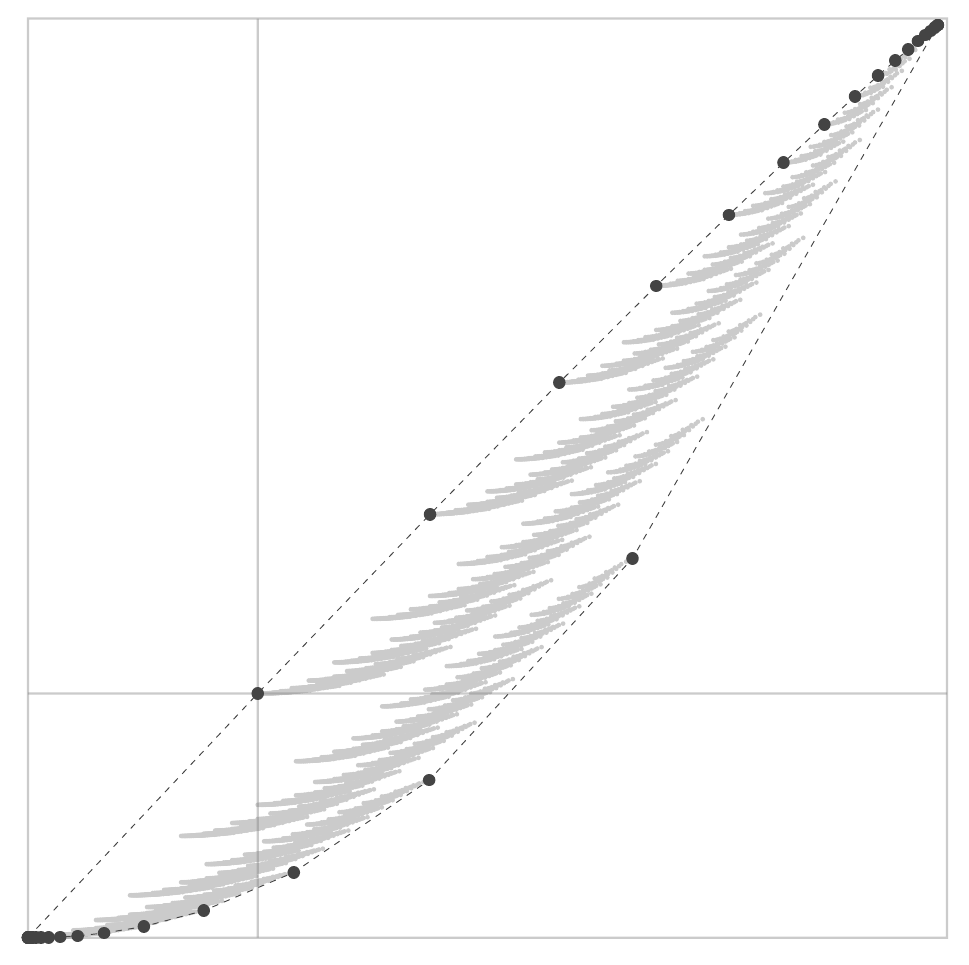}
    \caption{A PU-set and a generalized PU-set with their extreme points highlighted. Both attractors live in the unit square. The vertical line in the left picture is at $x=1-\mu$ and the horizontal line is at $y=1/2$. For the right picture the lines are at $x=1-\mu_2$ and $y=1-\eta_2$.}
    \label{fig:PUsets}
\end{figure}
Despite being non-homogeneous, the set of extreme points of $A$ is described by the same DGIFS used for the PU-sets. The attractor on the right of Figure \ref{fig:PUsets} is obtained by setting values $\mu_1=2/3$, $\mu_2=3/4$, $\eta_1=\mu_1-1/4$, and $\eta_2=\mu_2-1/64$. 

\newpage
\bibliographystyle{plainurl}
\bibliography{references}

@article{ABK25,
    title={Slices of the {T}akagi function},
    author={Anttila, Roope and B{\'a}r{\'a}ny, Bal{\'a}zs and K{\"a}enm{\"a}ki, Antti},
    journal ={Ergodic Theory Dynam. Systems},
    fjournal ={Ergodic Theory and Dynamical Systems},
    volume={44},
    number={9},
    pages={2361--2398},
    year={2024},
	zblnumber={1568.28009},
    MRNUMBER ={4802758},
    doi={10.1017/etds.2023.117},
    url={https://doi.org/10.1017/etds.2023.117}
}

@article{B89a,
    title={Self-similar sets {I} {T}opological {M}arkov chains and mixed self-similar sets},
    author={Bandt, Cristoph},
    journal={Math. Nachr.},
    fjournal={Mathematische Nachrichten},
    volume={142},
    number={},
    pages={107--123},
    year={1989},
	zblnumber={0707.28004},
    MRNUMBER ={1017373},
    doi={10.1002/mana.19891420107},
    url={https://doi.org/10.1002/mana.19891420107}
}

@article{B89b,
    title={Self-similar sets {III} {C}onstructions with sofic systems},
    author={Bandt, Cristoph},
    journal={Monatsh. Math.},
    fjournal={Monatshefte f{\"u}r Mathematik},
    volume={108},
    number={2},
    pages={89--102},
    year={1989},
    publisher={Springer-Verlag},
	zblnumber={0712.58039},
    MRNUMBER ={1026611},
    doi={10.1007/BF01308664},
    url={https://doi.org/10.1007/BF01308664}
}

@article{BG94,
    title={Classification of Self-Affine Lattice Tilings},
    author={Bandt, Cristoph and Gelbrich, G{\"o}tz},
    journal={J. London Math. Soc. (2)},
    fjournal={Journal of the London Mathematical Society. Second Series},
    volume={50},
    number={3},
    pages={581--593},
    year={1994},
	zblnumber={0820.52012},
    MRNUMBER ={1299459},
    doi={10.1112/jlms/50.3.581},
    url={https://doi.org/10.1112/jlms/50.3.581}
}

@article{BH89,
    title={A {M}andelbrot set whose boundary is piecewise smooth},
    author={Barnsley, Michael F. and Hardin, Doug P.},
    journal={Trans. Amer. Math. Soc.},
    fjournal={Transactions American Mathematical Society},
    volume={315},
    number={2},
    pages={641--659},
    year={1989},
    MRNUMBER ={1011232},
    doi={10.1090/S0002-9947-1989-1011232-6},
    url={https://doi.org/10.1090/S0002-9947-1989-1011232-6}
}

@phdthesis{B84,
    author ={Bedford, Timothy J.},
    title ={Crinkly curves, {M}arkov partitions, and dimension},
    school ={Warwick University},
    year ={1984}
}

@article{BU90,
    title={The box and {H}ausdorff dimension of self-affine sets},
    author={Bedford, Tim and Urba{\'n}ski, Mariusz},
    journal ={Ergodic Theory Dynam. Systems},
    fjournal ={Ergodic Theory and Dynamical Systems},
    volume={10},
    number={4},
    pages={627--644},
    year={1990},
	zblnumber={0725.28006},
    MRNUMBER ={1091418},
    doi={10.1017/S0143385700005812},
    url={https://doi.org/10.1017/S0143385700005812}
}

@incollection{B92,
    author={Berger, Marc A.},
    title={Random affine iterated function systems: mixing and encoding},
    booktitle={Diffusion processes and related problems in analysis,{V}ol.{II} ({C}harlotte,{NC}, 1990)},
    series={Progr. Probab.},
    volume={27},
    pages={315--346},
    publisher={Birkh{\"a}user Boston, Boston, MA},
    year={1992},
    MRNUMBER ={1187998}
}

@article{CW19,
    title={Extreme points in limit sets},
    author={Calegari, Danny and Walker, Alden},
    journal={Proc. Amer. Math. Soc.},
    fjournal={Proceedings American Mathematical Society},
    volume={147},
    number={09},
    pages={3829--3837},
    year={2019},
	zblnumber={1428.37043},
    MRNUMBER ={3993775},
    doi={10.1090/proc/14656},
    url={https://doi.org/10.1090/proc/14656}
}

@article{DT05,
    title={On convex hulls of self-similar sets},
    author={Davydkin, Ivan and Tetenov, Andrei},
    journal={Vestn. Novosib. Gos. Univ., Ser. Mat. Mekh. Inform.},
    fjournal={Vestnik Novosibirskogo Gosudarstvennogo Universiteta. Seriya Matematika, Mekhanika, Informatika},
    volume={5},
    number={2},
    pages={21--27},
    year={2005},
    language={Russian}
}

@article{D82,
    title={Recurrent sets},
    author={Dekking, F.M.},
    journal={Adv. in Math.},
    fjournal={Advances in Mathematics},
    volume={44},
    number={1},
    pages={78--104},
    year={1982},
    MRNUMBER ={654549},
    doi={10.1016/0001-8708(82)90066-4},
    url={https://doi.org/10.1016/0001-8708(82)90066-4}
}

@book{Edgar1990,
    title={Measure, topology, and fractal geometry},
    author={Edgar, Gerald A.},
    series={Undergraduate Texts in Mathematics},
    publisher={Springer-Verlag, New York},
    pages={xiv+230},
    MRNUMBER ={1065392},
    year={1990},
    doi={10.1007/978-1-4757-4134-6},
    url={https://doi.org/10.1007/978-1-4757-4134-6}
}

@article{E89,
    title={Kiesswetter's fractal has {H}ausdorff dimension {$3/2$}},
    author={Edgar, Gerald A.},
    journal={Real Anal. Exchange},
    fjournal={Real Analysis Exchange},
    volume={14},
    number={1},
    pages={215--223},
    MRNUMBER ={988367},
    year={1988/89}
}

@article{E13,
    title ={Generalized Self-Contacting Symmetric Fractal Trees},
    author ={Espigul{\'e}, Bernat},
    journal ={Symmetry Culture and Science},
    volume ={21},
    number ={1--4},
    pages ={333--351},
    year ={2013}
}

@article{EJS24,
    title ={Collinear Fractals and {B}andt's Conjecture},
    author ={Espigul{\'e}, Bernat and Juher, David and Salda{\~n}a, Joan},
    journal ={Fractal and Fractional},
    volume ={8},
    number ={12},
    pages ={},
    year ={2024},
    doi={10.3390/fractalfract8120725},
    url={https://doi.org/10.3390/fractalfract8120725}
}

@article{G87,
    title={Complex bases and fractal similarity},
    author={Gilbert, William J.},
    journal={Ann. Sci. Math. Qu\'{e}bec},
    fjournal={Annales des Sciences Math\'{e}matiques du Qu\'{e}bec},
    volume={11},
    number={1},
    pages={65--77},
    year={1987},
    MRNUMBER={912163},
    zblnumber={0633.10008}
}

@inproceedings{G08,
  title={Simultaneous and hybrid beta-encodings},
  author={G{\"u}nt{\"u}rk, Sinan C.},
  booktitle={Annual Conference on Information Sciences and Systems},
  year={2008},
  url={https://api.semanticscholar.org/CorpusID:8905747}
}

@article{HS16,
    title={Two-dimensional self-affine sets with interior points, and the set of uniqueness},
    author={Hare, Kevin G. and Sidorov, Nikita},
    journal={Nonlinearity},
    fjournal={Nonlinearity},
    volume={29},
    number={1},
    pages={1--26},
    year={2016},
	zblnumber={1334.28019},
    MRNUMBER ={3460748},
    doi={10.1088/0951-7715/29/1/1},
    url={https://doi.org/10.1088/0951-7715/29/1/1}
}

@article{HS17,
    title={On a family of self-affine sets: Topology, uniqueness, simultaneous expansions},
    author={Hare, Kevin G. and Sidorov, Nikita},
    journal ={Ergodic Theory Dynam. Systems},
    fjournal ={Ergodic Theory and Dynamical Systems},
    volume={37},
    number={1},
    pages={193--227},
    year={2017},
	zblnumber={1378.37021},
    MRNUMBER ={3590500},
    doi={10.1017/etds.2015.41},
    url={https://doi.org/10.1017/etds.2015.41}
}

@article{H81,
  title={Fractals and self similarity},
  author={Hutchinson, John E},
  journal={Indiana Univ. Math. J.},
  fjournal={Indiana University Mathematics Journal},
  volume={30},
  number={5},
  pages={713--747},
  year={1981},
  zblnumber={0598.28011},
  MRNUMBER ={625600},
  doi={10.1512/iumj.1981.30.30055},
  url={https://doi.org/10.1512/iumj.1981.30.30055}
}

@article{K66,
    title={Ein enfaches{B}eispiel f{\"u}r eine Funktion, welche{\"u}berall stetig und nicht differenzierbar ist},
    author={Kiesswetter, Karl},
    journal={Math.-Phys. Semesterber.},
    fjournal={Mathematisch-physikalische Semesterberichte},
    volume={13},
    pages={216--221},
    year={1966},
    MRNUMBER ={202936},
    language={German}
}

@article{K04,
    title={Sur une courbe continue sans tangente obtenue par une construction g{\'e}om{\'e}trique {\'e}l{\'e}mentaire},
    author={Koch, Helge von},
    journal={Ark. Mat. Astron. Fys.},
    fjournal={Arkiv f{\"o}r Matematik, Astronomi och Fysik},
    volume={1},
    pages={681-702},
    year={1904}
}

@article{L38,
    title={Les courbes planes ou gauches et les surfaces compos{\'e}es de parties semblabes au tout},
    author={L{\'e}vy, Paul},
    journal={J.{\'E}c. Polytech., Math.},
    fjournal={Journal de l'{\'E}cole Polytechnique},
    volume={81},
    pages={227--247,249--291},
    year={1938}
}

@article{LiMiao14,
    title ={Generalized {K}iesswetter’s Functions},
    author ={Li, Delong and Miao, Jie},
    journal ={Real Anal. Exchange},
    fjournal ={Real Analysis Exchange},
    volume ={40},
    number ={1},
    pages ={141--156},
    year ={2014/15},
    MRNUMBER ={3365395},
    url ={http://projecteuclid.org/euclid.rae/1435759200}
}

@book{Mandelbrot83,
    title={The Fractal Geometry of Nature},
    author={Mandelbrot, Benoit B.},
    publisher={W.H. Freeman and Co.},
    year={1983} 
}

@article{MF99,
    author={Mandelbrot, Benoit B. and Frame, Michael},
    title={The Canopy and Shortest Path in a Self-Contacting Fractal Tree},
    journal={Math. Intelligencer},
    fjournal={The Mathematical Intelligencer},
    volume={21},
    pages={18--27},
    year={1999},
	zblnumber={1052.28500},
    MRNUMBER ={1684366},
    doi={10.1007/BF03024842},
    url={https://doi.org/10.1007/BF03024842}
}

@article{MW88,
    title ={Hausdorff dimension in graph directed constructions},
    author ={Mauldin, R.D. and Williams, S.C.},
    journal ={Trans. Amer. Math. Soc.},
    fjournal ={Transactions of the American Mathematical Society},
    volume ={309},
    number ={2},
    pages ={811--829},
    year ={1988},
    MRNUMBER ={961615},
    doi ={10.1090/S0002-9947-1988-0961615-4},
    url ={https://doi.org/10.1090/S0002-9947-1988-0961615-4}
}

@article{M84,
    title={The {H}ausdorff dimension of general {S}ierpi{\'n}ski},
    author={McMullen, Curt},
    journal={Nagoya Math. J.},
    fjournal={Nagoya Mathematical Journal},
    volume={96},
    pages={1--9},
    year={1984},
	zblnumber={0539.28003},
    MRNUMBER ={771063},
    doi={10.1017/S0027763000021085},
    url={https://doi.org/10.1017/S0027763000021085}
}

@article{N01,
    title={Properties of Some Overlapping Self-Similar and Some Self-Affine Measures},
    author={Neunh{\"a}userer, J{\"o}rg},
    journal={Acta Math. Hungar.},
    fjournal={Acta Mathematica Hungarica},
    volume={92}, 
    number={1-2}, 
    pages={143--161},
    year={2001},
	zblnumber={1002.28011},
    MRNUMBER ={1924256},
    doi={10.1023/A:1013716430425},
    url={https://doi.org/10.1023/A:1013716430425}
}

@article{PW94,
    title={The dimensions of some self-affine limit sets in the plane and hyperbolic sets},
    author={Pollicott, Mark and Weiss, Howard},
    journal={J. Statist. Phys.},
    fjournal={Journal of Statistical Physics},
    volume={77},
    number={3-4},
    pages={841--866},
    year={1994},
	zblnumber={0840.58027},
    MRNUMBER ={1301464},
    doi={10.1007/BF02179463},
    url={https://doi.org/10.1007/BF02179463}
}

@article{PU89,
    title={On the {H}ausdorff dimension of some fractal sets},
    author={Przytycki, Feliks and Urba{\'n}ski, Mariusz},
    journal={Studia Math.},
    fjournal={Polska Akademia Nauk. Instytut Matematyczny. Studia Mathematica},
    volume={93},
    number={2},
    pages={155--186},
    year={1989},
	zblnumber={0691.58029},
    MRNUMBER ={1002918},
    doi={10.4064/sm-93-2-155-186},
    url={https://doi.org/10.4064/sm-93-2-155-186}
}

@mastersthesis{R25,
    title={Almost Regular Closedness of the Connectedness Locus for Pairs of Affine Maps on {$\mathbb{R}^2$}},
    author={Rosler, Omer},
    year={2025},
    school={Bar-Ilan University},
    url={https://arxiv.org/abs/2411.06953}
}

@article{ST15,
    title ={On the {F}ibonacci–{M}andelbrot set},
    author ={Sirvent, V{\'i}ctor F. and Thuswaldner, J{\"o}rg M.},
    journal ={Indag. Math. (N.S.)},
    fjournal={Koninklijke Nederlandse Akademie van Wetenschappen. Indagationes Mathematicae. New Series},
    volume ={26},
    number ={1},
    pages ={174-190},
    year ={2015},
	zblnumber={1317.37053},
    MRNUMBER ={3281699},
    doi ={10.1016/j.indag.2014.09.004},
    url ={https://doi.org/10.1016/j.indag.2014.09.004}
}

@article{S94,
    author={Solomyak, Boris},
    title={Conjugates of beta-numbers and the zero-free domain for a class of analytic functions},
    journal={Proc. London Math. Soc. (3)},
    fjournal={Proceedings of the London Mathematical Society. Third Series},
    volume={68},
    year={1994},
    issue ={3},
    pages ={477--498},
	zblnumber={0820.30007},
    MRNUMBER ={1262305},
    doi ={10.1112/plms/s3-68.3.477},
    url ={https://doi.org/10.1112/plms/s3-68.3.477}
}

@article{T03,
    title={A simple example of the continuous function without derivative},
    author={Takagi, Teiji},
    journal={Proc. Phys.-Math. Soc. Japan},
    fjournal={Proceedings of the Physico-Mathematical Society of Japan},
    volume={1},
    series={II},
    pages={176--177},
    year={1903},
    doi={10.1007/978-4-431-54995-6_3},
    url={https://doi.org/10.1007/978-4-431-54995-6_3}
}

@article{TR18,
    title={Convex Hulls of {S}ierpi{\'n}ski relatives},
    author={Taylor, Tara D. and Rowley, S},
    journal={Fractals},
    fjournal={Fractals. Complex Geometry, Patterns, and Scaling in Nature and Society},
    volume={26},
    number={6},
    pages={1850098, 15},
    year={2018},
	zblnumber={1433.28030},
    MRNUMBER ={3900103},
    doi={10.1142/S0218348X18500986},
    url={https://doi.org/10.1142/S0218348X18500986}
}

@unpublished{TD02,
    title={Hausdorff dimension of the set of extreme points of a self-similar set},
    author={Tetenov, Andrei and Davydkin, Ivan},
    note={arXiV:math/0202215},
    url={https://arxiv.org/abs/math/0202215},
    year={2002}
}

@article{T17,
    title={Generalized {$\beta$}-transformations  and  the  entropy  of  unimodal  maps},
    author={Thompson, Daniel J.},
    journal={Comment. Math. Helv.},
    fjournal={Commentarii Mathematici Helvetici},
    volume={92},
    number={4},
    pages={777--800},
    year={2017},
	zblnumber={1380.37080},
    MRNUMBER ={3718487},
    doi={10.4171/CMH/424},
    url={https://doi.org/10.4171/CMH/424}
}

@article{V18,
    author={Vass, J{\'o}zsef},
    title={On the Exact Convex Hull of {IFS} fractals},
    journal={Fractals},
    fjournal={Fractals. Complex Geometry, Patterns, and Scaling in Nature and Society},
    volume={26},
    issue={01},
    pages={1850002, 21},
    year={2018},
	zblnumber={1432.28012},
    MRNUMBER ={3766077},
    doi={10.1142/S0218348X18500020},
    url={https://doi.org/10.1142/S0218348X18500020}
}

\end{document}